\newcommand{\Rea}{\operatorname{Re}}
\newcommand{\Ima}{\operatorname{Im}}
\begin{document}

\title{Weighted Relaxation for Multigrid Reduction in Time}

\author[1]{Masumi Sugiyama*}

\author[2]{Jacob B. Schroder}

\author[3]{Ben S. Southworth}

\author[4]{Stephanie Friedhoff}

\authormark{Sugiyama \textsc{et al.}}

\address[1]{\orgdiv{Dept. of Mathematics}, \orgname{University of Tennessee at Chattanooga}, \orgaddress{\state{Tennessee}, \country{USA}}}

\address[2]{\orgdiv{Dept. of Mathematics and Statistics}, \orgname{University of New Mexico}, \orgaddress{\state{New Mexico}, \country{USA}}}

\address[3]{\orgdiv{Theoretical Division}, \orgname{Los Alamos National Laboratory}, \orgaddress{\state{New Mexico}, \country{USA}}}

\address[4]{\orgdiv{Dept. of Mathematics}, \orgname{University of Wuppertal}, \orgaddress{\country{Germany}}}

\corres{*Masumi Sugiyama, Dept. of Mathematics, University of Tennessee at Chattanooga, Chattanooga, TN 37403, USA. \email{hgh889@mocs.utc.edu}}


\abstract[Summary]{Based on current trends in computer architectures, faster compute speeds must come from increased parallelism rather than increased clock speeds, which are currently stagnate. This situation has created the well-known bottleneck for sequential time-integration, where each individual time-value (i.e., time-step) is computed sequentially. One approach to alleviate this and achieve parallelism in time is with multigrid. In this work, we consider multigrid-reduction-in-time (MGRIT), a multilevel method applied to the time dimension that computes multiple time-steps in parallel. Like all multigrid methods, MGRIT relies on the complementary relationship between relaxation on a fine-grid and a correction from the coarse grid to solve the problem. All current MGRIT implementations are based on unweighted-Jacobi relaxation; here we introduce the concept of weighted relaxation to MGRIT. We derive new convergence bounds for weighted relaxation, and use this analysis to guide the selection of relaxation weights.
Numerical results then demonstrate that non-unitary relaxation weights consistently yield faster convergence rates and lower iteration counts for MGRIT when compared with unweighted relaxation. In most cases, weighted relaxation yields a 10\%--20\% saving in iterations.  For A-stable integration schemes, results also illustrate that under-relaxation can \emph{restore convergence} in some cases where unweighted relaxation is not convergent.
}

\keywords{parallel-in-time, multigrid, multigrid-reduction-in-time, weighted relaxation, polynomial relaxation}


\maketitle

\section{Introduction}
\label{sec:intro}

Based on current trends in computer architectures, faster compute speeds must come from increased parallelism rather than increased clock speeds, which are stagnate. This situation has created a bottleneck for sequential time-integration \cite{Ga2015, Fa2014, OnSc2019}, where each individual time-value (i.e., time-step) is computed sequentially. 
One approach to alleviate this is through parallelism in the time dimension, which goes back at least to Nievergelt \cite{Ni1964} in 1964.
For an introduction to parallel-in-time methods, see the review papers \cite{Ga2015, OnSc2019}, which give an overview of various approaches such as multiple shooting, waveform relaxation, domain decomposition, multigrid, and direct parallel-in-time methods.

In this work, we choose multigrid for parallelism in time for the same reasons that multigrid is often the method of choice for solving spatial problems \cite{TrOo2001,BrHeMc2000}, i.e., a well-designed multigrid solver is an optimal method.  In particular, we consider the multigrid-reduction-in-time (MGRIT) method \cite{Fa2014}, which has been applied in numerous settings, e.g., for nonlinear parabolic problems \cite{Fa2016}, compressible and incompressible Navier-Stokes \cite{FalKatz2014,ChrGaGuFaSc2019b}, elasticity \cite{HeNoRoSc2017}, power-grid systems \cite{LeFaWoTo2016,GuFaToWoSc2020}, eddy current \cite{FrHaKuSc2018,BoFrHaSc2020}, machine learning \cite{GuRuScCyGa2018,CyGuSc2019}, and more \cite{OnSc2019}.
However, we note that there exist other powerful multigrid-like parallel-in-time methods such as the popular parareal 
\cite{LiMaTu2001} and parallel full approximation scheme in space and time (PFASST) \cite{MiWi2008,EmMi2012,Mi2010} methods. Parareal can be viewed as a two-level multigrid reduction method that coarsens in time \cite{gander2007analysis}. PFASST can also be viewed as a multigrid 
method in time that utilizes a deferred correction strategy to compute multiple time-steps in parallel \cite{bolten2017multigrid}.   Unlike parareal, MGRIT is a full multilevel method applied to the time dimension, which allows for optimal scaling with respect to problem size.  In contrast, for the two-level case, the coarsest temporal grid typically grows with problem size, yielding a potentially fast, but non-optimal method.

Like all multigrid methods, MGRIT relies on the complementary relationship between relaxation on a fine-grid, typically unweighted (block) Jacobi, and a correction from the coarse grid to solve the problem. 
In this work, we extend the use of weighted relaxation in multigrid \cite{AdBrHuTu2003,BaFaKoYa2011,TrOo2001,BrHeMc2000} to MGRIT, and analyze and select effective relaxation weights. With an appropriate choice of weight, MGRIT with weighted relaxation consistently offers faster convergence when compared with standard (unweighted) MGRIT, at almost no additional computational work\footnote{Only one additional vector addition is performed.}. \Cref{sec:methods} introduces a framework for weighted relaxation in MGRIT, and derives a new convergence analysis for linear two-grid MGRIT with degree-1 weighted-Jacobi relaxation. The theory is then verified with simple numerical examples in \Cref{sec:verify}, and the utility of weighted relaxation is demonstrated on more complex problems in \Cref{sec:results}, including a 2D advection-diffusion problem and a 2D nonlinear eddy current problem. The new method consistently offers a 10--20\% savings in iterations over standard unweighted MGRIT, and in some cases, (particularly A-stable integration schemes) yields convergence several times faster. Additional experiments are provided in the Supplemental Materials \Cref{app2}, exploring the effects of level-dependent relaxation weights for multilevel solvers and degree-2 weighted-Jacobi.

\section{Multigrid-Reduction-in-Time (MGRIT) and weighted-Jacobi}
\label{sec:methods}
\subsection{Two-level MGRIT method}

This section derives the error-propagation operator for two-level linear MGRIT with weighted relaxation. Then, two-level convergence bounds are derived as a function of relaxation weight, providing insight on choosing the weight in practice. Although MGRIT uses full approximation storage (FAS) nonlinear multigrid cycling \cite{Brandt_1977} to solve nonlinear problems, the linear two-grid setting makes analysis more tractable (e.g.,  \cite{FriMac2015, Do2016, HeSoNoRoFaSc2018, So2019, FrSo2020}), and MGRIT behavior for linear problems is often indicative of MGRIT behavior for related nonlinear problems \cite{Do2016}. Thus, consider a linear system of ordinary differential equations (ODEs) with $N_x$ spatial degrees of freedom,
\begin{equation}\label{ODE}
\frac{d \mathbf{u}}{dt} = G\mathbf{u}(t) + \mathbf{g}(t), \hspace{10pt} \mathbf{u}(0)=\mathbf{g}_0, \hspace{10pt} t \in [0, T],
\end{equation}
where $\mathbf{u} \in \mathbb{R}^{N_x}$ and $G\in\mathbb{R}^{N_x\times N_x}$ is a linear operator in space. For simplicity, define a uniform temporal grid as $t_j = j \delta_t$, for $j=0,1,..,N_t-1$ where $N_t$ refers to the number of points in time, with constant spacing $\delta_t = T / (N_t - 1) > 0$. Let $\mathbf{u}_{j}$ be an approximation to $\mathbf{u}(t_j)$ for $j=1,2,..,N_t-1$ and $\mathbf{u}_0 = \mathbf{u}(0)$. Then, a general one-step time discretization for (\ref{ODE}) is defined as
\begin{equation}\label{time step eq}
\begin{aligned}
\mathbf{u}_0 &= \mathbf{g}_0,\\
\mathbf{u}_j &= \Phi \mathbf{u}_{j-1} + \mathbf{g}_j, \hspace{10pt} j=1,2,...,N_t-1 ,
\end{aligned}
\end{equation}
{where $\Phi$ is a one-step integration operator and $\mathbf{g}_j = \mathbf{g}(t_j)$.}
The solution to \eqref{time step eq} for all time points is equivalent to solving the system of equations
\begin{equation}\label{time step eq matrix}
\mathbf{Au} \coloneqq 
\begin{bmatrix} 
I &  &  &  & \\
-\Phi & I &  &  &\\
& \ddots &  \ddots&  & \\
& & -\Phi & I
\end{bmatrix}
\begin{bmatrix}
\mathbf{u_0}\\ \mathbf{u_1} \\ \vdots \\ \mathbf{u_{N_t-1}}
\end{bmatrix}=
\begin{bmatrix}
\mathbf{g_0}\\ \mathbf{g_1} \\ \vdots \\ \mathbf{g_{N_t-1}}
\end{bmatrix} = \mathbf{g}.
\end{equation}
While sequential time-stepping solves \eqref{time step eq matrix} directly with forward-substitution, MGRIT solves (\ref{time step eq matrix}) iteratively by combining a block Jacobi relaxation with error corrections computed on a coarse-grid. Let the coarse temporal grid be $T_i = i \delta_T$, for $i=0,1,...,N_{T-1}$ and $N_T = (N_t-1)/m+1$, which corresponds to a positive integer coarsening factor $m$ and constant spacing $\delta_T = m \delta_t$. (Without loss of generality, we assume that $N_t-1$ divides evenly by $m$ in this description.) The original grid of points $\{t_j\}$ is then partitioned into C-points given by the set of coarse grid points $\{T_i\}$, and F-points given by $\{t_i\} \setminus \{T_i\}$ (see Figure \ref{mgrit_grid}). These C-points then induce a new coarser time-grid, with equivalent time-propagation problem
\begin{equation}\label{time step eq coarse}
\begin{aligned}
\mathbf{u}_0 &= \mathbf{g}_0 \\
\mathbf{u}_{km} &= \Phi^m \mathbf{u}_{(k-1)m} + \tilde{\mathbf{g}}_{km}, \hspace{10pt} k=1,2,...,N_T-1,
\end{aligned}
\end{equation}
where $\tilde{\mathbf{g}}_{km} = \mathbf{g}_{km} + \Phi \mathbf{g}_{km-1} + \cdots + \Phi^{m-1} \mathbf{g}_{(k-1)m+1}$.
The solution to \eqref{time step eq coarse} is equivalent to solving the coarse system of equations
\begin{equation}\label{coarse time step eq matrix}
\mathbf{A_{\triangle} u_{\triangle}} \coloneqq  
\begin{bmatrix} 
I &  &  &  & \\
-\Phi^m & I &  &  &\\
& \ddots &  \ddots&  & \\
& & -\Phi^m & I
\end{bmatrix}
\begin{bmatrix}
\mathbf{u}_0\\ \mathbf{u}_m \\ \vdots \\ \mathbf{u}_{(N_T-1) m}
\end{bmatrix}=
\begin{bmatrix}
\mathbf{g}_0\\ \tilde{\mathbf{g}}_m \\ \vdots \\ \tilde{\mathbf{g}}_{(N_T-1) m}
\end{bmatrix} = \mathbf{g}_{\triangle},
\end{equation}
\normalsize
 where $\mathbf{A_{\triangle}}$ has $N_T$ block rows and block columns. Unfortunately, solving equation (\ref{coarse time step eq matrix}) is as expensive as solving equation (\ref{time step eq matrix}) because of the $\Phi^m$ operator.  Thus, $\Phi^m$ is usually replaced with a cheap approximation $\Phi_{\triangle}$, which in turn induces  a new operator on the coarse-grid, $\mathbf{B}_{\triangle} \approx \mathbf{A}_{\triangle}$.  The operator $\mathbf{B}_\triangle$ has the exact same structure as $\mathbf{A}_\triangle$, only the $\Phi^m$ has been replaced by $\Phi_\triangle$.

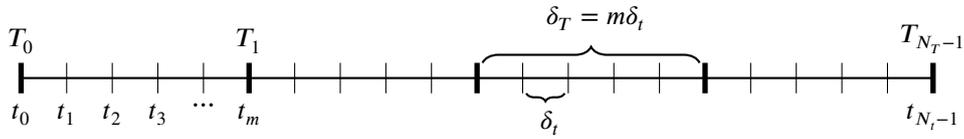
\begin{figure}[t]
	\centering
	\begin{tikzpicture}[xscale = 1.2]
	\draw [thick] (-4.5,0) -- (5.5,0);
	
	\draw[line width=2pt] (-4.5,-0.2) -- (-4.5,0.2);
	\draw (-4.0,-0.2) -- (-4.0,0.2);
	\draw (-3.5,-0.2) -- (-3.5,0.2);
	\draw (-3.0,-0.2) -- (-3.0,0.2);
	\draw (-2.5,-0.2) -- (-2.5,0.2);
	\draw[line width=2pt] (-2.0,-0.2) -- (-2.0,0.2);
	\draw (-1.5,-0.2) -- (-1.5,0.2);
	\draw (-1.0,-0.2) -- (-1.0,0.2); 
	\draw (-0.5,-0.2) -- (-0.5,0.2); 
	\draw (0,-0.2) -- (0,0.2); 
	\draw[line width=2pt] (0.5,-0.2) -- (0.5,0.2); 
	\draw (1.0,-0.2) -- (1.0,0.2);
	\draw (1.5,-0.2) -- (1.5,0.2); 
	\draw (2.0,-0.2) -- (2.0,0.2);
	\draw (2.5,-0.2) -- (2.5,0.2); 
	\draw[line width=2pt] (3.0,-0.2) -- (3.0,0.2); 
	\draw (3.5,-0.2) -- (3.5,0.2); 
	\draw (4.0,-0.2) -- (4.0,0.2); 
	\draw (4.5,-0.2) -- (4.5,0.2); 
	\draw (5.0,-0.2) -- (5.0,0.2); 
	\draw[line width=2pt] (5.5, -0.2) -- (5.5, 0.2);
	
	\node [below] at (-4.5, -0.2) {$t_0$};
	\node [above] at (-4.5, 0.2) {$T_0$};
	\node [below] at (-4.0, -0.2) {$t_1$};
	\node [below] at (-3.5, -0.2) {$t_2$};
	\node [below] at (-3.0, -0.2) {$t_3$};
	\node [below] at (-2.5, -0.2) {$\cdots$};
	\node [below] at (-2.0, -0.2) {$t_m$};
	\node [above] at (-2.0, 0.2) {$T_1$};
	\node [below] at (5.5, -0.2) {$t_{N_t-1}$};
	\node [above] at (5.5, 0.2) {$T_{N_T-1}$};
	
	\draw [thick,decorate,decoration={brace,amplitude=6pt,raise=0pt}] (0.56, 0.2) -- (2.94, 0.2);
	\node [above] at (1.8, 0.5) {$\delta_T = m \delta_t$};
	\draw [thick,decorate,decoration={brace,amplitude=3pt,raise=0pt, mirror}] (1.04, -0.2) -- (1.46, -0.2);
	\node [below] at (1.3, -0.3) {$\delta_t$};
	\end{tikzpicture}
	\caption{Uniform fine and coarse time-grid corresponding to coarsening factor $m$. The $T_i$ are the C-points and form the coarse-grid, while the small hashmarks are F-points.  Together, the F- and C-points form the fine-grid $\{ t_j \}$.}
	\label{mgrit_grid}
\end{figure}

\begin{figure}[t] 
	\centering 
	\begin{subfigure}[h]{0.4\linewidth}
		\begin{tikzpicture}[L/.style={very thick, draw=#1, looseness=2, ->}]
		\draw [thick] (-4.5,0) -- (-0.5,0);
		\foreach \x in {1,2,...,12}{\coordinate (x\x) at (\x,0);}
		
		\draw[line width=2pt] (-4.5,-0.2) -- (-4.5,0.2);
		\draw (-4.0,-0.2) -- (-4.0,0.2);
		\draw (-3.5,-0.2) -- (-3.5,0.2);
		\draw (-3.0,-0.2) -- (-3.0,0.2);
		\draw[line width=2pt] (-2.5,-0.2) -- (-2.5,0.2);
		\draw (-2.0,-0.2) -- (-2.0,0.2);
		\draw (-1.5,-0.2) -- (-1.5,0.2);
		\draw (-1.0,-0.2) -- (-1.0,0.2); 
		\draw [line width=2pt] (-0.5,-0.2) -- (-0.5,0.2);  
		
		\draw[L=orange] (-4.5,0) to [out=90, in=90] (-4.0,0);
		\draw[L=orange] (-4.0,0) to [out=90, in=90] (-3.5,0);
		\draw[L=orange] (-3.5,0) to [out=90, in=90] (-3.0,0);
		
		\draw[L=orange] (-2.5,0) to [out=90, in=90] (-2.0,0);
		\draw[L=orange] (-2.0,0) to [out=90, in=90] (-1.5,0);
		\draw[L=orange] (-1.5,0) to [out=90, in=90] (-1.0,0);
		
	    \node [below] at (-4.5, -0.2) {$T_{i-1}$};
	    \node [below] at (-2.5, -0.2) {$T_{i}$};
	    \node [below] at (-0.5, -0.2) {$T_{i+1}$};
	    
		\end{tikzpicture}
		\caption{\normalsize F-relaxation}
	\end{subfigure}
	\begin{subfigure}[h]{0.4\linewidth}
		\begin{tikzpicture}[
		L/.style={very thick, draw=#1, looseness=2, ->}
		]
		\draw [thick] (-4.5,0) -- (-0.5,0);
		\foreach \x in {1,2,...,12}{\coordinate (x\x) at (\x,0);}
		
		\draw[line width=2pt] (-4.5,-0.2) -- (-4.5,0.2);
		\draw (-4.0,-0.2) -- (-4.0,0.2);
		\draw (-3.5,-0.2) -- (-3.5,0.2);
		\draw (-3.0,-0.2) -- (-3.0,0.2);
		\draw[line width=2pt] (-2.5,-0.2) -- (-2.5,0.2);
		\draw (-2.0,-0.2) -- (-2.0,0.2);
		\draw (-1.5,-0.2) -- (-1.5,0.2);
		\draw (-1.0,-0.2) -- (-1.0,0.2); 
		\draw [line width=2pt] (-0.5,-0.2) -- (-0.5,0.2);  
		
		\draw[L=blue] (-3.0,0) to [out=90, in=90] (-2.5,0);
		\draw[L=blue] (-1.0,0) to [out=90, in=90] (-0.5,0);
		
	    \node [below] at (-4.5, -0.2) {$T_{i-1}$};
	    \node [below] at (-2.5, -0.2) {$T_{i}$};
	    \node [below] at (-0.5, -0.2) {$T_{i+1}$};
	    
		\end{tikzpicture}
		\caption{\normalsize C-relaxation}
	\end{subfigure}
	\caption{Schematic view of the action of (a) F-relaxation and (b) C-relaxation with a coarsening factor of $m=4$.}
	\label{F- and C-relaxation}
\end{figure}

With the partition of F- and C-points as depicted in Figure \ref{mgrit_grid}, there are two fundamental types of relaxation: F- and C-relaxation. F-relaxation updates the F-point values based on the C-point values, i.e., one F-sweep updates each interval of F-points with 
\begin{equation}
 \mathbf{u}_i = \Phi \mathbf{u}_{i-1} + \mathbf{g}_i \quad \mbox{for } i = (km+1) \dots ((k+1)m-1),
 \label{eqn:Frelax}
 \end{equation}
and $k$ is the F-interval index from $0$ to $N_T-2$.
Similarly, C-relaxation updates each C-point value based on the preceding F-point value, i.e., the index $i$ becomes $km$ in equation \eqref{eqn:Frelax}. Each interval of F-points $(T_{i-1}, T_{i})$ for $i = 1,..., N_T-1$ can be updated simultaneously in parallel, and each C-point can also be updated simultaneously in parallel. Figure \ref{F- and C-relaxation} illustrates the action of these relaxations in parallel.
One application of F-relaxation followed by a C-relaxation updates each $\mathbf{u}_{km}$ based on $\mathbf{u}_{(k-1)m}$, which computes $\Phi^m$ applied to $\mathbf{u}_{(k-1)m}$ for $k = 1,..., N_T-1$. This FC-sweep corresponds to a block Jacobi iteration on the coarse-grid with $\mathbf{A}_{\triangle}$. Letting $k$ denote the current relaxation iteration, this block Jacobi scheme can be written as
\begin{equation} \label{block jacobi F- and C- relaxation}
\begin{aligned}
{\mathbf{u}_{\triangle}^{(k + 1)}} & {= \mathbf{u}^{(k)}_{\triangle} + D_\triangle^{-1} (\mathbf{g}_{\triangle} - \mathbf{A}_{\triangle} \mathbf{u}^{(k)}_{\triangle})} \\
&= \begin{bmatrix} \mathbf{u}_0^{(k)}\\ \mathbf{u}_m^{(k)} 
\\ \vdots \\ \mathbf{u}_{(N_T-1) m}^{(k)} \end{bmatrix} + D_\triangle^{-1} 
\begin{bmatrix}
\mathbf{g}_0 - \mathbf{u}_0^{(k)} \\ \tilde{\mathbf{g}}_m + \Phi^m \mathbf{u}_0^{(k)} - \mathbf{u}_m^{(k)} \\ \vdots \\ \tilde{\mathbf{g}}_{(N_T-1) m} + \Phi^m \mathbf{u}_{(N_T - 2)m}^{(k)} - \mathbf{u}_{(N_T-1) m}^{(k)}
\end{bmatrix}
&= \begin{bmatrix} \mathbf{g}_0 \\ \Phi^m \mathbf{u}_0^{(k)} + \tilde{\mathbf{g}}_m \\ \vdots \\ \Phi^m \mathbf{u}_{(N_T - 2)m}^{(k)} + \tilde{\mathbf{g}}_{(N_T-1) m}\end{bmatrix},
\end{aligned}
\end{equation}
where $D_\triangle$ is the diagonal of $A_\Delta$ and equal to the identity.
The MGRIT algorithm performs either an F-relaxation or an FCF-relaxation, which consists of the initial F-relaxation, a C-relaxation, and a second F-relaxation.

\subsubsection{Weighted-Jacobi variant of FCF-relaxation}
Here we introduce a \emph{weighted} Jacobi relaxation to the MGRIT framework. Weighted-Jacobi relaxation with weight $\omega_C > 0$ applied to \eqref{block jacobi F- and C- relaxation} takes the form
\begin{align}  \label{weighted jacobi eq}
\begin{split}
\mathbf{u}_\triangle^{(k + 1)} &= \omega_C \{(I - D_\triangle^{-1}A_\triangle) \mathbf{u}_\triangle^{(k)} + D_\triangle^{-1} \mathbf{g}_\triangle\} + (1 - \omega_C) \mathbf{u}_\triangle^{(k)}, \hspace{10pt} k = 0,1,2,... 
\end{split}
\end{align}
We use $\omega_C$ to denote the weight in \eqref{weighted jacobi eq}, because it will be shown that \eqref{weighted jacobi eq} is equivalent to applying a relaxation weight only during the C-relaxation step of an FC-sweep. Since the standard MGRIT FC-sweep corresponds to the block Jacobi method \eqref{block jacobi F- and C- relaxation}, it
is thus natural to instead consider the weighted variant \eqref{weighted jacobi eq} inside of MGRIT.  

In general, weighted relaxation has improved convergence for spatial multigrid methods applied to a variety of problems {\cite{AdBrHuTu2003,BaFaKoYa2011,TrOo2001,BrHeMc2000}}, and so the remainder of this paper explores the application of weighted-Jacobi \eqref{weighted jacobi eq} in MGRIT.
Regarding notation, the subscript $_F$ indicates the relaxation weight $\omega_F$ for F-relaxation, and subscript $_C$ indicates the weight $\omega_C$ for C-relaxation. Degree-two weighted-Jacobi will refer to two successive iterations of \eqref{weighted jacobi eq}, possibly with different weights.  The weight for the first C-relaxation{, for example,} is denoted $\omega_C$, while the weight for the second is denoted $\omega_{CC}$.
It is called degree-two, because the resulting update to
$\mathbf{u}_\triangle$ corresponds to a degree-two polynomial in $A_\triangle$.

\subsection{Convergence estimate for MGRIT with weighted-Jacobi relaxation}

We now extend existing linear two-level MGRIT convergence bounds \cite{Do2016,So2019} to account for the effects of weighted-Jacobi relaxation. 

\subsubsection{MGRIT error propagator for unweighted FCF-relaxation}

Let the fine-grid operator $\mathbf{A}$ in (\ref{time step eq matrix}) be reordered so that F-points appear first and C-points second.  Then by using the subscripts $F$ and $C$ to indicate the two sets of points, we have
$$\mathbf{A}=
\begin{bmatrix}
A_{FF} & A_{FC} \\
A_{CF} & A_{CC}\\
\end{bmatrix}.$$
Define the ideal interpolation operator $P\;$\footnote{$P$ is \textit{ideal} because if an exact solution is available at C-points, then
multiplication by $P$ plus a right-hand-side contribution will yield the exact solution at all C- and F-points.},
restriction by injection $R_I$, and a map to F-points $S$, respectively, as
$$ P \coloneqq \begin{bmatrix} -A_{FF}^{-1}A_{FC} \\
I_C \end{bmatrix}, \hspace{10pt}
R_I \coloneqq \begin{bmatrix} 0 & I_C \end{bmatrix}, \hspace{10pt}
S \coloneqq \begin{bmatrix}
I_F\\ 0 \end{bmatrix}.$$
{From \cite{Fa2014}, the two-level error propagator for linear MGRIT with unweighted FCF-relaxation is then given by}
\begin{equation} \label{two level error FCF}
(I - P B_{\triangle}^{-1} R_I A)P(I - A_{\triangle})R_I = P(I - B_{\triangle}^{-1} A_{\triangle}) (I - A_{\triangle})R_I.
\end{equation}

\subsubsection{Two-level error propagator for weighted C-relaxation}
Weighted-Jacobi for F-relaxation using the same structure as (\ref{weighted jacobi eq}) can be written as
\begin{align}
\begin{split}
\mathbf{u}^{k+1} &= \omega_F \{(I-S(S^TAS)^{-1}S^TA)\mathbf{u}^k + D^{-1}\mathbf{g}\} + (1-\omega_F) \mathbf{u}^k \\
&= (I - \omega_F S(S^TAS)^{-1}S^TA)\mathbf{u}^k + \omega_F D^{-1}\mathbf{g},
\end{split}
\end{align}
where the first term (without $\mathbf{g}$) is the error propagator.  
Similarly, weighted-Jacobi for C-relaxation can be written as
\begin{equation} \label{C-realx}
\mathbf{u}^{k+1} = (I - \omega_C R^T_I(R_IAR^T_I)^{-1}R_IA) \mathbf{u}^k + D^{-1}\mathbf{g},
\end{equation}
where the first term (without $\mathbf{g}$) is the error propagator.
Hence, the error propagator of FCF-relaxation with weighted-Jacobi is given by
the product of F-, C-, and F-relaxation error-propagators:
\begin{equation}
   (I-\omega_{FF} S(S^TAS)^{-1}S^TA)(I-\omega_C R^T_I(R_IAR^T_I)^{-1}R_IA)(I-\omega_F S(S^TAS)^{-1}S^TA),
\end{equation}
where $\omega_{FF}$ denotes the weight for the second F-relaxation. Despite the above generality, moving forward we only consider $\omega_F = \omega_{FF} = 1.0$.
If $\omega_{FF} \neq 1$, then MGRIT would no longer be an approximate reduction method.
In other words, if the exact solution were given at C-points, the final F-relax using $\omega_{FF}\neq 1$ would no longer be guaranteed to yield the exact solution at F-points.
We note that experiments also indicated $\omega_{FF} = 1$ performs best on model heat and advection problems. 
Similarly, letting $\omega_F \neq 1$ would restrict an inexact residual to the coarse grid problem, deviating from the principle of reduction methods. 

Thus, with this simplification, the error propagator for C-weighted FCF-relaxation takes the following block $2 \times 2$ form:
\begin{subequations}
\begin{align} \label{eq:error propagator}
&(I-S(S^TAS)^{-1}S^TA)(I-\omega_C R^T_I(R_IAR^T_I)^{-1}R_IA)(I- S(S^TAS)^{-1}S^TA)\\
& = \Bigg(I - \begin{bmatrix}
I_f & A_{ff}^{-1}A_{fc} \\
0 & 0\\
\end{bmatrix}\Bigg)
\Bigg(I - \omega_C \begin{bmatrix}
0 & 0 \\
A_{cc}^{-1}A_{cf} & I_c\\
\end{bmatrix}\Bigg)
\Bigg(I - \begin{bmatrix}
I_f & A_{ff}^{-1}A_{fc} \\
0 & 0\\
\end{bmatrix}\Bigg)\\
& = \begin{bmatrix}
0 & -A_{ff}^{-1}A_{fc} \{I_c - \omega_C A_{cc}^{-1}(A_{cc} - A_{cf}A_{ff}^{-1}A_{fc})\}\\
0 & I_c - \omega_C A_{cc}^{-1} (A_{cc} - A_{cf}A_{ff}^{-1}A_{fc})
\end{bmatrix}\\
& = \begin{bmatrix}
- A_{ff}^{-1}A_{fc}\\
I_c
\end{bmatrix}
\begin{bmatrix}
I_c - \omega_C A_{cc}^{-1} (A_{cc} - A_{cf}A_{ff}^{-1}A_{fc})
\end{bmatrix}
\begin{bmatrix}
0 & I_c
\end{bmatrix}\\
& = P(I - \omega_C \mathbf{A}_{\triangle})R_I.
\label{eq:wFCF}
\end{align}
\end{subequations}

Next, we take the two-level MGRIT error propagator with FCF-relaxation (\ref{two level error FCF}) and 
substitute in the new weighted variant \eqref{eq:wFCF} to yield the following
two-level error propagator for FCF-relaxation with weighted-C-Jacobi,
\begin{equation} \label{eq:wJ_ep}
(I-PB^{-1}_{\triangle}R_IA)P(I - \omega_C \mathbf{A}_{\triangle})R_I = P(I-\mathbf{B}_{\triangle}^{-1}\mathbf{A}_{\triangle})(I - \omega_C \mathbf{A}_{\triangle})R_I .
\end{equation}

Lastly, to derive our convergence bound, we follow the convention from \cite{Do2016, HeSoNoRoFaSc2018}
and examine the error propagator's effect only at C-points (i.e., drop the $P$ and $R_I$ from equation \eqref{eq:wJ_ep}).  This simplification is typically made with the 
following motivation.  If the solution at C-points is exact, then the final application of $P$ in 
\eqref{eq:wJ_ep} will produce the exact solution at F-points, i.e., a zero residual.  With this 
simplification, we denote the error propagator \eqref{eq:wJ_ep} at only C-points as $E_{\triangle, \hspace{1pt} \omega_C}^{FCF}$, which takes the form

\begin{subequations} \label{New Bound Error Matrix}
\begin{align} 
E_{\triangle, \hspace{1pt} \omega_C}^{FCF} &= (I-\mathbf{B}_{\triangle}^{-1}\mathbf{A}_{\triangle})(I - \omega_C \mathbf{A}_{\triangle})\\ 
&= \begin{bmatrix}
0 & \\
(1 - \omega_C)(\Phi^m - \Phi_{\triangle})  & 0 \\
(1 - \omega_C) \Phi_{\triangle} (\Phi^m - \Phi_{\triangle})  + \omega_C (\Phi^m - \Phi_{\triangle}) \Phi^m & (1 - \omega_C)(\Phi^m - \Phi_{\triangle}) & 0 \\
\vdots & \vdots & \ddots & 0 \\
(1 - \omega_C) \Phi_{\triangle}^{N_T - 1} (\Phi^m - \Phi_{\triangle}) + \omega_C \Phi_{\triangle}^{N_T - 2} (\Phi^m - \Phi_{\triangle}) \Phi^m & \cdots & \cdots & (1 - \omega_C) (\Phi^m - \Phi_{\triangle}) & 0 \\
\end{bmatrix}.  
\end{align}
\end{subequations}

\subsubsection{Two-grid eigenvalue convergence analysis}
\label{sec:proof}

To guarantee convergence, ideally we bound \eqref{New Bound Error Matrix} in some norm (e.g., see \cite{So2019}). However, working in a norm can be difficult; thus we take the more tractable approach of considering convergence for individual eigenvectors \cite{Do2016,So2019}. Thus, assume that $\Phi$ and $\Phi_\Delta$ have the same set of eigenvectors, $\{v_\gamma\}$, as occurs when the same spatial discretization is used on the coarse and fine grid in time, and let $\{\lambda_{\gamma}\}$ be the eigenvalues of  $\Phi$ and $\{\mu_{\gamma}\}$ be the eigenvalues of $\Phi_{\triangle}$. For instance, let $\kappa_{\gamma} \geq 0$ denote an eigenvalue of the linear operator $G$ in (\ref{ODE}); if backward Euler is used on the coarse and fine grid, we have
\begin{equation} \label{eig2} 
\lambda_{\gamma} = (1 - h_t \kappa_{\gamma})^{-1}, \mbox{ and } 
\mu_{\gamma} = (1 - m h_t \kappa_{\gamma})^{-1} \hspace{10pt} \text{for} \hspace{5pt} \gamma = 1,2,...,N_x.
\end{equation}
Define $\widetilde{U}$ as a block-diagonal operator, with diagonal blocks given by the eigenvector matrix for $\Phi$ and $\Phi_\Delta$. Following the discussion of Section 5 in \cite{So2019}, we can apply $\widetilde{U}$ to the left and $\widetilde{U}^{-1}$ to the right of \eqref{New Bound Error Matrix}. The resulting operator is then block diagonal, with diagonal blocks corresponding to a single pair of eigenvalues $\{\lambda_{\gamma}, \mu_\gamma\}$, and takes the following form: 
\begin{align}\label{eq:E-eig}
\widetilde{E}_{\triangle, \hspace{1pt} \omega_C}^{FCF} =
\begin{bmatrix} 0 \\
		(1-\omega_C)(\lambda_\gamma^m - \mu_\gamma) & 0 \\
		(1-\omega_C)\mu_\gamma(\lambda_\gamma^m - \mu_\gamma) + \omega_C(\lambda_\gamma^m - \mu_\gamma)\lambda_\gamma^m & (1-\omega_C)(\lambda_\gamma^k - \mu_\gamma) & 0 \\
		\vdots & \ddots & \ddots & \ddots \\
		(1-\omega_C)\mu_\gamma^{N_T-1}(\lambda_\gamma^m - \mu_\gamma) + \omega_C\mu_\gamma^{N_T-2}(\lambda_\gamma^m - \mu_\gamma)\lambda_\gamma^m & \hdots & \hdots & (1-\omega_C)(\lambda_\gamma^m - \mu_\gamma) & 0 \\ \end{bmatrix}.
\end{align}
Following the analysis in \cite{Do2016,So2019}, we can provide bounds on \eqref{New Bound Error Matrix} in a certain eigenvector-induced $(\widetilde{U}\widetilde{U}^*)^{-1}$-norm by bounding \eqref{eq:E-eig} in norm and taking the maximum over $\gamma$ (note, if the spatial matrix is SPD, $\widetilde{U}$ is unitary, and the $(\widetilde{U}\widetilde{U}^*)^{-1}$-norm is simply the $\ell^2$-norm).
Note that \eqref{eq:E-eig} is a Toeplitz matrix, with asymptotic generating function
\begin{align*}
\mathcal{F}_\gamma(x) & \coloneqq (\lambda_\gamma^m - \mu_\gamma)\left[
	(1-\omega_C)\sum_{\ell=1}^{\infty} \mu_\gamma^{\ell-1}e^{i\ell x} + 
	\omega_C\lambda_\gamma^m \sum_{\ell=2}^{\infty} \mu_\gamma^{\ell-2}e^{i\ell x}\right] \\
& = e^{ix}(\lambda_\gamma^m - \mu_\gamma)\left[
	(1-\omega_C)\sum_{\ell=0}^{\infty} (\mu_\gamma e^{i x})^{\ell} + 
	e^{ix}\omega_C\lambda_\gamma^m \sum_{\ell=0}^{\infty} (\mu_\gamma e^{i x})^{\ell}\right] \\
& = e^{ix} \frac{(\lambda_\gamma^m - \mu_\gamma)}{1 - e^{ix}\mu_\gamma}\left[1-\omega_C
	+ e^{ix}\omega_C\lambda_\gamma^m\right].
\end{align*}
Noting that $\mathcal{F}_\gamma(x)\in L^1[-\pi,\pi]$, from \cite{widom1989singular} (see also
\cite[Th. 2.1]{capizzano1999extreme}), we have that
\begin{align}\nonumber
\sigma_{max, \gamma}(\widetilde{E}_{\triangle, \hspace{1pt} \omega_C}^{FCF}) & \leq \max_{x\in[0,2\pi]} |\mathcal{F}_\gamma(x)| \\
& = \max_{x\in[0,2\pi]} \frac{|\lambda_\gamma^m - \mu_\gamma|}{|1 - e^{ix}\mu_\gamma|}
	|1-\omega_C + e^{ix}\omega_C\lambda_\gamma^m| \label{eq:bound}.
\end{align}
Taking the maximum over $\gamma$, corresponding to all (shared) eigenvectors of
$\Phi$ and $\Phi_\Delta$ yields the following final result.
\begin{theorem}\label{th:bound}
Assume that $\Phi$ and $\Phi_\Delta$ have the same set of eigenvectors, with eigenvalues
$\{\lambda_{\gamma}\}$ and $\{\mu_{\gamma}\}$, respectively, where $|\lambda_\gamma|,
|\mu_\gamma| < 1$ for all $\gamma\in[1,N_x]$. Let $\widetilde{U}$ denote a block-diagonal operator,
with diagonal blocks given by the eigenvector matrix of $\Phi$ and $\Phi_\Delta$. Then,
\begin{align} \label{sharper bd}
\|E_{\triangle, \hspace{1pt} \omega_C}^{FCF}\|_{(\widetilde{U}\widetilde{U}^*)^{-1}} \leq
    \max_\gamma \max_{x\in[0,2\pi]} \frac{|\lambda_\gamma^m - \mu_\gamma|}{|1 - e^{ix}\mu_\gamma|}
	|1-\omega_C + e^{ix}\omega_C\lambda_\gamma^m|.
\end{align}
\end{theorem}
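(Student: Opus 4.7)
The plan is to follow the roadmap already sketched in the text preceding the theorem: block-diagonalize the error propagator via $\widetilde{U}$, observe that each resulting block is a lower-triangular Toeplitz matrix in the eigenvalue pair $(\lambda_\gamma,\mu_\gamma)$, and bound its singular values by the $L^\infty$-norm of its generating symbol. Because $\widetilde{U}$ is block-diagonal with the shared eigenvector matrix of $\Phi$ and $\Phi_\Delta$ on each diagonal block, the similarity transform $\widetilde{U}^{-1} E_{\triangle,\omega_C}^{FCF} \widetilde{U}$, after an appropriate reordering of rows and columns, decouples into $N_x$ independent blocks, each of the form displayed in \eqref{eq:E-eig}. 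The spectral hypotheses $|\lambda_\gamma|, |\mu_\gamma| < 1$ will be essential for convergence of the geometric series used to derive the symbol and for ensuring $\mathcal{F}_\gamma \in L^1[-\pi,\pi]$.

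Next, for each fixed $\gamma$, I would read off the generating function of the Toeplitz block. The $\ell$-th subdiagonal carries the entry $(\lambda_\gamma^m-\mu_\gamma)[(1-\omega_C)\mu_\gamma^{\ell-1} + \omega_C\lambda_\gamma^m\mu_\gamma^{\ell-2}]$ (with the second term appearing only for $\ell\geq 2$). Summing $\sum_\ell c_\ell e^{i\ell x}$, splitting into the two geometric series, and collecting terms yields precisely the closed form
\[
\mathcal{F}_\gamma(x) = e^{ix}\,\frac{\lambda_\gamma^m - \mu_\gamma}{1 - e^{ix}\mu_\gamma}\,\bigl[1-\omega_C + e^{ix}\omega_C\lambda_\gamma^m\bigr]
\]
shown in the text. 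The convergence of both geometric series, and hence the validity of this closed form, rests on $|e^{ix}\mu_\gamma|=|\mu_\gamma|<1$.

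I would then invoke the Widom / Serra-Capizzano singular-value estimate for Toeplitz operators generated by an $L^1$ symbol \cite{widom1989singular,capizzano1999extreme}, which yields $\sigma_{\max,\gamma}\bigl(\widetilde{E}_{\triangle,\omega_C}^{FCF}\bigr) \leq \|\mathcal{F}_\gamma\|_{L^\infty}$. To combine the per-$\gamma$ estimates into the claimed global bound, I would use the fact that the $(\widetilde{U}\widetilde{U}^*)^{-1}$-norm of $E_{\triangle,\omega_C}^{FCF}$ equals the $\ell^2$-norm of $\widetilde{U}^{-1} E_{\triangle,\omega_C}^{FCF} \widetilde{U}$, and that the $\ell^2$-norm of a block-diagonal matrix is the maximum of the $\ell^2$-norms of its blocks. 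Taking the maximum over $\gamma$ of the Widom bound then gives exactly \eqref{sharper bd}.

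The main obstacle I expect is the norm-identification step: carefully tracking how the $(\widetilde{U}\widetilde{U}^*)^{-1}$-weighted norm interacts with the reordering that turns $\widetilde{U}^{-1} E_{\triangle,\omega_C}^{FCF} \widetilde{U}$ into a literal block-diagonal operator whose blocks match \eqref{eq:E-eig}. This piece is essentially inherited from the framework of Section 5 of \cite{So2019}, so I would cite that discussion rather than reproduce it. The derivation of $\mathcal{F}_\gamma$ and the application of the Toeplitz singular-value bound are then routine algebraic steps, and the final maximum over $\gamma$ is immediate.
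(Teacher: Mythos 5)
Your proposal is correct and follows essentially the same route as the paper: block-diagonalization by $\widetilde{U}$ into the per-eigenvalue Toeplitz blocks of \eqref{eq:E-eig}, derivation of the symbol $\mathcal{F}_\gamma(x)$ by summing the two geometric series (valid since $|\mu_\gamma|<1$), the Widom/Serra-Capizzano singular-value bound $\sigma_{\max,\gamma}\leq\max_x|\mathcal{F}_\gamma(x)|$, and the maximum over $\gamma$ under the $(\widetilde{U}\widetilde{U}^*)^{-1}$-norm identification borrowed from the cited framework. The paper's own proof is exactly this preceding discussion, so no further comparison is needed.
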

\begin{proof}
The proof follows from the above discussion.
\end{proof}
For fixed $\gamma$, a closed form for the maximum over $x$ in \eqref{sharper bd} to
allow for easier computation is provided in the Supplemental materials.

We numerically verify the convergence bound \eqref{sharper bd} in \Cref{sec: Numerial veri bound}
for model 1D heat and advection equations, respectively. In some cases, the bound is quite tight,
while for others the general behavior is right, but bounds are not exact. This is likely due to
\Cref{th:bound} providing an \emph{upper} bound on worst-case convergence; even if the upper
bound is tight (which \Cref{th:bound} is asymptotically in $N_T$), it is possible that better
convergence can be observed in practice, depending on the problem and right-hand side.


\begin{remark}
We also note that one can approximate the maximum over $x$ in \Cref{th:bound} by assuming
a fixed $x$ rotates $\lambda_\gamma$ and $\mu_\gamma$ to the real-axis. Experiments have indicated this to be a reasonable assumption for eigenvalues with dominant real-part, although less so for eigenvalues with large imaginary component. Nevertheless, it does yield a simpler measure to compute, and can be applied to weighted FCF- and FCFCF-relaxation (degree-two weighted-Jacobi), with approximate bounds
\begin{align} \label{bd FCFCF-relaxation}
\begin{split}
\|E_{\triangle, \hspace{1pt} \omega_C}^{FCF}\|_{(\widetilde{U}\widetilde{U}^*)^{-1}} & \lessapprox
\max_\gamma \frac{|\lambda_\gamma^m - \mu_\gamma|}{1 - |\mu_\gamma|}
|1-\omega_C + \omega_C|\lambda_\gamma^m||\textbf{}, \\
\|{E}_{\triangle, \hspace{1pt} \{\omega_C,\omega_{CC}\}}^{FCFCF}\|_{(\widetilde{U}\widetilde{U}^*)^{-1}} &\lessapprox
\max_\gamma \frac{|\lambda_\gamma^m - \mu_\gamma|}{1 - |\mu_\gamma|}
|1-\omega_C + \omega_C|\lambda_\gamma^m||\, |1-\omega_{CC} + \omega_{CC}|\lambda_\gamma^m||.
\end{split}
\end{align}
For the derivation of the FCFCF-bound, see Appendix \ref{app1}.
\end{remark}

\section{Verifying the Convergence Bound}
\label{sec:verify}
\subsection{Numerical verification of the convergence bound} \label{sec: Numerial veri bound}
We focus our verification tests on three model problems with the following spatial discretizations, the 1D heat equation (second-order central differencing in space), the 1D advection equation with purely imaginary
spatial eigenvalues (second-order central differencing in space), and the 1D advection equation with complex spatial eigenvalues (first-order upwinding in space).  In all cases, backward Euler is used in time.\footnote{For a complete description of these problems, see the Supplemental Materials for the heat equation in Section \ref{sec:results_heat},  the advection equation with purely imaginary spatial eigenvalues in Section \ref{sec:results_adv_imag}, and the advection equation with complex spatial eigenvalues in Section \ref{sec:1dadv_complex}.}  We choose these model problems because the theoretical motivation of equation \eqref{sharper bd} indicates that it is the character of the spatial eigenvalues and the time-stepping method that determine the convergence of MGRIT, i.e., not the dimensionality of the problem, the complexity of the governing PDE, or the nature of the forcing term and boundary conditions.  Thus, we choose these three representative cases, similar to \cite{Do2016, HeSoNoRoFaSc2018}.

We consider the 1D heat equation subject to an initial condition and homogeneous Dirichlet boundary conditions,
\begin{align}
\frac{\partial u}{\partial t} - \alpha \frac{\partial^2 u}{\partial x^2} &= f(x,t), \quad\alpha > 0, && x \in \Omega = [0, L],\hspace{10pt} t \in [0, T], \nonumber \\
u(x, 0) &= u_0(x), && x \in \Omega, \\
u(x, t) &= 0, && x \in \partial \Omega, \hspace{10pt} t \in [0, T].\nonumber
\end{align} For numerical experiments, we use the space-time domain $[0, 1]\times [0, 0.625]$, the diffusivity constant $\alpha = 1$, and the right-hand side $f(x,t) = \sin(\pi x) [\sin(t) - \pi^2 \cos(t)]$. Note that with these choices, the analytical solution is given by $u(x, t) = \sin(\pi x)\cos(t)$.
A random initial guess and a residual norm halting tolerance of $ 10^{-10} / \sqrt{h_x h_t}$ are used. 
Reported convergence rates are taken as an average over the last five MGRIT
iterations, where $\| r_k\|_2 / \| r_{k-1} \|_2$ is the convergence rate at iteration $k$ and $r_k$ is the residual from equation \eqref{time step eq matrix} at iteration $k$.
The combination of grid points in space $N_x$ and time $N_t$ are chosen so that 
$\frac{h_t}{h_x^2}  = 12.8$.  This value was chosen to be of moderate magnitude and 
consistent with other MGRIT literature, namely the work \cite{Do2016}.

We also consider the 1D advection equation with purely imaginary spatial eigenvalues, subject to an initial condition and periodic spatial boundary conditions,
\begin{align}
\frac{\partial u}{\partial t} - \alpha \frac{\partial u}{\partial x} &= 0, 
\quad \alpha > 0, && x \in \Omega = [0, L], \hspace{10pt} t \in [0, T], \nonumber \\
u(x, 0) &= u_0(x), && x \in \Omega, \\
u(0, t) &= u(L, t), && t \in [0, T].\nonumber
\end{align}
The space-time domain considered is $[0, 1]\times [0, 1]$, the velocity constant $\alpha = 1$, and the analytical solution 
$u(x, t) = e^{-25((x - t) - 0.5)^2}$.
The solution is chosen as a standard test problem that satisfies the spatially periodic boundary conditions.
A random initial guess and a residual norm halting tolerance of $ 10^{-8} / \sqrt{h_x h_t}$ are used.  The maximum allowed
iterations is set to $70$, because some cases will fail to quickly converge. 
Reported convergence rates are taken as $( \| r_k \|_2 / \| r_0 \|_2 )^{1/k}$ at the final iteration $k$.  The geometric average is used (as opposed to the heat equation case above) because the per iteration convergence rate here can vary significantly. The combination of grid points in space $N_x$ and time $N_t$ are chosen so that $\frac{h_t}{h_x} = 0.5$.

Figure \ref{fig:Heat new theoretical bound} (a) and Figure \ref{fig:AdvcC new theoretical bound} (a) depict the convergence bound (dashed line) and experimental convergence rates (solid line) against various relaxation weights $\omega_C$ for the 1D heat equation and the 1D advection equation with purely imaginary spatial eigenvalues, respectively. Figure \ref{fig:Heat new theoretical bound} (b) and Figure \ref{fig:AdvcC new theoretical bound} (b) show the iterations associated with the experimental convergence rates. For Figure \ref{fig:Heat new theoretical bound}, the theoretical bound is very tight and predicts the optimal $\omega_C$. For the advective case in Figure \ref{fig:AdvcC new theoretical bound}, the bound is predictive, but not quite sharp enough to predict the best weight. The results for the 1D advection equation with complex spatial eigenvalues are similar to the 1D advection equation with purely imaginary spatial eigenvalues and, thus, are omitted.

\begin{figure}[h!]
    \centering
    \begin{subfigure}[b]{0.4\textwidth}
    \includegraphics[width=\textwidth]{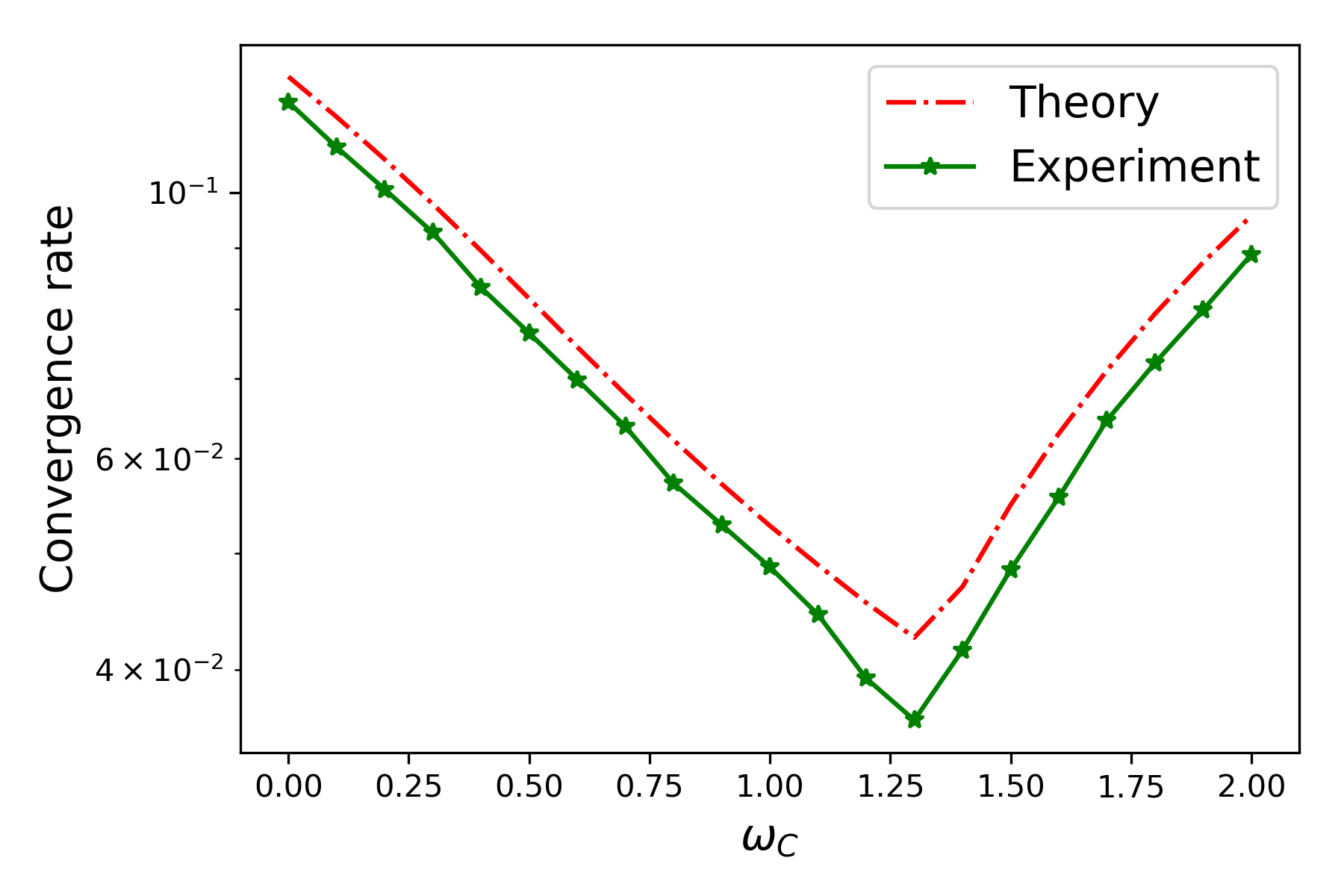}
    \caption{\normalsize Convergence Rate}
    \end{subfigure}
     \begin{subfigure}[b]{0.4\textwidth}
    \includegraphics[width=\textwidth]{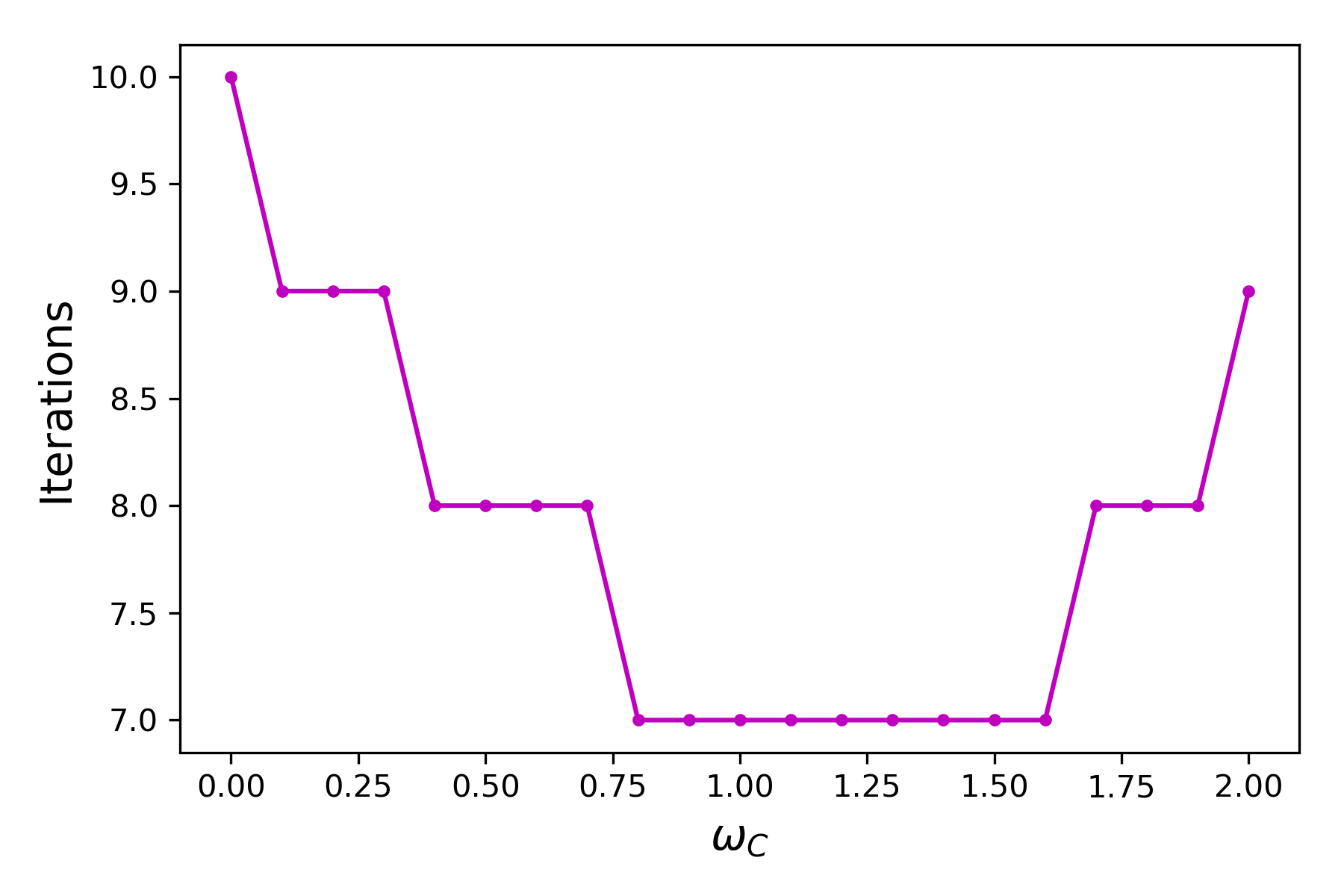}
    \caption{\normalsize Iterations}
    \end{subfigure}
    \caption{Two-level MGRIT theoretical bound (dashed line in left plot), experimental convergence rates (solid line in left plot), and iteration counts (right plot) as a function of relaxation weights $\omega_C$ for the one-dimensional heat equation, coarsening factor $m = 2$, and grid size $(N_x, N_t) = (291, 4097)$.}
    \label{fig:Heat new theoretical bound}
\end{figure}

\begin{figure}[h!]
    \centering
    \begin{subfigure}[b]{0.4\textwidth}
    \includegraphics[width=\textwidth]{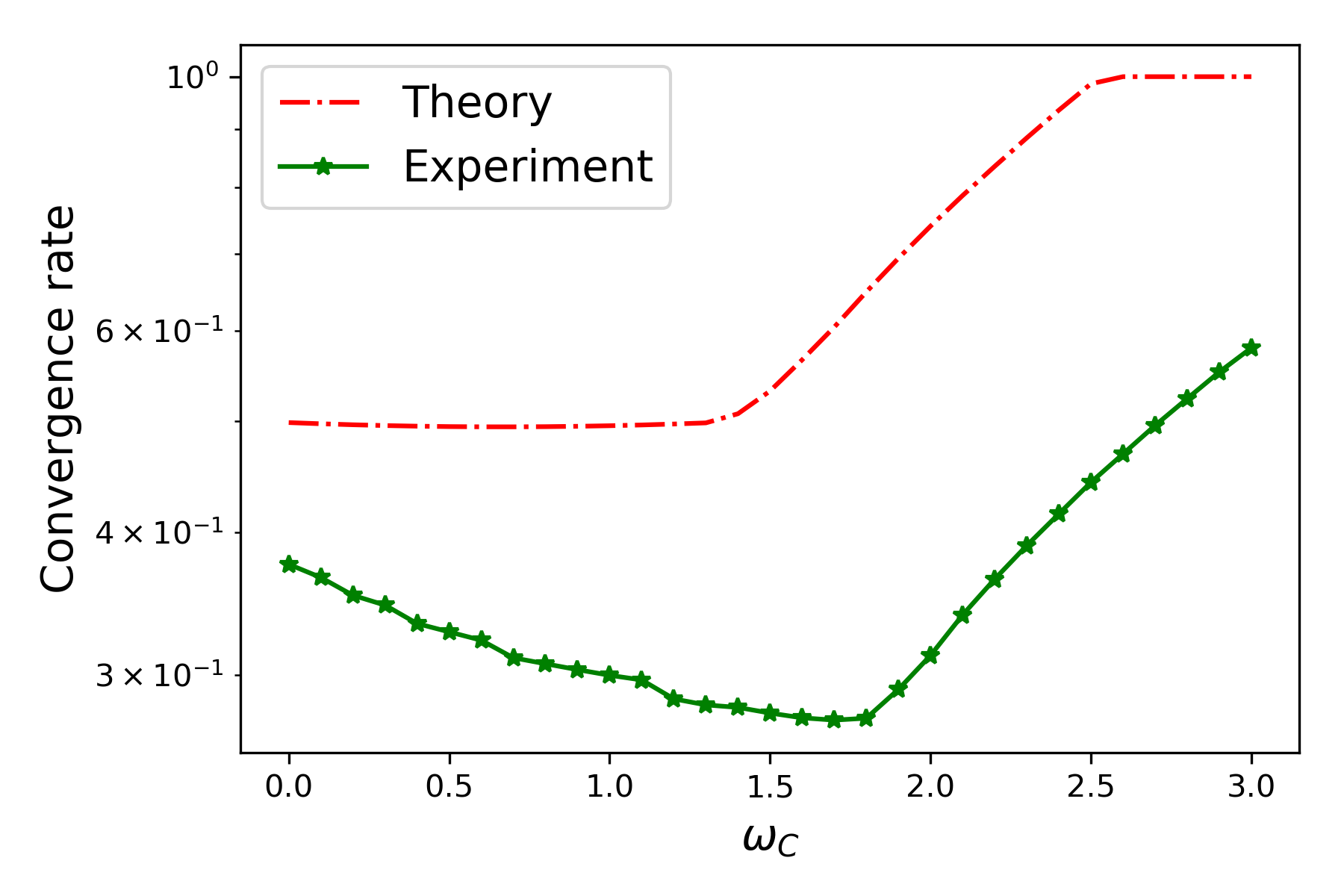}
    \caption{\normalsize Convergence Rate}
    \end{subfigure}
     \begin{subfigure}[b]{0.4\textwidth}
    \includegraphics[width=\textwidth]{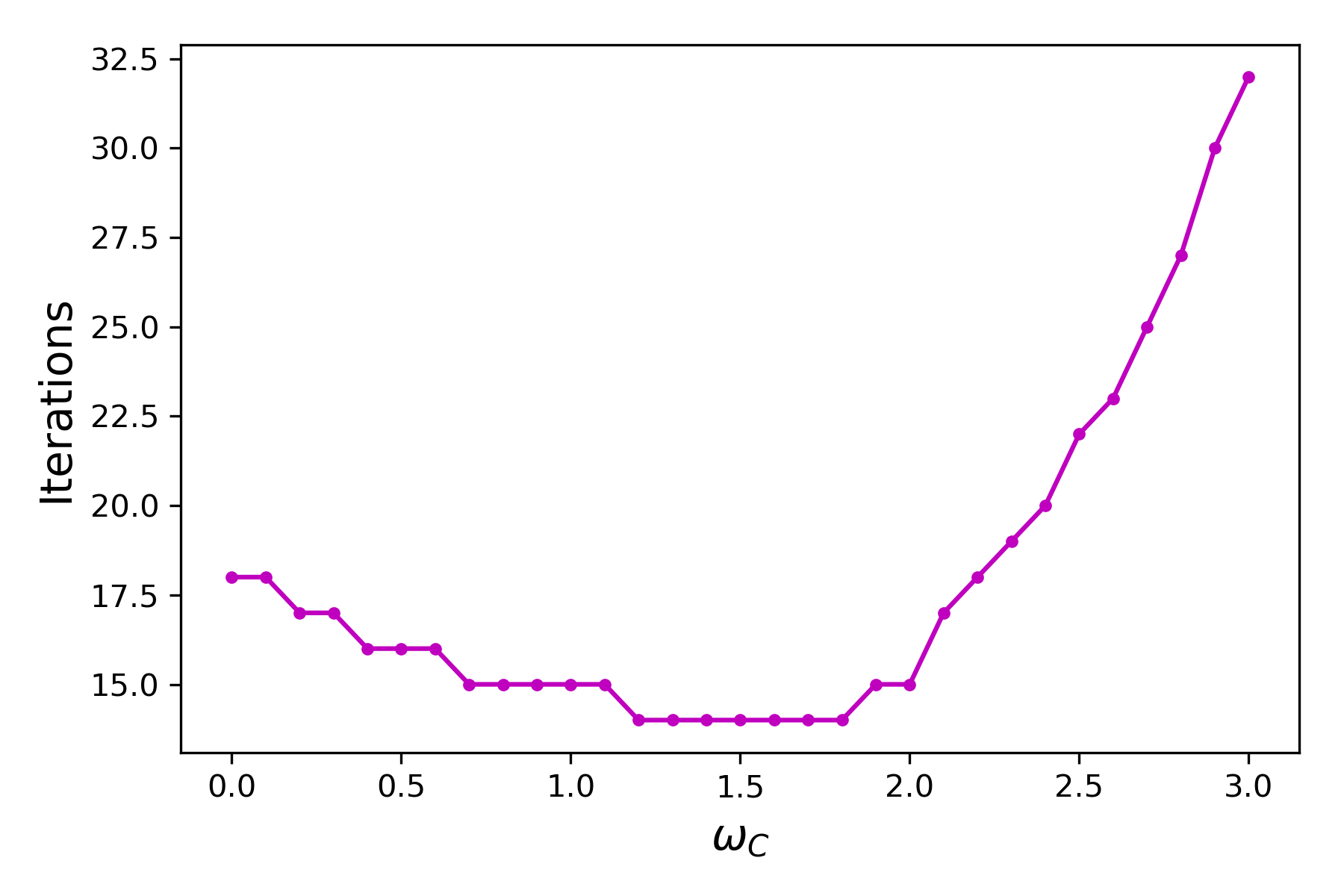}
    \caption{\normalsize Iterations}
    \end{subfigure}
    \caption{Two-level MGRIT theoretical bound (dashed line in left plot), experimental convergence rates (solid line in left plot) and iteration counts  (right plot) as a function of relaxation weights $\omega_C$ for the one-dimensional linear advection equation with purely imaginary spatial eigenvalues, coarsening factor $m = 2$, and grid size $(N_x, N_t) = (1025, 1025)$.}
    \label{fig:AdvcC new theoretical bound}
\end{figure}

Next, we summarize the experimentally best relaxation weights for the 1D heat equation and the 1D advection equation with purely imaginary spatial eigenvalues. 
For the full multilevel experiments, V-cycles are used and we coarsen down to a grid of size 4 or less in time.  During searches in the weight-space for experimentally optimal weights, we use a step size of 0.1, and in these tables we report only the best weight in comparison to a unitary weight of 1.0.  For expanded versions of these tables, please see Supplemental Materials \ref{app2}, 
Tables \ref{tab:Heat Conv and Iter Two level}, \ref{tab:Heat Conv and Iter Multi level}, \ref{tab:LA Conv and Iter for Twolevel}, and \ref{tab:LA Conv and Iter for Multi level}.   Regarding notation, $\omega_{CC}$ denotes the weight for the second weighted
relaxation, if degree-two (FCFCF) weighted relaxation is used.  If only
$\omega_C$ is given, then only degree-one (FCF) weighted relaxation is used. 

Tables \ref{tab:1D Heat results Two-level} and \ref{tab:1D Heat results Multilevel} depict the results for the 1D heat equation for a two-level and multi-level solver, respectively.  The best experimental weight for degree-one relaxation in both cases is $\omega_C=1.3$ and saves 1 iteration on the largest problem, or approximately 10\%--14\%.  The best weights $(\omega_C,\omega_{CC})$ for degree-two relaxation differ between two-level and multilevel, but similarly save 1 iteration. Other coarsening factors $m$ were tested, but generated the same experimentally best weights (see Supplemental Results Section  \ref{sec:results_heat} for more details).

Tables \ref{tab:1D Advc results Two-level} and \ref{tab:1D Advc results Multilevel} depict the results for the 1D advection equation with purely imaginary spatial eigenvalues for a two-level and multilevel solver, respectively. The best experimental weights for degree-one relaxation differ between not only two-level and multilevel but also coarsening factors $m=2$ and $m=4$. The best experimental weight in the two-level case with $m=4$ is $\omega_C=1.5$ and saves 2--3 iterations on the larger problems, or approximately 5\%-9\%. The best experimental weight in the multilevel case with $m=2$ is $\omega_C=1.5$ and saves 15 iterations on the second largest problem, or approximately 22\%.  The best weights $(\omega_C,\omega_{CC})$ for degree-two relaxation have been omitted for brevity, but are in Supplemental Materials Section \ref{sec:results_adv_imag}.

\begin{table}[h!]
	\centering
	\begin{tabular}{c r|c|c|c|c}
		
		& $N_x \times N_t$ & $291 \times 4097$ & $411 \times 8193$ & $581 \times 16385$ & $821 \times 32769$ \\ \toprule
		\multirow{4}{*}{$m=2$} & $\omega_C=1.0$                      & 0.049 (7) & 0.048 (7) & 0.039 (7) & 0.039 (7) \\ 
		& $1.3$                                                      & 0.036 (7) & 0.036 (7) & 0.034 (6) & 0.034 (6) \\
		& $(\omega_C,\omega_{CC})=(1.0, 1.0)$ & 0.029 (6) & 0.029 (6) & 0.029 (6) & 0.028 (6) \\ 
		& $(1.7, 0.9)$                                      & 0.020 (6) & 0.020 (6) & 0.019 (6) & 0.016 (5) \\  \bottomrule
	\end{tabular}
	\caption{1D heat equation, two-level MGRIT convergence rates (iterations) for weighted FCF- and FCFCF-relaxation with unitary weights and the experimentally best weights. }
	\label{tab:1D Heat results Two-level}
\end{table}

\begin{table}[h!]
	\centering
	\begin{tabular}{c r|c|c|c|c}
		
		& $N_x \times N_t$ & $291 \times 4097$ & $411 \times 8193$ & $581 \times 16385$ & $821 \times 32769$ \\   \toprule
		\multirow{4}{*}{$m=2$} & $\omega_C=1.0$                      & 0.118 (9) & 0.121 (9) & 0.123 (9) & 0.125 (9) \\ 
		& $1.3$                                                      & 0.092 (8) & 0.095 (8) & 0.096 (8) & 0.096 (8) \\ 
		& $(\omega_C,\omega_{CC})=(1.0, 1.0)$ & 0.065 (7) & 0.066 (7) & 0.067 (7) & 0.068 (7) \\ 
		& $(2.0, 0.9)$                                      & 0.032 (6) & 0.032 (6) & 0.032 (6) & 0.032 (6) \\ \bottomrule
	\end{tabular}
	\caption{1D heat equation, multilevel MGRIT convergence rates (iterations) for weighted FCF- and FCFCF-relaxation with unitary weights and the experimentally best weights.}
	\label{tab:1D Heat results Multilevel}
\end{table}

\begin{table}[h!]
	\centering
	\begin{tabular}{l r|c|c|c|c}
		
		& $N_x \times N_t$ & $513 \times 513$ & $1025 \times 1025$ & $2049 \times 2049$ & $4097 \times 4097$ \\ \toprule
		\multirow{2}{*}{$m=2$} & $\omega_C = 1.0$                   & 0.304 (15) & 0.307 (15) & 0.308 (15) & 0.309 (15) \\ 
		& $1.8$                                                     & 0.280 (14) & 0.282 (14) & 0.284 (14) & 0.285 (14) \\ \midrule
		\multirow{2}{*}{$m=4$} & $\omega_C =1.0$                    & 0.564 (30) & 0.607 (34) & 0.617 (35) & 0.619 (35) \\ 
		& $1.5$                                                     & 0.568 (30) & 0.581 (31) & 0.591 (32) & 0.596 (33) \\ \bottomrule
	\end{tabular}
	\caption{1D linear advection equation, two-level MGRIT convergence rates (iterations) for weighted FCF-relaxation with unitary weights and the  experimentally best weights.}
	\label{tab:1D Advc results Two-level}
\end{table}

\begin{table}[h!]
	\centering
	\begin{tabular}{c r|c|c|c|c}
		&$N_x \times N_t$ & $513 \times 513$ & $1025 \times 1025$ & $2049 \times 2049$ & $4097 \times 4097$ \\ \toprule 
		\multirow{2}{*}{$m=2$} &   $\omega_C = 1.0$                 & 0.560 (30) & 0.675 (44) & 0.771 (67) & $(>100)$ \\ 
		& $1.5$                                                     & 0.495 (24) & 0.606 (35) & 0.718 (52) & 0.810 (82) \\ \midrule
		\multirow{2}{*}{$m=4$} & $\omega_C = 1.0$                   & 0.581 (32) & 0.666 (42) & 0.757 (61) & 0.838 (95) \\ 
		& $1.4$                                                     & 0.535 (27) & 0.611 (34) & 0.712 (50) & 0.802 (77) \\ \bottomrule
	\end{tabular}
	\caption{1D linear advection equation, multilevel MGRIT convergence rates (iterations) for weighted FCF-relaxation with unitary weights and the experimentally best weights.}
	\label{tab:1D Advc results Multilevel}
\end{table}

\subsection{Visualizing the convergence bound}
\label{sec:verify:vis}

Recall that $\{\lambda_{\gamma}\}$ and $\{\mu_{\gamma}\}$ are the eigenvalues of $\Phi$ and $\Phi_{\triangle}$, respectively corresponding to the same set of eigenvectors $\{v_\gamma\}$. That is, $\Phi$ and $\Phi_{\triangle}$ are diagonalized by the eigenvectors 
$\{v_\gamma\}$.
If $\kappa_{\gamma} \geq 0$ is an eigenvalue of the linear operator $G$ in (\ref{ODE}), the corresponding eigenvalue of $\Phi$ is given by
\begin{equation}
\lambda_\gamma = 1 + h_t \kappa_\gamma \mathbf{b}_0^T (I - h_t \kappa_\gamma A_0)^{-1} \mathbf{1}, \mbox{ and }
\mu_\gamma = 1 + m h_t \kappa_\gamma \mathbf{b}_0^T (I - m h_t \kappa_\gamma A_0)^{-1} \mathbf{1}
\end{equation}
where the Runge-Kutta matrix $A_0 = (a_{i,j})$ and weight vector $\mathbf{b}_0^T = (b_1,...,b_s)^T$ are taken from the Butcher tableau of an s-stage Runge-Kutta method \cite{FrSo2020}.

Here, we consider A-stable two-stage third-order SDIRK-23, L-stable two-stage second-order SDIRK-22, and L-stable three-stage third-order SDIRK-33 methods (see Appendix of \cite{FrSo2020} for coefficients), where SDIRK refers to singly diagonally implicit Runge-Kutta. Figures \ref{heat map: A-stable SDIRK23} -- \ref{heat map: L-stable SDIRK33} depict the convergence bound \eqref{eq:bound} in the complex plane as a function of $h_t \kappa_\gamma$ over various $\omega_C$ for these methods, respectively. Overall, the L-stable schemes lead to significantly better MGRIT convergence bounds than the A-stable scheme, consistent with the discussion and results for unweighted relaxation in \cite{FrSo2020}, and, more importantly, numerical results using weighted relaxation in Section
\ref{sec:lstable}. Additionally, note from Figure \ref{heat map: A-stable SDIRK23} that for unweighted relaxation ($\omega_C =1$), two-level MGRIT is divergent in much of the complex plane (a known phenomenon \cite{FrSo2020}). However, applying under-relaxation with $\omega_C = 0.8$ restores reasonable convergence in much of the complex plane. This behavior is confirmed in practice in Section \ref{sec:astable}. Similarly, applying under-relaxation to L-stable SDIRK-33 in Figure \ref{heat map: L-stable SDIRK33} yields convergence, albeit slow, along the imaginary axis. Spatial eigenvalues on the imaginary axis are notoriously difficult for MGRIT to converge on, as can be seen with the theoretical bounds for $\omega_C=1$. To the best of our knowledge, backward Euler is the \emph{only} one-step time-integration scheme that yields convergence on the imaginary axis.\footnote{It is important to note that for unweighted relaxation, two-level convergence bounds are \emph{necessary and sufficient}\cite{So2019}.} Here, we see that weighted relaxation can yield convergence on higher-order integration schemes as well.

\begin{figure}[htb]
    \centering 
\begin{subfigure}{0.3\textwidth}
   \includegraphics[width=\linewidth]{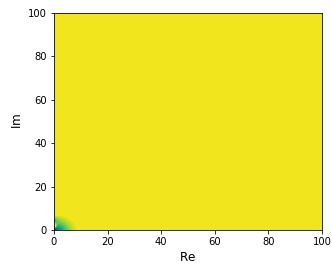}
  \caption{\normalsize $\omega_C=0.5$}
\end{subfigure}\hfil
\begin{subfigure}{0.3\textwidth}
   \includegraphics[width=\linewidth]{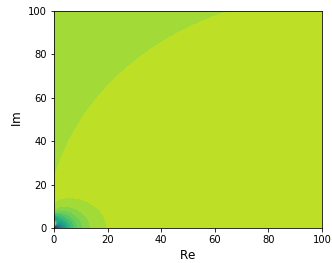}
  \caption{\normalsize $\omega_C=0.6$}
\end{subfigure}\hfil
\begin{subfigure}{0.35\textwidth}
   \includegraphics[width=\linewidth]{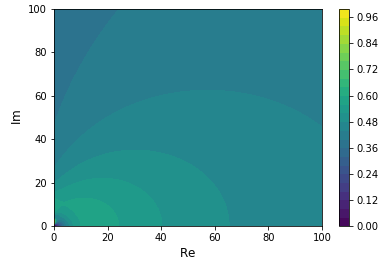}
  \caption{\normalsize $\omega_C=0.7$}
\end{subfigure}

\medskip
\begin{subfigure}{0.3\textwidth}
   \includegraphics[width=\linewidth]{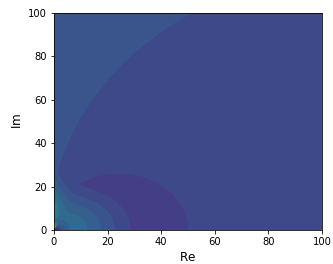}
  \caption{\normalsize $\omega_C=0.8$}
\end{subfigure}\hfil
\begin{subfigure}{0.3\textwidth}
   \includegraphics[width=\linewidth]{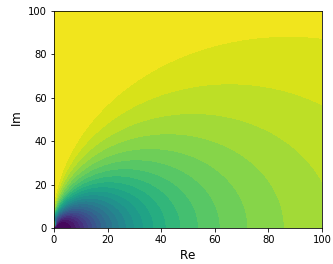}
  \caption{\normalsize $\omega_C=1.0$}
\end{subfigure}\hfil
\begin{subfigure}{0.3\textwidth}
   \includegraphics[width=\linewidth]{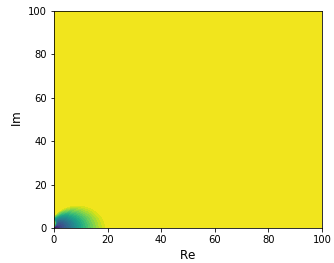}
  \caption{\normalsize $\omega_C=1.2$}
\end{subfigure}
\caption{Two-level MGRIT theoretical convergence bound as a function of Re($h_t \kappa_\gamma$) and Im($h_t \kappa_\gamma$), for $m=2$ and A-stable 2-stage SDIRK-23.}
\label{heat map: A-stable SDIRK23}
\end{figure}

\begin{figure}[htb]
    \centering 
\begin{subfigure}{0.3\textwidth}
   \includegraphics[width=\linewidth]{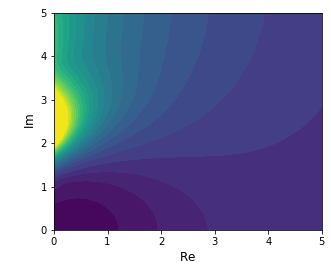}
  \caption{\normalsize $\omega_C=0.5$}
\end{subfigure}\hfil
\begin{subfigure}{0.3\textwidth}
   \includegraphics[width=\linewidth]{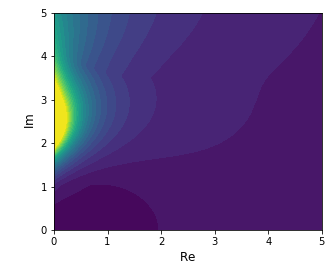}
  \caption{\normalsize $\omega_C=0.75$}
\end{subfigure}\hfil
\begin{subfigure}{0.35\textwidth}
   \includegraphics[width=\linewidth]{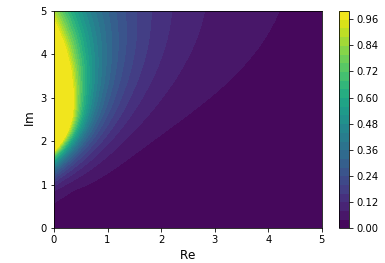}
  \caption{\normalsize $\omega_C=1.0$}
\end{subfigure}

\medskip
\begin{subfigure}{0.3\textwidth}
   \includegraphics[width=\linewidth]{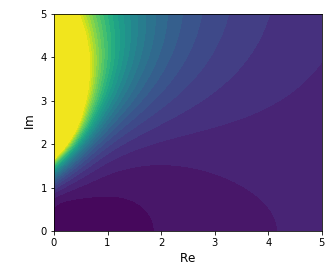}
  \caption{\normalsize $\omega_C=1.25$}
\end{subfigure}\hfil
\begin{subfigure}{0.3\textwidth}
   \includegraphics[width=\linewidth]{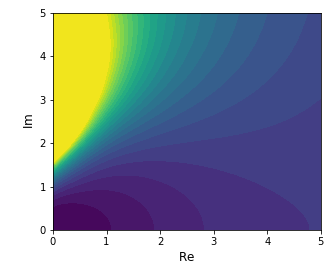}
  \caption{\normalsize $\omega_C=1.5$}
\end{subfigure}\hfil
\begin{subfigure}{0.3\textwidth}
   \includegraphics[width=\linewidth]{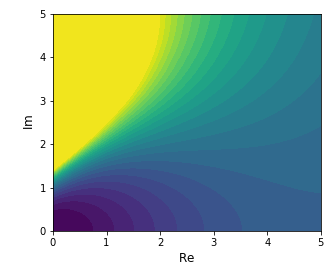}
  \caption{\normalsize $\omega_C=2.0$}
\end{subfigure}
\caption{Two-level MGRIT theoretical convergence bound as a function of Re($h_t \kappa_\gamma$) and Im($h_t \kappa_\gamma$), for $m=2$ and L-stable 2-stage SDIRK-22.}
\label{heat map: L-stable SDIRK22}
\end{figure}

\begin{figure}[htb]
    \centering 
\begin{subfigure}{0.3\textwidth}
   \includegraphics[width=\linewidth]{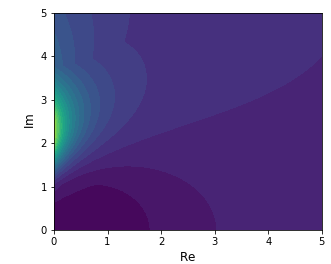}
  \caption{\normalsize $\omega_C=0.7$}
\end{subfigure}\hfil
\begin{subfigure}{0.3\textwidth}
   \includegraphics[width=\linewidth]{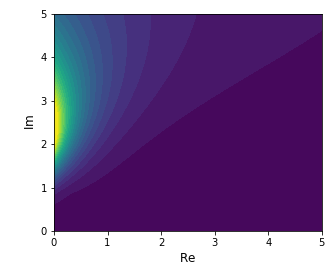}
  \caption{\normalsize $\omega_C=1.0$}
\end{subfigure}\hfil
\begin{subfigure}{0.35\textwidth}
   \includegraphics[width=\linewidth]{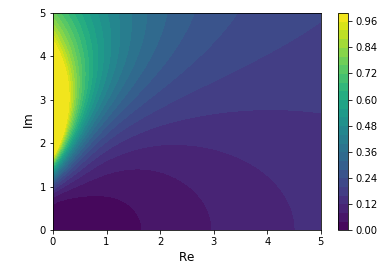}
  \caption{\normalsize $\omega_C=1.3$}
\end{subfigure}

\medskip
\begin{subfigure}{0.3\textwidth}
   \includegraphics[width=\linewidth]{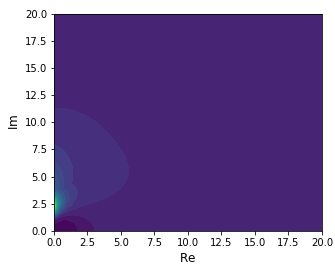}
  \caption{\normalsize $\omega_C=0.7$, and the axes go up to 20.}
\end{subfigure}\hfil
\begin{subfigure}{0.3\textwidth}
   \includegraphics[width=\linewidth]{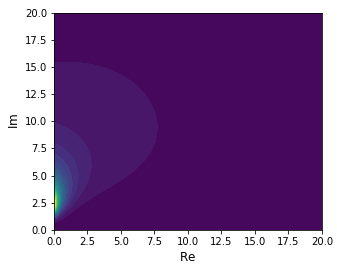}
  \caption{\normalsize $\omega_C=1.0$, and the axes go up to 20.}
\end{subfigure}\hfil
\begin{subfigure}{0.3\textwidth}
   \includegraphics[width=\linewidth]{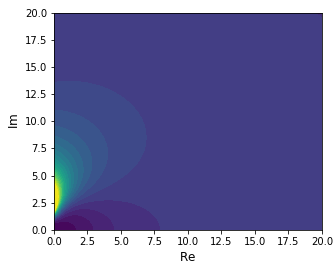}
  \caption{\normalsize $\omega_C=1.3$, and the axes go up to 20.}
\end{subfigure}
\caption{Two-level MGRIT theoretical convergence bound as a function of Re($h_t \kappa_\gamma$) and Im($h_t \kappa_\gamma$), for $m=2$ and L-stable 3-stage SDIRK-33.}
\label{heat map: L-stable SDIRK33}
\end{figure}


\section{Results}
\label{sec:results}

This section demonstrates MGRIT with weighted relaxation on a 2D advection-diffusion problem and a nonlinear eddy current problem. 
%

\subsection{2D Convection-Diffusion with discontinuous Galerkin elements}
\label{sec:2Dresults}

To indicate generality of the proposed weighted relaxation scheme, we now consider the advection-diffusion 
problem 
\begin{align}
   \label{eqn:adv-diff}
   \frac{\partial u}{\partial t} + \mathbf{b}(t,\mathbf{x})\cdot \nabla u - \epsilon \nabla \cdot \nabla u & = 0, \quad \mathbf{x} \in \Omega, \quad t \in [0,T] \\
      u(\mathbf{x},0) &= u_0(\mathbf{x}),\quad  \mathbf{x} \in \Omega,
\end{align}
where $\epsilon > 0$ is the diffusion constant, $\Omega$ is a bounded convex domain in  2D, and the
boundary conditions are periodic in space. The final time $T$ is set to 20 and
$\mathbf{b}=(\sqrt{2/3},\sqrt{1/3})$.  Letting $\mathbf{x}=(x_1,x_2)$, the initial condition is 
\begin{align*}
u_0(\mathbf{x})
  &= \frac{1}{16}
  \operatorname{erfc}[w (x_1-c_1-r_1)] \operatorname{erfc}[-w (x_1-c_1+r_1)] \\
  &\phantom{= \frac{1}{16}} ~\times
  \operatorname{erfc}[w (x_2-c_2-r_2)] \operatorname{erfc}[-w (x_2-c_2+r_2)],
\end{align*}
which defines a smooth rectangular hump with $\operatorname{erfc}(x)$ the complementary error function, 
$(c_1,c_2)=(0,-0.2)$, $(r_1,r_2)=(0.45,0.25)$, and $w=10$.  


We use the MFEM library \cite{mfem} to discretize over a regular quadrilateral
grid on a hexagonal domain $\Omega$, corresponding to the file
\texttt{mfem/data/periodic-hexagon.mesh}.  In space, we use $Q_1$ (bi-linear)
or $Q_3$ (bi-cubic) discontinuous Galerkin (DG) elements with a standard upwind
scheme for the advective term and the interior penalty (IP) \cite{ArBrCoMa2002}
scheme for the diffusion term.  In time, we consider backward Euler (L-stable), 
the A-stable two-stage third-order SDIRK-23 method, 
and the L-stable three-stage third-order SDIRK-33 
method.  


%

The numerical setup uses MGRIT V-cycles with a random initial guess and a residual halting tolerance of $10^{-10} / (h_x \sqrt{\delta_t}) $.  
The iterations are capped at 125, with ``125+" indicating that this maximum was
reached.  The value $N_x$ represents the total number of spatial
degrees-of-freedom, and grows by a factor of 4 each uniform refinement because space is now
2D.  The number of time points grows by a factor of 2, so that $\delta_t /
h_x = 0.477$ is fixed for all test problems, where $h_x$ refers to the spatial mesh size. 
Regarding the diffusive term, the 
ratio $\delta_t / h_x^2$ varies from $1.9245$ for the smallest problem, to $15.396$
on the largest problem, representing moderate ratios typical for an implicit scheme.

\subsubsection{Results for L-Stable Schemes}
\label{sec:lstable}
Tables \ref{tab:order1_CD} and \ref{tab:order3_CD} depict these results for the
case of bilinear DG elements with backward Euler and bi-cubic DG elements with L-stable
SDIRK-33, respectively.   Three diffusion constants, $\epsilon = 0.1, 0.01$, and
$0.001$, are depicted to highlight the benefits of weighted relaxation for
three different MGRIT convergence regimes.  The first regime concerns
sufficiently diffusive problems, where MGRIT convergence is bounded with
growing problem size \cite{Do2016}.  This is observed for the $\epsilon=0.1$
case.  For the next regime when $\epsilon=0.01$, the problem is on the cusp of
sufficient diffusiveness, as evidenced by the growing iteration counts for
backward Euler in Table \ref{tab:order1_CD}, but flat iteration counts in Table
\ref{tab:order3_CD} for some weight values.\footnote{Note that SDIRK-33 is a
more favorable time-stepping scheme for MGRIT convergence and diffusive
problems \cite{Do2016}, thus it is not surprising that it provides better
performance here.  In fact, if these experiments are repeated with bi-cubic DG
elements and backward Euler, the results are almost identical to Table
\ref{tab:order1_CD} for bilinear DG elements and backward Euler, thus
indicating that the use SDIRK-33 is the factor leading to the improved
convergence.} When $\epsilon = 0.001$, convergence is poor in both cases.

In all three regimes, the benefits of weighted relaxation can be observed and
are similar to those benefits observed for the 1D model problems in the Supplemental 
Materials \ref{app2}. 
For the first-order discretizations in Table 
\ref{tab:order1_CD}, a weight choice of 1.6 is experimentally found to be best, saving
15\%--20\% of iterations, which aligns with the best weight choice for 1D advection in 
Appendix \ref{app2}.\footnote{We note that while the tables only show
a handful of weight choices, thorough experimentation with under- and over-relaxation using a weight step-size of 0.1 was done to find the experimentally best 
choices.  } For the third-order discretizations in Table \ref{tab:order3_CD}, a weight choice 
of 1.3 is  experimentally found to be best, saving 10\%--15\% of iterations.  This does not 
align with  the best weight choice for 1D advection in Appendix \ref{app2}, but instead  
aligns with the best weight choice for 1D diffusion.  Thus, we can say that the simple 1D 
model problems from Appendix \ref{app2} provide a useful, but rough guide for choosing 
relaxation weights for more complicated problems.  Lastly, we note that under-relaxation was 
not beneficial for these cases, as indicated by the $\omega_C = 0.7$ case.

\begin{table}[h!]
\centering
\begin{tabular}{c r|c|c|c|c}
     & $N_x \times N_t$ & $192 \times 192$ & $768 \times 384$ & $3072 \times 768$ & $12288 \times 1536$ \\  \toprule
     \multirow{5}{*}{$\epsilon=0.001$} & $\omega_C=0.7$  & 29  & 39  & 56  & 125+ \\ 
     &$1.0$           & 25 & 32 & 47 & 65 \\ 
     &$1.3$           & 22 & 28 & 42 & 58 \\ 
     &$1.6$           & 29 & 38 & 40 & 52 \\ 
     &$1.9$           & 38 & 63 & 112& 125+ \\ \midrule
     \multirow{5}{*}{$\epsilon=0.01$} & $\omega_C=0.7$  & 28 & 34 & 45 & 53 \\ 
     & $1.0$          & 24 & 30 & 38 & 46 \\ 
     &$1.3$           & 21 & 27 & 32 & 41 \\ 
     &$1.6$           & 28 & 30 & 28 & 37 \\ 
     &$1.9$           & 37 & 58 & 81 & 76 \\  \midrule
     \multirow{5}{*}{$\epsilon=0.1$} & $\omega_C=0.7$ & 16 & 19 & 21 & 23 \\ 
     & $1.0$          & 13 & 16 & 18 & 19 \\ 
     &$1.3$           & 12 & 14 & 16 & 17 \\ 
     &$1.6$           & 15 & 16 & 14 & 16 \\ 
     &$1.9$           & 24 & 29 & 26 & 26 \\ \bottomrule
\end{tabular}
\caption{Multilevel MGRIT iterations for 2D advection-diffusion over various diffusion constants $\epsilon$, with bilinear 1 DG elements, backward Euler in time, FCF-relaxation, and $m=2$.  For the cases labeled ``125+", the solver is still diverging with a convergence rate over 1 at iteration 125.}
\label{tab:order1_CD}
\end{table}

\begin{table}[h!]
\centering
\begin{tabular}{c r|c|c|c|c}
    
     & $N_x \times N_t$ & $768 \times 192$ & $3072 \times 384$ & $12288 \times 768$ & $49152 \times 1536$ \\  \toprule
     \multirow{5}{*}{$\epsilon=0.01$} & $\omega_C=0.7$   & 32 & 31 & 29 & 29 \\ 
     & $1.0$           & 27 & 25 & 25 & 25 \\ 
     &$1.3$            & 25 & 22 & 22 & 22 \\ 
     &$1.6$            & 37 & 43 & 32 & 27 \\  
     &$1.9$            & 52 & 66 & 73 & 68 \\   \midrule
     \multirow{5}{*}{$\epsilon=0.1$} & $\omega_C=0.7$   & 11  & 10  & 10  & 10 \\ 
     &$1.0$            & 9  & 9  & 9  & 9 \\ 
     &$1.3$            & 9  & 8  & 8  & 8 \\ 
     &$1.6$            & 12 & 10 & 10 & 9 \\ 
     &$1.9$            & 19 & 17 & 18 & 15 \\  \bottomrule
\end{tabular}
\caption{Multilevel MGRIT iterations for 2D advection-diffusion over various diffusion constants $\epsilon$, with bi-cubic 3 DG elements, SDIRK-33 in time, FCF-relaxation, and $m=2$. Results for $\epsilon=0.001$ are omitted because all test cases larger than the smallest took 125+ iterations. }
\label{tab:order3_CD}
\end{table}

\subsubsection{A-stable Results}
\label{sec:astable}

Table \ref{tab:order3_Astable_CD} repeats the above experiments for the A-stable 
SDIRK-23 scheme with bi-cubic DG elements in space. We also consider larger $\epsilon$ (i.e., stronger diffusion) as
this highlights the benefits of weighted-relaxation. Results for $\epsilon =
0.001$ are omitted because all test cases larger than the smallest took
125+ iterations. Weights larger than 1.0 are also omitted as they did not improve
convergence.

Consistent with the discussion in Section \ref{sec:verify:vis}, we find that
under-relaxation ($\omega_C < 1.0$) is beneficial, with $\omega_C = 0.7$
providing the best performance.  In fact, in most cases this under-relaxation even restores
convergence compared with unweighted relaxation, where the 125+ label for
$\omega_C = 1.0$ corresponds to a convergence rate larger than one. 
This divergence for $\omega_C = 1.0$ is not surprising, as the work \cite{FrSo2020}
shows that A-stable schemes do not generally yield good MGRIT convergence and
often lead to divergence, even for problems of a parabolic character.

Lastly, we compare Table \ref{tab:order3_Astable_CD} to the convergence plots in Figure \ref{heat map: A-stable SDIRK23}. Convergence for $\omega_C = 0.7$
improves as the problem size increases.  This is most likely due to increasing numerically diffusivity
as the grid is refined, which results in the 
spectrum being pushed into the region of more rapid convergence close to the real axis 
in Figure \ref{heat map: A-stable SDIRK23}.  Additionally, overall performance degrades for larger
$\epsilon$, which is due to the spectrum being pushed out of the region of convergence (i.e., farther up the positive real axis) in Figure \ref{heat map: A-stable SDIRK23}.  
Similarly, as $\epsilon$ decreases, the spectrum is 
pushed to the imaginary axis in Figure \ref{heat map: A-stable SDIRK23}, and convergence eventually degrades, as
is observed for $\epsilon=0.001$.  
For this problem and time-discretization, MGRIT convergence is best for $\epsilon = 0.1$, and interestingly, the advection terms actually \emph{help} MGRIT converge for this problem.

\begin{table}[h!]
\centering
\begin{tabular}{c r|c|c|c|c}
    
     & $N_x \times N_t$ & $192 \times 192$ & $768 \times 384$ & $3072 \times 768$ & $12288 \times 1536$ \\  \toprule
     \multirow{4}{*}{$\epsilon=0.01$} & $\omega_C=0.6$  & 51 & 60 & 55 & 50 \\ 
                                                &$0.7$  & 47 & 54 & 49 & 45 \\ 
                                                &$0.8$  & 43 & 50 & 44 & 42 \\ 
                                                &$1.0$  & 43 & 85 &125+& 125+\\ \midrule
     \multirow{4}{*}{$\epsilon=0.1$} & $\omega_C=0.6$   & 38 & 38 & 32 & 27 \\ 
                                                &$0.7$  & 32 & 32 & 27 & 23 \\ 
                                                &$0.8$  & 36 & 47 & 47 & 42 \\ 
                                                &$1.0$  & 48$^{*}$ & 96$^{*}$&125+&125+ \\ \midrule
     \multirow{4}{*}{$\epsilon=1.0$}  & $\omega_C=0.6$  & 44 & 43 &38  & 30 \\ 
                                                &$0.7$  & 38 & 38 &33  & 26 \\ 
                                                &$0.8$  & 41 & 57 &63  & 53 \\ 
                                                &$1.0$  & 48$^{*}$ & 96$^{*}$ &125+&125+ \\ \midrule
     \multirow{4}{*}{$\epsilon=100.0$}& $\omega_C=0.6$  & 52 & 59 &60  & 59 \\ 
                                                &$0.7$  & 44 & 52 &52  & 51 \\ 
                                                &$0.8$  & 44 & 66 &90  & 98 \\ 
                                                &$1.0$  & 48$^{*}$ & 96$^{*}$ &125+& 125+ \\ \midrule
\end{tabular}
   \caption{Multilevel MGRIT iterations for 2D advection-diffusion over various diffusion constants $\epsilon$, with bi-cubic DG elements, SDIRK-23 in time, FCF-relaxation, and $m=2$.  The asterisk $^{*}$ refers to convergence due only to the exactness property of FCF-relaxation, where FCF-relaxation reproduces sequential time-stepping in $(N_t-1)/2m$ iterations \cite{Fa2014}.  For all cases labeled ``125+", the solver is still diverging with a convergence rate over 1 at iteration 125. }
\label{tab:order3_Astable_CD}
\end{table}

\subsection{Nonlinear Eddy Current Problem}
The last example illustrates the performance of the new relaxation scheme for a nonlinear eddy current problem. The eddy current problem is an approximation of Maxwell's equations that is commonly used in the simulation of electrical machines, such as induction machines, transformers, or cables. Here, we consider a coaxial cable model. Let $\Omega = \Omega_1\cup\Omega_2\cup\Omega_3$ denote a 2D cross-section of the 3D cable model, as depicted in Figure \ref{fig:cable_model}. 

\begin{figure}[h!]
    \centering
	\begin{tikzpicture}
		\draw[semithick,fill=gray!40] (0,0) ellipse (0.2 and 0.46);
		\draw[semithick,fill=white] (0,0) ellipse (0.1 and 0.3);
		\draw[line width=2pt,color=black] (-1.3,0) -- (.1,0);
		\draw[line width=2pt,color=black] (.5,0) -- (4.3,0);
		\draw[semithick] (0,0.46) -- (3,0.46);
		\draw[semithick] (0,-0.46) -- (3,-0.46);
		\draw[semithick] (3,-0.46) arc(-90:90:0.2 and 0.46);
		\filldraw[fill=gray!70, draw=black]
			(0,-.46) -- (3,-.46) arc (-90:90:0.2 and 0.46) -- (0,.46) arc (90:-90:0.2 and 0.46);
			
		\draw[semithick,->,>=latex] (-.5,-1.3) -- (.06,-1.65) node[right=-1pt] {$\small x$};
		\draw[semithick,->,>=latex] (-.5,-1.3) -- (-.5,-.6) node[above right=-3pt] {$\small y$};
		\draw[semithick,->,>=latex] (-.5,-1.3) -- (-1.2,-1.3) node[left=-1pt] {$z$};
		
		\draw[semithick,fill=gray!40] (0+7,0) circle (40pt);
		\draw[semithick,fill=white] (0+7,0) circle (25pt);
		\draw[semithick,fill=black] (0+7,0) circle (7pt);
		
		\draw (0+9,-.5) node (omega0) {$\Omega_0$};
		\draw[semithick] (0+7,0) -- (omega0);
		\draw (0+7.3,-.5) node (omega1) {$\Omega_1$};
		\draw (0+7,-1.1) node (omega2) {$\Omega_2$};
		
		\draw[semithick,->,>=latex] (-2+7,-1.3) -- (-1.4+7,-1.3) node[right=-2pt] {$x$};
		\draw[semithick,->,>=latex] (-2+7,-1.3) -- (-2+7,-.6) node[above right=-3pt] {$y$};
	\end{tikzpicture}
\caption{\label{fig:cable_model} Coaxial cable model and its cross section. The inner, black region $\Omega_0$ models the copper wire, the white region $\Omega_1$ the air insulator and the outer, gray region $\Omega_2$ the conducting shield \cite{FEMM}.
}
\end{figure}
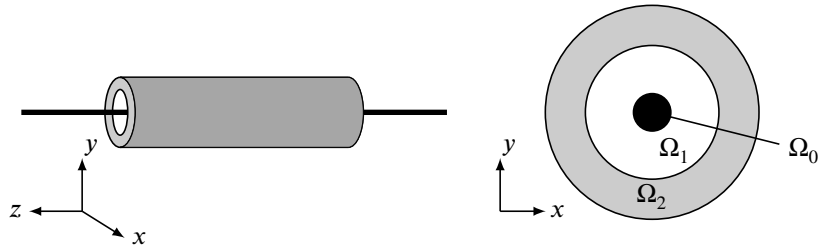

For a voltage-driven system, the eddy current problem is coupled with an additional equation, resulting in the following system for unknown magnetic vector potential $A:\Omega\times (0, T] \to \mathbb{R}$ and the electric current $i_s: (0, T] \to \mathbb{R}$:
\begin{align}
\sigma \partial_t A - \nabla \cdot \big(\nu\nabla A) - \chi_{s} i_s &= 0, \label{eq:ecp} \\
\frac{d}{dt}\int_{\Omega} \chi_{s}\cdot A\; dV & = \upsilon_{s},\label{eq:vi}
\end{align}
with homogeneous Dirichlet boundary condition $A = 0$ on $\partial\Omega$ and the initial value $A(\mathbf{x}, 0) = 0, ~\mathbf{x}\in\Omega$. The electrical conductivity $ \sigma \geq 0$ is only non-zero in the tube region $\Omega_2 \; (\text{here set to } 10$ MS/m$)$, and the (isotropic, nonlinear) magnetic reluctivity $ \nu(\mathbf{x}, |\nabla A|)$ is modeled by a  vacuum $(1 / \mu_0)$ in $\Omega_0$ and $\Omega_1$ and by a monotone cubic spline curve in $\Omega_2$. The current distribution function $\chi_s: \Omega \rightarrow \mathbb{R}$ represents a stranded conductor in the model \cite{schoeps2013}. The relationship between the spatially integrated time derivative of the magnetic vector potential, called flux linkage, and the voltage $v_s$ is modeled by Equation \eqref{eq:vi}. The voltage is a pulsed voltage source, produced by comparing a reference wave with a triangular wave,
\[
    v_s(t) = 0.25\mathrm{sign}\left[r_s(t) - s_n(t)\right], \quad t\in (0,T],
\]
with reference signal
\[
    r_s(t) = \sin\left(\dfrac{2\pi}{T} t\right)
\]
and bipolar trailing-edge modulation using a sawtooth carrier signal
\[
    s_n(t) = \dfrac{n}{T}t - \left\lfloor\dfrac{n}{T}t\right\rfloor,
\]
with $n=200$ teeth and electrical period $T=0.02\;$s \cite{gander2019}. 

We use linear edge shape functions with 2269 degrees of freedom in space to discretize \eqref{eq:ecp}--\eqref{eq:vi}. The resulting system of index-$1$ differential-algebraic equations (DAEs) is integrated on an equidistant time grid with $2^{14}$ intervals using the backward Euler method to resolve the pulses. For each time step $t_j$, we obtain a nonlinear system of the form $\Phi(\mathbf{u}_j)=\mathbf{g}_j$, with $\mathbf{u}^{\!\top}_j=(\mathbf{a}^{\!\top}, i)$ and where $\mathbf{a}$ is the vector of discrete vector potentials and $i$ is an approximation of the current. Considering all time steps at once results in a space-time system of the form $\mathcal{A}(\textbf{u}) = \textbf{g}$, where each block row corresponds to one time step, i.e., the nonlinear extension of equation \eqref{time step eq matrix}. This space-time system is solved using MGRIT V-cycles with a random initial guess, a  residual halting tolerance of $10^{-7}$ and factor-4 coarsening ($m=4$). The method is fully multilevel with the system on the coarsest grid consisting of four time points. 
For all spatial problems, Newton's method is used with a direct LU solver. For the experiments, we use the model \texttt{tube.fem} from the finite element package FEMM\cite{FEMM} and the Python framework PyMGRIT \cite{PyMGRIT,JHahne2021}. 

Figure \ref{fig:cable_conv} shows MGRIT convergence for the eddy current problem and various relaxation weights for FCF- and FCFCF-relaxation\footnote{We note that also for this problem thorough experimentation with under- and over-relaxation using a weight step-size of 0.1 was done.}. The results show that non-unitary weights improve MGRIT convergence for both relaxation schemes. For this particular problem, the best weight choice for FCF-relaxation of $\omega_C = 1.5$ yields a saving of one iteration, or 10\%, over a unitary weight choice. For degree-two relaxation, the experimentally optimal pair of weights $(\omega_C, \omega_{CC}) = (2.0, 0.9)$ even allows for a saving of two iterations, or 22\%, over a unitary weight choice of $(\omega_C, \omega_{CC}) = (1.0, 1.0)$. Again, as for the 2D advection-diffusion problem, the benefits of weighted relaxation on MGRIT convergence for this problem are similar to the benefits observed for the 1D heat equation in Section \ref{sec: Numerial veri bound}. For FCF-relaxation, the best weight choice for 1D diffusion of $\omega_C = 1.3$ results in slightly slower convergence for the 2D eddy current problem, compared to the weight $\omega_C=1.5$, but both weight choices allow for the same saving of one iteration over a unitary weight choice. For FCFCF-relaxation, the best weight choice of $(\omega_C, \omega_{CC}) = (2.0, 0.9)$ corresponds to the best weight choice for 1D diffusion. Thus again, the simple linear 1D model problem provides good guidance for choosing relaxation weights for a more complicated problem, particularly in choosing over- and/or under-relaxation. 
Lastly, comparing total runtimes of MGRIT with weighted FCF- and FCFCF-relaxation with the experimentally optimal weight choices of $\omega_C=1.5$ and $(\omega_C, \omega_{CC}) = (2.0, 0.9)$, respectively, FCF-relaxation is about 4 \% faster than FCFCF-relaxation. For this particular problem, MGRIT with weighted FCF-relaxation is the most efficient solver.

\begin{figure}[h!]
    \centering
    \begin{subfigure}[b]{0.46\textwidth}
    \includegraphics[width=\textwidth]{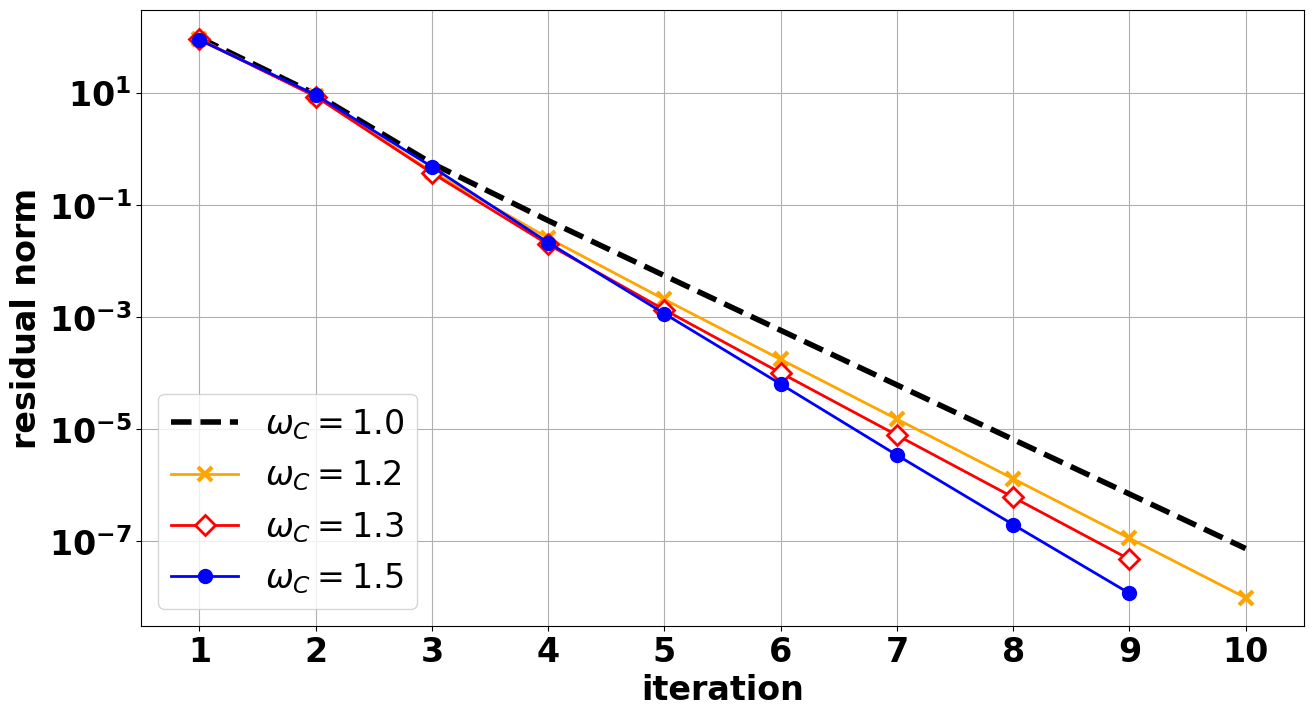}
    \caption{\normalsize FCF-relaxation}
    \label{fig:cable_conv_fcf}
    \end{subfigure}
     \begin{subfigure}[b]{0.46\textwidth}\qquad
    \includegraphics[width=\textwidth]{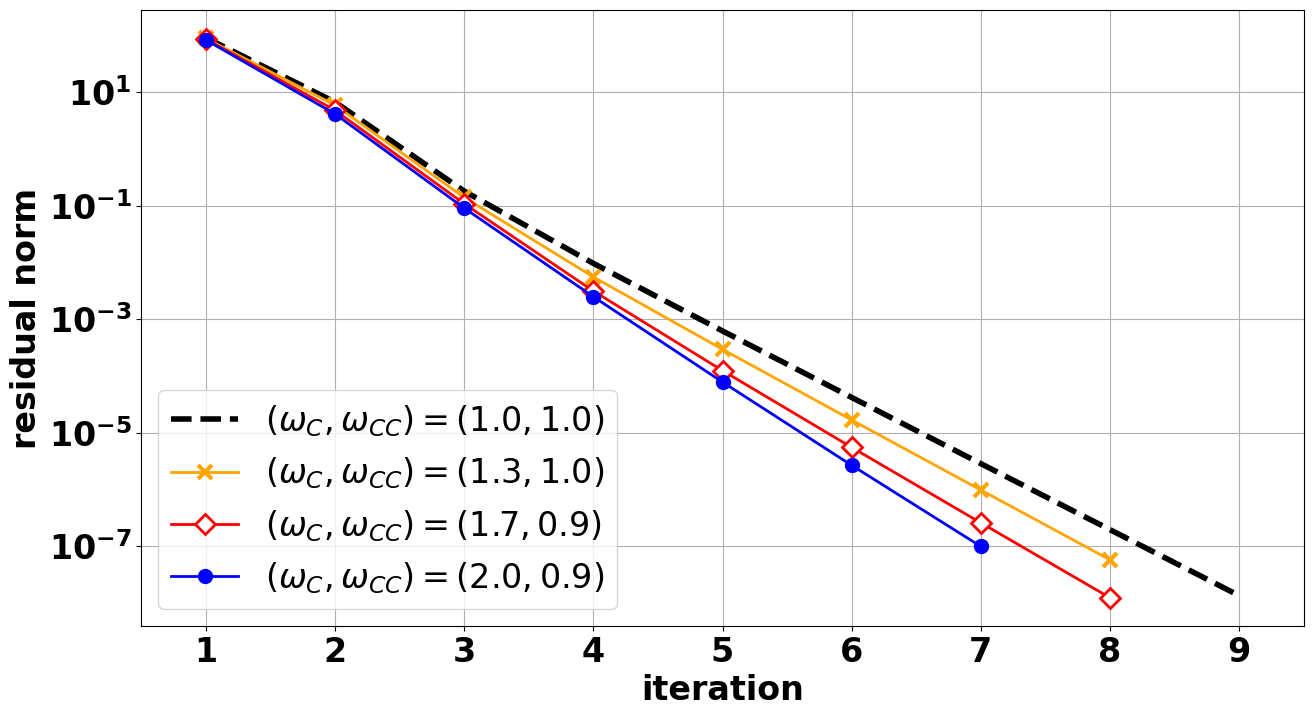}
    \caption{\normalsize FCFCF-relaxation}
    \label{fig:cable_conv_fcfcf}
    \end{subfigure}
    \caption{Experimental MGRIT convergence using weighted FCF- (left) and FCFCF-relaxation (right), $m=4$, and various 
    relaxation weights $\omega_C$ and $\omega_{CC}$ for the eddy current problem.\label{fig:cable_conv}}
\end{figure}


\section{Conclusions}
\label{sec:conc}
In this work, we introduced the concept of weighted relaxation to MGRIT, which until now has used only unweighted relaxation. We derived a new convergence analysis for linear two-grid MGRIT with degree-1 weighted-Jacobi relaxation, and used this analysis to guide and explore  the selection of relaxation weights. The theory was verified with simple numerical examples in \Cref{sec:verify}, and the utility of weighted relaxation was demonstrated on more complex problems in \Cref{sec:results}, including a 2D advection-diffusion problem and a 2D nonlinear eddy current problem. The simple linear 1D model problems from \Cref{sec: Numerial veri bound} provide useful guidance when choosing relaxation weights for more complicated linear and nonlinear problems, and are intended in part to guide future weight choices.

With an appropriate choice of weight, the numerical results demonstrated that MGRIT with weighted relaxation consistently offers improved convergence rates and lower iteration counts when compared with standard (unweighted) MGRIT, at almost no additional computational work. In most cases, weighted relaxation yields a 10\%--20\% savings in iterations, while for the A-stable scheme, the results show that under-relaxation can \emph{restore convergence} in some cases where unweighted relaxation does not converge.

\section*{Acknowledgments}
Los Alamos National Laboratory report number LA-UR-21-26114.







\bibliography{refs}%


\appendix

\section{Bound with FCFCF-Relaxation} \label{app1}
The derivation of the theoretical convergence bound for weighted FCFCF-relaxation (degree-two weighted-Jacobi) is shown in this section.
Remembering expression \eqref{eq:error propagator}, the error propagator for stand-alone weighted FCF-relaxation takes the form
\begin{equation} \label{ex:FCF}
\begin{aligned}
&(I-S(S^TAS)^{-1}S^TA)(I-\omega_C R^T_I(R_IAR^T_I)^{-1}R_IA)(I-S(S^TAS)^{-1}S^TA)\\
&= P(I - \omega_C \mathbf{A}_{\triangle})R_I .
\end{aligned}
\end{equation}
Applying expression \eqref{ex:FCF} twice, once with weight $\omega_C$ and once with another weight $\omega_{CC}$, the error propagator for stand-alone weighted FCFCF-relaxation can be expressed as
\begin{equation}
\begin{aligned} \label{eq:FCFCF}
P(I - \omega_{CC} \mathbf{A}_{\triangle})(I - \omega_C \mathbf{A}_{\triangle})R_I .
\end{aligned}
\end{equation}
Combining the effect of FCFCF-relaxation \eqref{eq:FCFCF} with the previous two-level error propagator \eqref{two level error FCF}, yields the following two-level MGRIT error propagator for FCFCF-relaxation
\begin{equation}
\begin{aligned}
&(I-PB^{-1}_{\triangle}R_IA) P(I - \omega_{CC} \mathbf{A}_{\triangle})(I - \omega_C \mathbf{A}_{\triangle})R_I \\
&= P(I-\mathbf{B}_{\triangle}^{-1}\mathbf{A}_{\triangle})(I - \omega_{CC} \mathbf{A}_{\triangle})(I - \omega_C \mathbf{A}_{\triangle})R_I
\end{aligned}.
\end{equation}
Simplifying the error propagator to consider only C-points yields 
\begin{equation}
\begin{aligned}
E_{\triangle, \hspace{1pt} \{\omega_C,\omega_{CC}\}}^{FCFCF} = (I-\mathbf{B}_{\triangle}^{-1}\mathbf{A}_{\triangle})(I - \omega_{CC} \mathbf{A}_{\triangle})(I - \omega_C \mathbf{A}_{\triangle})
\end{aligned}.
\end{equation}

Similar to Section \ref{sec:proof}, we next use the set of eigenvectors $\{v_\gamma\}$ and 
corresponding eigenvalues $\{\lambda_{\gamma}\}$ of $\Phi$ and  $\{\mu_{\gamma}\}$ of $\Phi_{\triangle}$ to diagonalize $E_{\triangle, \hspace{1pt} \{\omega_C,\omega_{CC}\}}^{FCFCF}$ with the block diagonal eigenvector matrix $\widetilde{U}$.  The resulting matrix $\widetilde{E}_{\triangle, \hspace{1pt} \{\omega_C,\omega_{CC}\}}^{FCFCF}$ is Toeplitz with the following asymptotic generating function,
\begin{align}
\mathcal{F}(x) & \coloneqq (\lambda^m_\gamma - \mu_\gamma)\left[
(1-\omega_{CC})(1-\omega_C)\sum_{\ell=1}^{\infty} \mu^{\ell-1}_\gamma e^{i\ell x} +\{\omega_{CC}(1-\omega_C)+\omega_C(1-\omega_{CC})\}\lambda^m_\gamma \sum_{\ell=2}^{\infty} \mu^{\ell-2}_\gamma e^{i\ell x} + \omega_{CC}\omega_C \lambda^{2m}_\gamma \sum_{\ell=3}^{\infty} \mu^{\ell-3}_\gamma e^{i\ell x}\right] \nonumber \\
& = e^{ix}(\lambda^m_\gamma - \mu_\gamma)\left[(1-\omega_{CC})(1-\omega_C)\sum_{\ell=0}^{\infty} (\mu_\gamma e^{i x})^{\ell} + 
e^{ix}\{\omega_{CC}(1-\omega_C)+\omega_C(1-\omega_{CC})\}\lambda^m_\gamma \sum_{\ell=0}^{\infty} (\mu_\gamma e^{i x})^{\ell} + e^{i2x}\omega_{CC}\omega_C \lambda^{2m}_\gamma \sum_{\ell=0}^{\infty} (\mu_\gamma e^{i x})^{\ell}\right] \nonumber \\
& = e^{ix} \frac{(\lambda^m_\gamma - \mu_\gamma)}{1 - e^{ix}\mu_\gamma}\left[(1-\omega_{CC})(1-\omega_C) + 
e^{ix}\{\omega_{CC}(1-\omega_C)+\omega_C(1-\omega_{CC})\}\lambda^m_\gamma + e^{i2x}\omega_{CC}\omega_C \lambda^{2m}_\gamma \right].
\end{align}

Again following Section \ref{sec:proof}, we bound the maximum singular value of $E_{\triangle, \hspace{1pt} \{\omega_C,\omega_{CC}\}}^{FCFCF}$ with
\begin{align}
\sigma_{max, \gamma}(\widetilde{E}_{\triangle, \hspace{1pt} \{\omega_C,\omega_{CC}\}}^{FCFCF}) & \leq \max_{x\in[0,2\pi]} |\mathcal{F}(x)| \nonumber \\
& = \max_{x\in[0,2\pi]} \frac{|\lambda^m_\gamma - \mu_\gamma|}{|1 - e^{ix}\mu_\gamma|}
|(1-\omega_{CC})(1-\omega_C) + 
e^{ix}\{\omega_{CC}(1-\omega_C)+\omega_C(1-\omega_{CC})\}\lambda^m_\gamma + e^{i2x}\omega_{CC}\omega_C \lambda^{2m}_\gamma| . \nonumber \\
\end{align}
Next by taking the maximum over $\gamma$, we have the following result, similar to Theorem \ref{th:bound},
\begin{equation*}
||E_{\triangle, \hspace{1pt} \{\omega_C,\omega_{CC}\}}^{FCFCF}||_{(\widetilde{U}\widetilde{U}^*)^{-1}} \leq \max_\gamma \max_{x\in[0,2\pi]} \frac{|\lambda^m_\gamma - \mu_\gamma|}{|1 - e^{ix}\mu_\gamma|}
|(1-\omega_{CC})(1-\omega_C) + 
e^{ix}\{\omega_{CC}(1-\omega_C)+\omega_C(1-\omega_{CC})\}\lambda^m_\gamma + e^{i2x}\omega_{CC}\omega_C \lambda^{2m}_\gamma| .
\end{equation*}
Finally, the approximation of the maximum over $x$ yields the theoretical convergence bound for weighted FCFCF-relaxation given in equation \eqref{bd FCFCF-relaxation},
\begin{align}
||E_{\triangle, \hspace{1pt} \{\omega_C,\omega_{CC}\}}^{FCFCF}||_{(\widetilde{U}\widetilde{U}^*)^{-1}} & \lessapprox \max_\gamma \frac{|\lambda^m_\gamma - \mu_\gamma|}{1 - |\mu_\gamma|}
|(1-\omega_{CC})(1-\omega_C) + \{\omega_{CC}(1-\omega_C)+\omega_C(1-\omega_{CC})\}|\lambda^m_\gamma| + \omega_{CC}\omega_C |\lambda^{2m}_\gamma|| \nonumber \\
& = \max_\gamma \frac{|\lambda_\gamma^m - \mu_\gamma|}{1 - |\mu_\gamma|}
|1-\omega_C + \omega_C|\lambda_\gamma^m||\, |1-\omega_{CC} + \omega_{CC}|\lambda_\gamma^m|| .
\end{align}

\clearpage
\noindent\large{\textbf{SUPPLEMENTAL MATERIALS}}
\setcounter{section}{0}
\renewcommand{\thesection}{S\arabic{section}}
\renewcommand{\thetable}{S\arabic{table}}
\renewcommand{\thefigure}{S\arabic{figure}}

\section{Max over $x$}

Here we derive a closed form for the maximum over $x$ that arises
in theoretical bounds to allow easier computation. Consider 
\begin{align}\label{eq:bound0}
\max_{x\in[0,2\pi]} \frac{|\lambda^k - \mu|}{1 - e^{ix}\mu}
	|1-\omega + e^{ix}\omega\lambda^k|.
\end{align}
This function is not differentiable due to the absolute values, but the
maximum is obtained at the same $x$ if we square the underlying function.
Noting that for complex $f$, $|f|^2 = ff^*$; thus, consider
\begin{align}
\max_{x\in[0,2\pi]} & |\lambda^k - \mu|^2
	\frac{(1-\omega + e^{ix}\omega\lambda^k)(1-\omega + e^{-ix}\omega(\lambda^*)^k)}
	{(1 - e^{ix}\mu)(1 - e^{-ix}\mu^*)} \nonumber\\
& = |\lambda^k - \mu|^2 \max_{x\in[0,2\pi]}
	\frac{(1-\omega + e^{ix}\omega\lambda^k)(1-\omega + e^{-ix}\omega(\lambda^*)^k)}
	{(1 - e^{ix}\mu)(1 - e^{-ix}\mu^*)} \nonumber\\
& = |\lambda^k - \mu|^2 \max_{x\in[0,2\pi]}
	\frac{(\omega - 1)^2 + \omega^2|\lambda^k|^2- 2\omega(\omega - 1)\Rea(\lambda^k)\cos(x) +
		2\omega(\omega - 1)\Ima(\lambda^k)\sin(x)}
	{1 + |\mu|^2 - 2\Rea(\mu)\cos(x) + 2\Ima(\mu)\sin(x)} \nonumber \\
& \coloneqq |\lambda^k - \mu|^2 \max_{x\in[0,2\pi]}
	\frac{C_\lambda - 2a\cos(x) + 2b\sin(x)}
	{C_\mu - 2c\cos(x) + 2d\sin(x)} \label{eq:frac}.
\end{align}
Note that by assumption $|\mu| < 1$, which implies $|1 - |\mu|| > 0$, and the
denominator of \eqref{eq:frac} is necessarily nonzero. Thus the function we are
maximizing is well-defined at all $x$ (i.e., has non zero denominator). To find
the maximum, we differentiate in $x$, where
\begin{align*}
\frac{\partial}{\partial x} \frac{C_\lambda - 2a\cos(x) + 2b\sin(x)}
	{C_\mu - 2c\cos(x) + 2d\sin(x)} 
& = \frac{2\sin(x)(aC_\mu - cC_\lambda) + 2\cos(x)(bC_\mu - dC_\lambda) + 4(ad -bc)}
	{(C_\mu - 2c\cos(x) + 2d\sin(x))^2}.
\end{align*}
To set the derivative equal to zero, we only need to worry about the numerator,
so we seek $x$ such that
\begin{align}\label{eq:sol-x}
\sin(x)(aC_\mu - cC_\lambda) + \cos(x)(bC_\mu - dC_\lambda) + 2(ad -bc) = 0.
\end{align}
Note if $\omega = 1$ (unweighted relaxation),
\begin{align}\label{eq:angle}
ad - bc = \omega(1-\omega)(\Rea(\lambda^k)\Ima(\mu) - \Ima(\lambda^k)\Rea(\mu)) = 0,
\end{align}
in which case we can directly compute the solution $x_0$ to \eqref{eq:sol-x}
via the arctangent. The perturbation term in \eqref{eq:angle} arises for
$\omega \neq 1$. If $\mu$ and $\lambda^k$ have the same angle in the complex plane
(i.e., $\mu = C\lambda^k$ for some constant $C$), \eqref{eq:angle} is also
zero, and we arrive at the same solution $x_0$ as when $\omega = 1$. More
generally, we need to account for the case that $\mu$ and $\lambda^k$ are
not the same direction in the complex plane. \emph{Mathematica} provides
the root as
\begin{align}\nonumber
x_0 & \coloneqq 2\arctan\left(\frac{a C_\mu - c C_\lambda \pm \sqrt{a^2 C_\mu^2 - 4 a^2 d^2 + 8 a b c d - 2 a c C_\lambda C_\mu - 4 b^2 c^2 + b^2 C_\mu^2 - 2 bd C_\lambda C_\mu + c^2 C_\lambda^2 + d^2C_\lambda^2 }}{-2(a d - b c) + b C_\mu - dC_\lambda}\right) \\
& = 2\arctan\left(\frac{a C_\mu - c C_\lambda \pm \sqrt{(aC_\mu - cC_\lambda)^2 + 
	(bC_\mu - dC_\lambda)^2 - 4(ad-bc)^2}}{-2(a d - b c) + b C_\mu - dC_\lambda}\right).\label{eq:max}
\end{align}

Now we want to evaluate \eqref{eq:frac} at our maximum, $x_0$. Note
that the maximum in \eqref{eq:max} takes the form $x_0 = 2\arctan(r)$ for
a certain $r$, and recall the identities
\begin{align*}
\cos(2\arctan(r)) = \frac{1-r^2}{1+r^2}, \hspace{5ex}
\sin(2\arctan(r)) = \frac{2r}{1+r^2}.
\end{align*}
Then from \eqref{eq:frac},
\begin{align}\nonumber
\frac{C_\lambda - 2a\cos(2\arctan(r)) + 2b\sin(2\arctan(r))}
	{C_\mu - 2c\cos(2\arctan(r)) + 2d\sin(2\arctan(r))}
& = \frac{C_\lambda - \frac{2a(1-r^2)}{1+r^2} + \frac{4br}{1+r^2}}
	{C_\mu - \frac{2c(1-r^2)}{1+r^2} + \frac{4dr}{1+r^2}} \nonumber\\
& = \frac{C_\lambda(1+r^2) - 2a(1-r^2) + 4br}
	{C_\mu(1+r^2) - 2c(1-r^2) + 4dr} \nonumber\\
& = \frac{(C_\lambda + 2a)r^2 + 4br + C_\lambda - 2a}
	{(C_\mu + 2c)r^2 + 4dr + C_\mu - 2c}.\label{eq:bound1}
\end{align}
Thus to compute the bound in \eqref{eq:bound0}, we first evaluate 
$r$ from \eqref{eq:max},
\begin{align}\label{eq:r}
r \coloneqq \frac{a C_\mu - c C_\lambda \pm \sqrt{(aC_\mu - cC_\lambda)^2 + 
	(bC_\mu - dC_\lambda)^2 - 4(ad-bc)^2}}{-2(a d - b c) + b C_\mu - dC_\lambda},
\end{align}
where 
\begin{align*}
a & = \omega(\omega-1)\Rea(\lambda^k), \\
b & = \omega(\omega-1)\Ima(\lambda^k), \\
c & = \Rea(\mu), \\
d & = \Ima(\mu), \\
C_\mu & = 1 + |\mu|^2 = 1 + c^2+d^2, \\
C_\lambda & = (\omega-1)^2 + \omega^2|\lambda^k|^2 = (\omega-1)^2 +
	\omega^2(\Rea(\lambda^k)^2 + \Ima(\lambda^k)^2).
\end{align*}
We then plug $r$ into \eqref{eq:bound1} and take the square root to
map from \eqref{eq:frac} to \eqref{eq:bound0}.

\section{One-Dimensional Model Problem Results} \label{app2}
This section thoroughly examines weighted-relaxation and MGRIT for three model problems, the 1D heat equation, the 1D advection equation with purely imaginary spatial eigenvalues, and the 1D advection equation with complex spatial eigenvalues.
For full multilevel experiments, V-cycles are used and we coarsen down to a grid of size 4 or less in time.  During searches in the weight-space for experimentally optimal weights, we use a step size of 0.1.  Other testing parameters are discussed below on a case-by-case basis.

Regarding notation, we introduce a level subscript to allow for level-dependent
weights, i.e., $\omega_{C,\ell=k}$ is the weight used on level $k$.  If the
level subscript is omitted, then the weight is uniform across all levels.
For example, $\omega_{C,\ell=0}$ represents the relaxation weight for the first
application of C-relaxation on the finest level 0, and $\omega_{CC,\ell=1}$
represents the relaxation weight for the second application of C-relaxation
(degree two weighted-Jacobi) on the first coarse level 1.

\subsection{One-dimensional heat equation}
\label{sec:results_heat}
We consider the one-dimensional heat equation subject to an initial condition and homogeneous Dirichlet boundary conditions,
\begin{align}
\begin{split}
\frac{\partial u}{\partial t} - \alpha \frac{\partial^2 u}{\partial x^2} &= f(x,t), \hspace{10pt} \alpha > 0, \hspace{10pt} x \in \Omega = [0, L], \hspace{10pt} t \in [0, T], \\
u(x, 0) &= u_0(x), \hspace{10pt} x \in \Omega, \\
u(x, t) &= 0, \hspace{10pt} x \in \partial \Omega, \hspace{10pt} t \in [0, T].
\end{split}
\end{align}
We transform the model problem to a system of ODEs of the form (\ref{ODE}) by using second-order central differencing for discretizing the spatial derivative and then a standard one-step method (backward Euler) of the form (\ref{time step eq}) for discretizing the time derivative.  We call this the \textit{Backward Time, Central Space} or BTCS scheme, which yields
\begin{equation}
\mathbf{u}_j = (I - \delta_t G)^{-1} \mathbf{u}_{j-1} + (I - \delta_t G)^{-1} \delta_t \mathbf{f}_j, \hspace{10pt} j=1,2,...,N_t,
\end{equation}
where the linear operator G in (\ref{ODE}) is the three-point stencil $\frac{\alpha}{h_x^2} [1, -2, 1]$. In 
the form of (\ref{time step eq}), $\Phi = (I - \delta_t G)^{-1}$ and $\mathbf{g}_j = (I - \delta_t G)^{-1} 
\delta_t \mathbf{f}_j $.
The eigenvalues of $\Phi$ and $\Phi^m$ are computed using the eigenvalues of G, i.e.,
$$\kappa_{\gamma} = -\frac{4}{h_x^2} \sin^2 \Big(\frac{\gamma \pi}{2(N_x + 1)}\Big),$$
for $\gamma = 1, 2, ..., N_x$, 
which in turn allows for the computation of the theoretical convergence estimate (\ref{sharper bd}).
For more details on our computation of $\kappa_{\gamma}$, see the work \cite{Do2016}.

The following functions with the given domains are used for numerical experiments,
\begin{subequations}
	\begin{align*}
	u(x, t) &= \sin(\pi x)\cos(t), \\
	f(x,t) &= \sin(\pi x) [\sin(t) - \pi^2 \cos(t)], \\
	\alpha = 1&, \hspace{10pt} x \in [0, 1], \hspace{10pt} t \in [0, 0.625].
	\end{align*}
\end{subequations}
The residual norm halting tolerance for MGRIT is set to $ 10^{-10} / \sqrt{h_x \delta_t}$. 
Reported convergence rates are taken as an average over the last 5 MGRIT
iterations, where $\| r_k\|_2 / \| r_{k-1} \|_2$ is the convergence rate at iteration $k$ and $r_k$ is the residual from equation \eqref{time step eq matrix} at iteration $k$.
The combination of grid points in space $N_x$ and time $N_t$ are chosen so that 
a $\frac{\delta_t}{h_x^2}  = 12.8$.  This value was chosen to be of moderate magnitude and 
consistent with other MGRIT literature, namely the work \cite{Do2016}.

\subsubsection{Weighted FCF- and FCFCF-relaxation}
\label{sec:relax_heat}
We start by considering the two-level method for weighted FCF- and FCFCF-relaxation, i.e., degree-one and 
degree-two relaxation, respectively. Here, the search for the experimentally optimal pair of weights for FCFCF-relaxation and $m=2$ is 
depicted in Figure \ref{fig:Heat FCFCF Two Level}, where $(\omega_{C}, \omega_{CC}) = (1.7, 0.9)$ is the 
point corresponding to the minimal experimental 
convergence rate.  The search space of possible weights is  
$0 \leq \omega_{C}, \omega_{CC} \leq 2.0$, and is based on a more expansive preliminary search.  
A similar study was done in the thesis \cite{Su2019_v2} for 
FCF-relaxation and found that $\omega_{C} = 1.3$ is the point where the minimal 
convergence rate is reached.

\begin{figure}[h!]
    \centering
    \begin{subfigure}[b]{0.4\textwidth}
    \includegraphics[width=\textwidth]{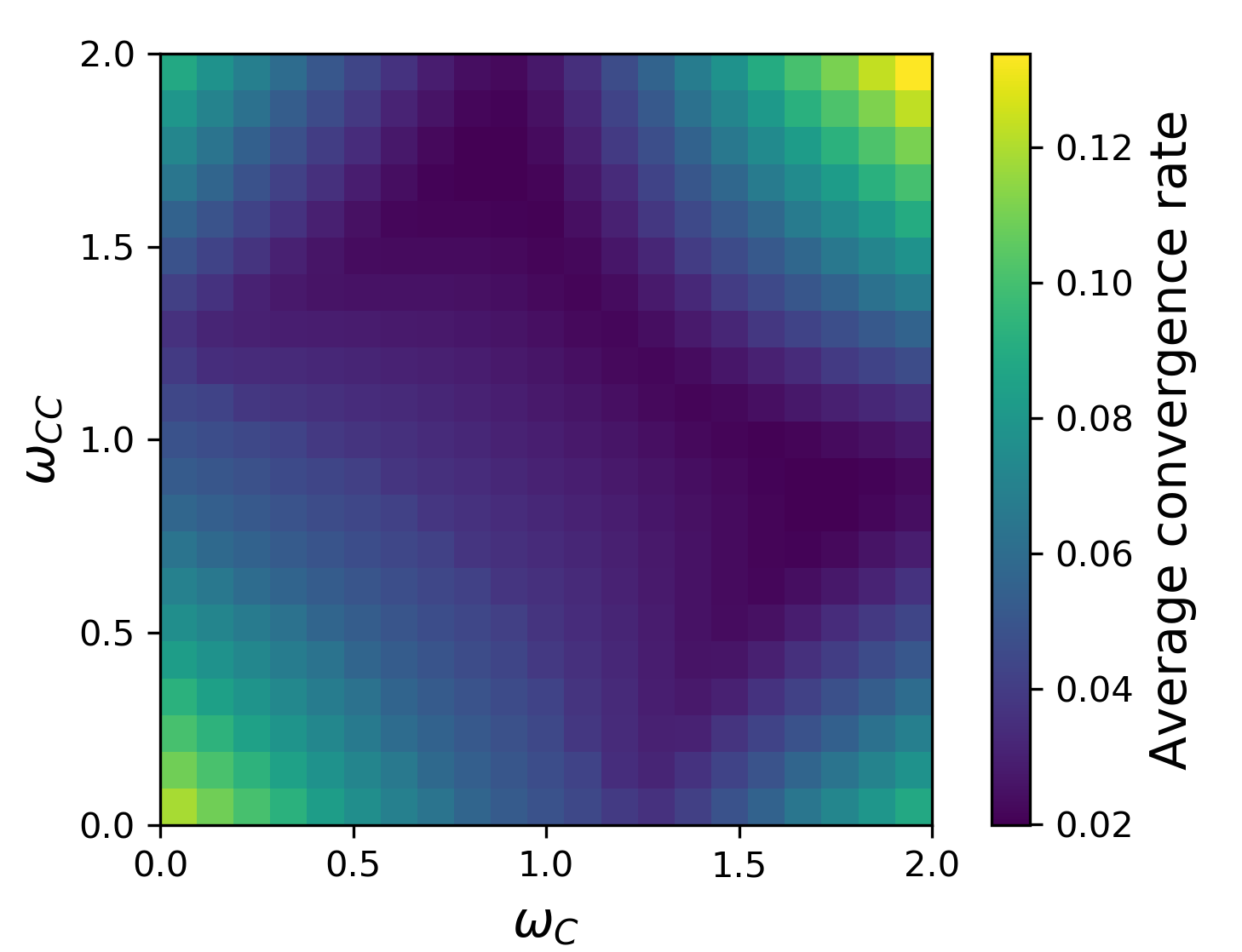}
    \caption{\normalsize Convergence Rate}
    \label{fig:Heat FCFCF Two Level Conv}
    \end{subfigure}
     \begin{subfigure}[b]{0.38\textwidth}
    \includegraphics[width=\textwidth]{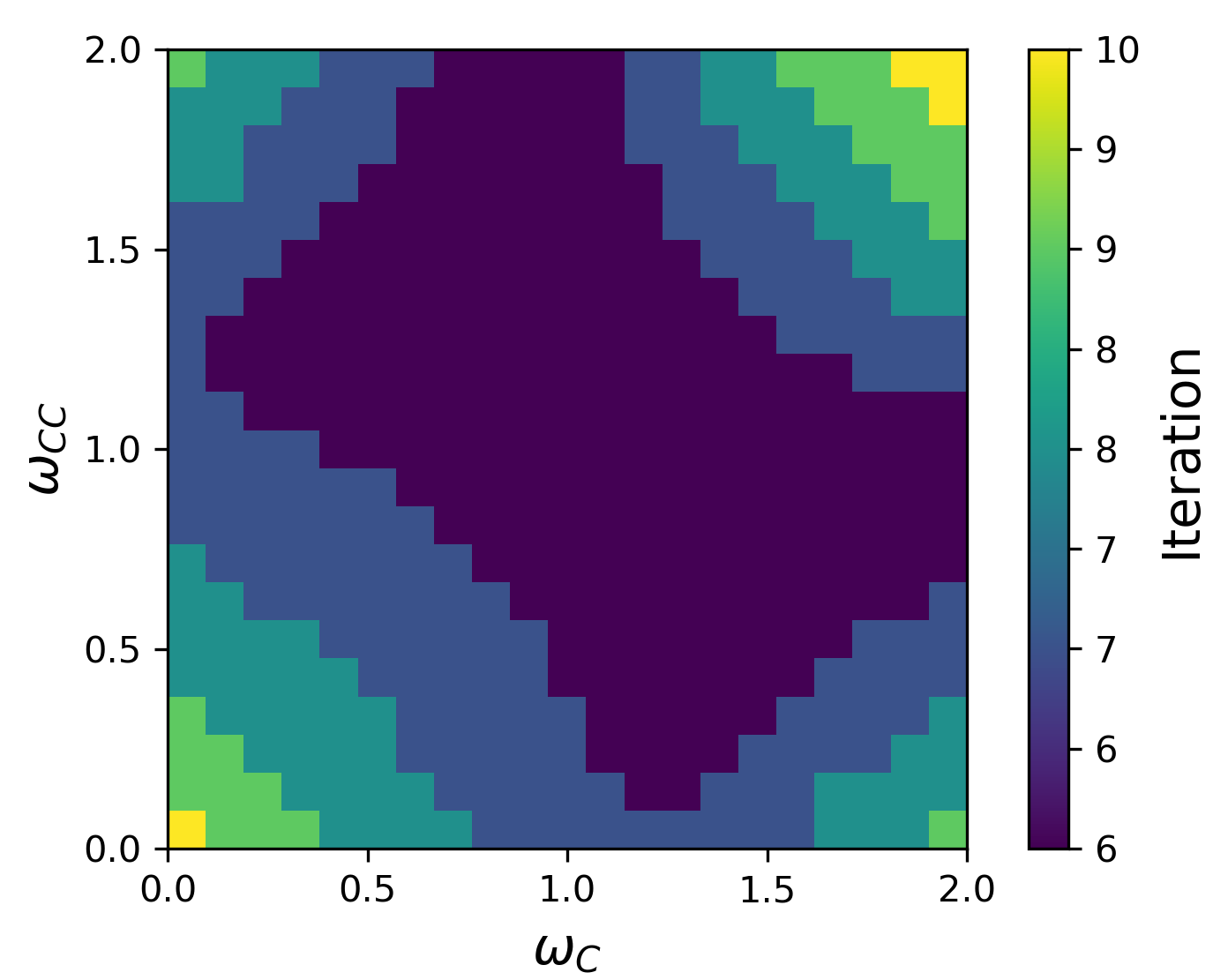}
    \caption{\normalsize Iterations}
    \label{fig:Heat FCFCF Two Level Iter}
    \end{subfigure}
    \caption{Two-level MGRIT experimental convergence rates (left) and iteration counts (right) using FCFCF-relaxation and various 
    relaxation weights $\omega_C$ and $\omega_{CC}$ for the 1D heat equation, coarsening factor $m = 2$, and grid sizes $(N_x, N_t) = (291, 4097)$.}
    \label{fig:Heat FCFCF Two Level}
\end{figure}

Table \ref{tab:Heat Conv and Iter Two level} depicts the convergence rate and iterations
for the two-level case.  Each table entry is formatted as \textit{convergence rate (iterations)}.
The experimentally optimal weights for FCFCF-relaxation $(\omega_{C}, \omega_{CC}) = (1.7, 0.9)$, found using 
$(N_x, N_t) = (291, 4097)$ and $m=2$ above, is highlighted in bold.  This weight choice leads to a saving 
of  1 MGRIT iteration, or 16\%, over unitary weights and FCFCF-relaxation on the largest problem.  
The best weight choice for FCF-relaxation of $\omega_C = 1.3$ yields a saving of 1 iteration, or 14\%, over a unitary weight choice (i.e., $\omega_C = 1.0$) on the largest problem.
At the bottom of the table, we examine whether the experimentally optimal weights for FCF- and FCFCF-relaxation carry over to another 
coarsening factor choice, $m = 16$, and find that this is largely the case.

Table \ref{tab:Heat Conv and Iter Multi level} repeats these experiments for a full multilevel method.
We see that the best two-level choice for FCFCF-relaxation of $(1.7, 0.9)$ still
performs well, but no longer yields the fastest convergence.  Another search of the weight-space for the 
multilevel case yielded
the experimentally optimal pair of weights $(\omega_{C}, \omega_{CC}) = (2.0, 0.9)$ when $m=2$, which allows for 
saving 1 iteration.  The uniform weight choice of $\omega_C = 1.3$ for FCF-relaxation 
continues to save 1 iteration.

Regarding cost, we can say that the cost of relaxation is the dominant cost of each V-cycle \cite{Fa2014}, thus a V-cycle with $m=2$ and 
FCFCF-relaxation has a cost of about $1.66\times$ when compared to a V-cycle using FCF-relaxation.  Furthermore, we can then say that the use of weighted relaxation with FCF-relaxation is the most efficient solver depicted, as the number of iterations (8) for the largest problem size in Table \ref{tab:Heat Conv and Iter Multi level} and weighted FCF-relaxation is noticeably less than 1.66 times the number of iterations for weighted FCFCF-relaxation ($1.66*6 \approx 10$).

\begin{table}[h!]
\centering
\begin{tabular}{c r|c|c|c|c}
    
     & $N_x \times N_t$ & $291 \times 4097$ & $411 \times 8193$ & $581 \times 16385$ & $821 \times 32769$ \\ \toprule
     \multirow{3}{*}{$m=2$} & $\omega_C=1.0$                      & 0.049 (7) & 0.048 (7) & 0.039 (7) & 0.039 (7) \\ 
     & $1.3$                                                      & 0.036 (7) & 0.036 (7) & 0.034 (6) & 0.034 (6) \\ 
     & $1.5$                                                      & 0.048 (7) & 0.049 (7) & 0.049 (7) & 0.049 (7) \\ \midrule
     \multirow{4}{*}{$m=2$} & $(\omega_C,\omega_{CC})=(1.0, 1.0)$ & 0.029 (6) & 0.029 (6) & 0.029 (6) & 0.028 (6) \\ 
     & $(1.3, 1.0)$                                               & 0.025 (6) & 0.024 (6) & 0.024 (6) & 0.023 (6) \\ 
     & $\textbf{(1.7, 0.9)}$                                      & 0.020 (6) & 0.020 (6) & 0.019 (6) & 0.016 (5) \\ 
     & $(2.0, 0.9)$                                               & 0.023 (6) & 0.023 (6) & 0.023 (6) & 0.023 (6) \\  \midrule
     \multirow{2}{*}{$m=16$} & $\omega_C=1.0$                     & 0.101 (9) & 0.099 (8) & 0.099 (8) & 0.099 (8) \\ 
     & $1.3$                                                      & 0.074 (8) & 0.075 (8) & 0.075 (8) & 0.074 (8) \\  \midrule
     \multirow{4}{*}{$m=16$} & $(\omega_C,\omega_{CC})=(1.0, 1.0)$& 0.056 (7) & 0.060 (7) & 0.060 (7) & 0.060 (7) \\ 
     & $(1.3, 1.0)$                                               & 0.049 (7) & 0.053 (7) & 0.053 (7) & 0.053 (7) \\ 
     & $(1.7, 0.9)$                                               & 0.041 (6) & 0.042 (6) & 0.041 (6) & 0.040 (6) \\ 
     & $(2.0, 0.9)$                                               & 0.042 (6) & 0.042 (6) & 0.042 (6) & 0.042 (6) \\ \bottomrule
     \end{tabular}
\caption{Two-level MGRIT convergence rates (iterations) for the 1D heat equation and weighted FCF- and FCFCF-relaxation.}
\label{tab:Heat Conv and Iter Two level}
\end{table}

\begin{table}[h!]
\centering
\begin{tabular}{c r|c|c|c|c}
    
     & $N_x \times N_t$ & $291 \times 4097$ & $411 \times 8193$ & $581 \times 16385$ & $821 \times 32769$ \\   \toprule
     \multirow{2}{*}{$m=2$} & $\omega_C=1.0$                      & 0.118 (9) & 0.121 (9) & 0.123 (9) & 0.125 (9) \\ 
     & $1.3$                                                      & 0.092 (8) & 0.095 (8) & 0.096 (8) & 0.096 (8) \\  \midrule
     \multirow{4}{*}{$m=2$} & $(\omega_C,\omega_{CC})=(1.0, 1.0)$ & 0.065 (7) & 0.066 (7) & 0.067 (7) & 0.068 (7) \\ 
     & $(1.3, 1.0)$                                               & 0.057 (7) & 0.058 (7) & 0.059 (7) & 0.059 (7) \\ 
     & $(1.7, 0.9)$                                               & 0.048 (7) & 0.049 (7) & 0.049 (7) & 0.049 (7) \\ 
     & $\textbf{(2.0, 0.9)}$                                      & 0.032 (6) & 0.032 (6) & 0.032 (6) & 0.032 (6) \\  \midrule
     \multirow{2}{*}{$m=16$} & $\omega_C=1.0$                     & 0.101 (9) & 0.099 (8) & 0.098 (8) & 0.098 (8) \\ 
     & $1.3$                                                      & 0.071 (8) & 0.068 (7) & 0.067 (7) & 0.067 (7) \\  \midrule
     \multirow{4}{*}{$m=16$} & $(\omega_C,\omega_{CC})=(1.0, 1.0)$& 0.056 (7) & 0.060 (7) & 0.060 (7) & 0.060 (7) \\ 
     & $(1.3, 1.0)$                                               & 0.048 (7) & 0.053 (7) & 0.052 (7) & 0.052 (7) \\ 
     & $(1.7, 0.9)$                                               & 0.037 (6) & 0.040 (6) & 0.039 (6) & 0.038 (6) \\ 
     & $(2.0, 0.9)$                                               & 0.041 (6) & 0.041 (6) & 0.041 (6) & 0.041 (6) \\  \bottomrule
\end{tabular}
\caption{Multilevel MGRIT convergence rates (iterations) for the 1D heat equation and weighted FCF- and FCFCF-relaxation.}
\label{tab:Heat Conv and Iter Multi level}
\end{table}

\subsubsection{Multilevel weights for C-relaxation}
\label{sec:Multilevel weights for C-relaxation}

We now consider the effect of level-dependent FCF-relaxation weights on MGRIT.  Weighted FCFCF-relaxation is not 
considered because it is not as efficient as FCF, as discussed in Section \ref{sec:relax_heat}, and the search 
space quickly becomes prohibitive.  Thus, the search for the experimentally optimal pair of weights for 
three-level MGRIT with FCF-relaxation and $m=2$ is depicted in Figure 
\ref{fig: Heat1D Multilevel Weight Three-level}, where $(\omega_{C,\ell=0}, \omega_{C,\ell=1})=(1.0, 2.0)$ 
is the point corresponding to the minimal convergence rate.

\begin{figure}[h!]
    \centering
    \begin{subfigure}[b]{0.4\textwidth}
    \includegraphics[width=\textwidth]{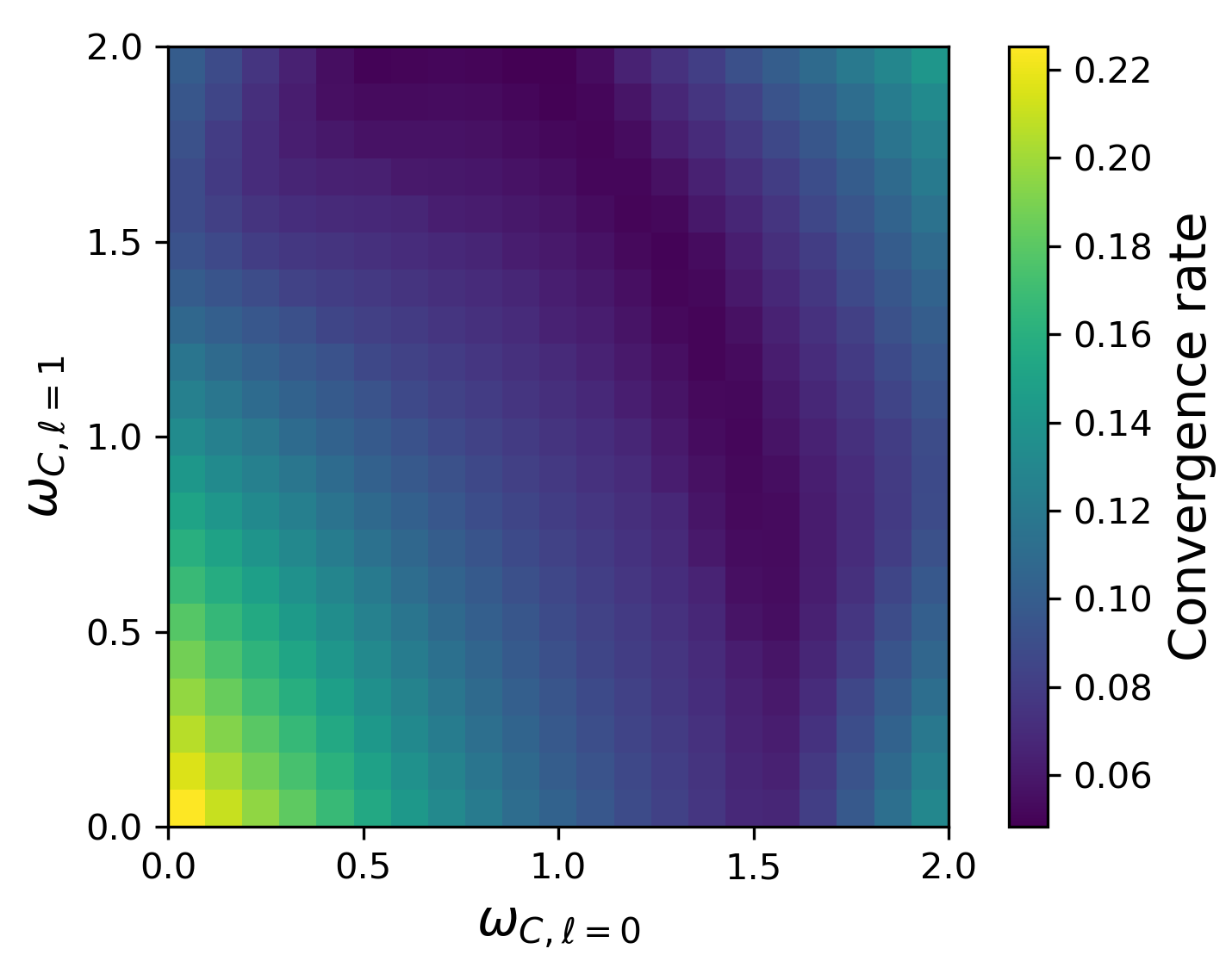}
    \caption{\normalsize Convergence Rate}
    \end{subfigure}
     \begin{subfigure}[b]{0.4\textwidth}
    \includegraphics[width=\textwidth]{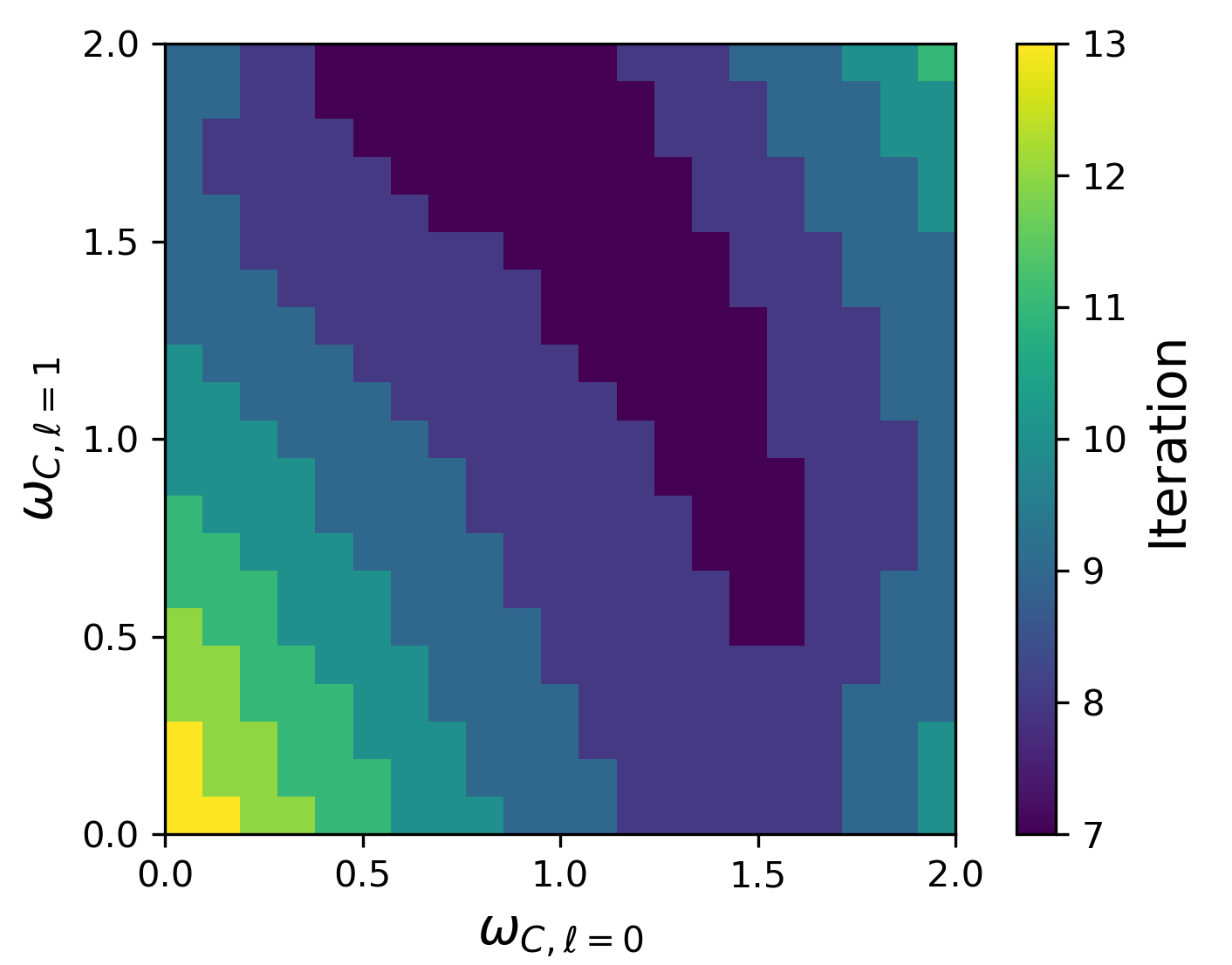}
    \caption{\normalsize Iterations}
    \end{subfigure}
    \caption{Three-level MGRIT experimental convergence rates (left) and iteration counts (right) using level-dependent FCF-relaxation weights $\omega_{C,\ell=0}$ and $\omega_{C,\ell=1}$ for the 1D heat equation, coarsening factor $m=2$, and grid size $(N_x, N_t) = (291, 4097)$.}
    \label{fig: Heat1D Multilevel Weight Three-level}
\end{figure}

Next, we move to a four-level method while keeping fixed the experimentally optimal weights found in Figure 
\ref{fig: Heat1D Multilevel Weight Three-level} and search only for the weight on level three (the second
coarse grid), $\omega_{C,\ell=2}$.  The search for $\omega_{C,\ell=2}$ is depicted in Figure 
\ref{fig: Heat1D Multilevel Weight Four-level}, and the trio of experimentally optimal weights is 
found to be $(\omega_{C,\ell=0}, \omega_{C,\ell=1}, \omega_{C,\ell=2})=(1.0, 2.0, 1.7)$ when $m=2$.

\begin{figure}[h!]
    \centering
    \begin{subfigure}[b]{0.4\textwidth}
    \includegraphics[width=\textwidth]{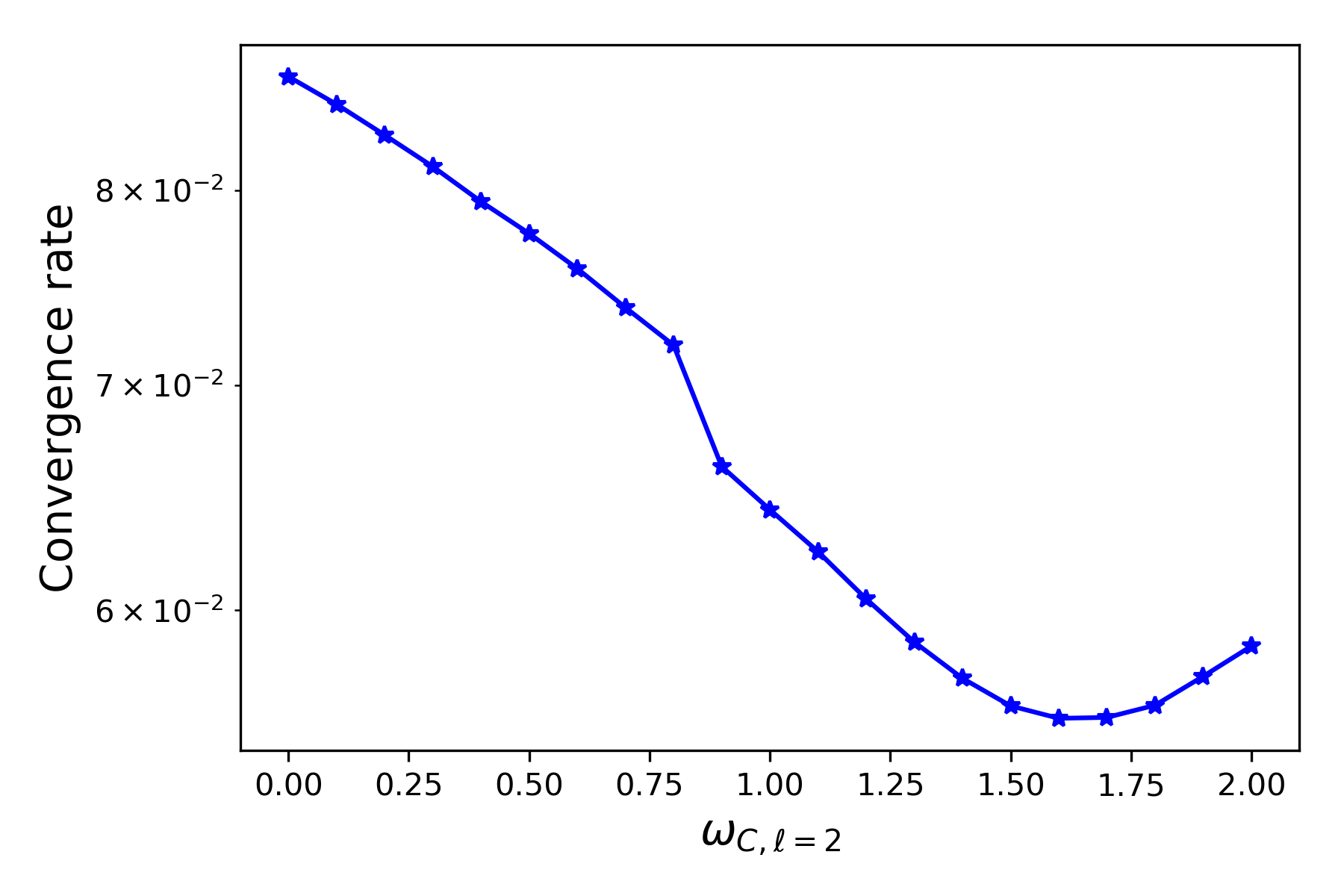}
    \caption{\normalsize Convergence Rate}
    \end{subfigure}
     \begin{subfigure}[b]{0.4\textwidth}
    \includegraphics[width=\textwidth]{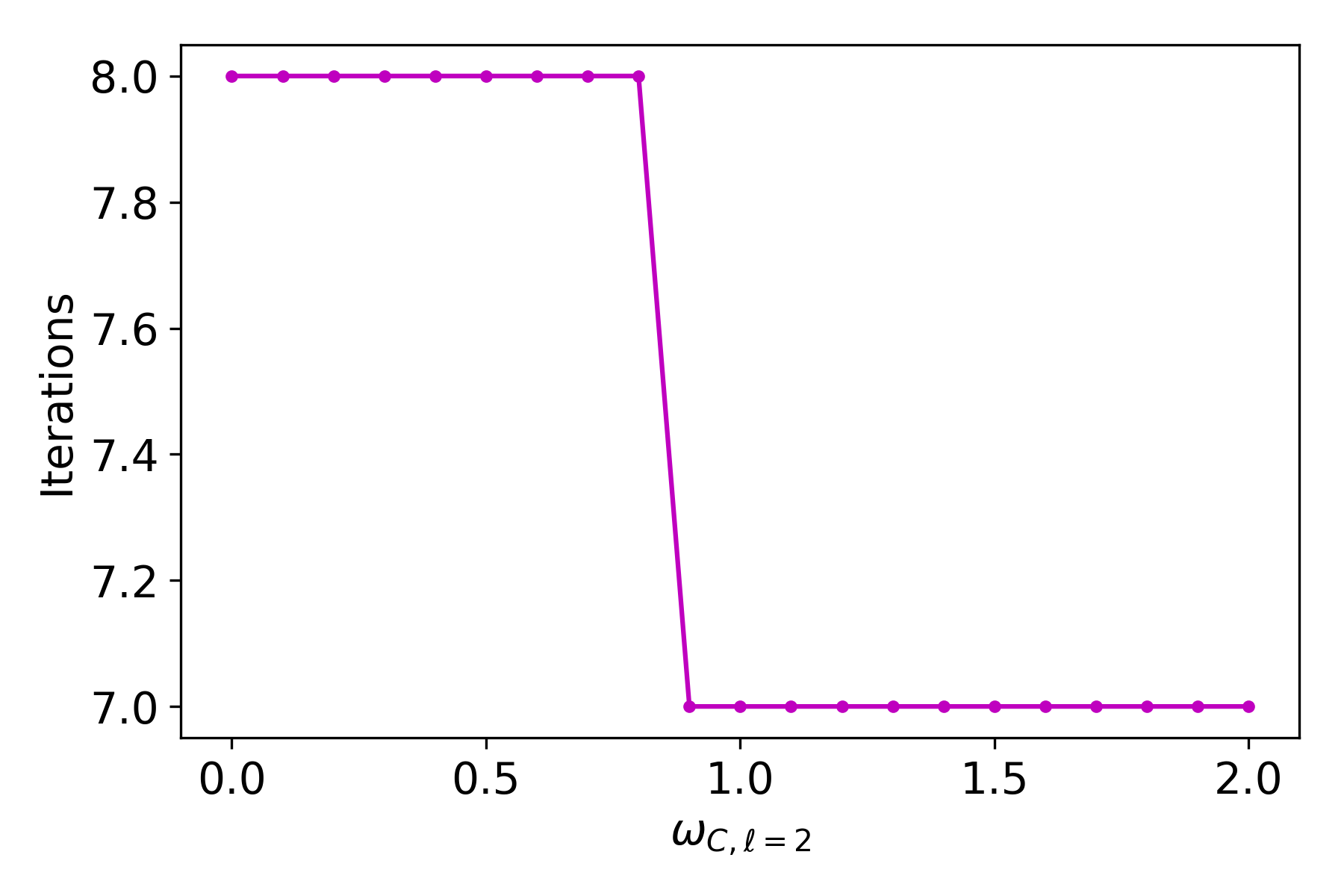}
    \caption{\normalsize Iterations}
    \end{subfigure}
    \caption{Four-level MGRIT convergence rates (left) and iteration counts (right) using FCF-relaxation, as we search 
    for the best level-three relaxation weight $\omega_{C,\ell=2}$, with the fixed values of
    $(\omega_{C,\ell=0}, \omega_{C,\ell=1})=(1.0, 2.0)$ on the first two levels.
    The problem is the 1D heat equation, coarsening factor $m=2$, and grid size $(N_x, N_t) = (291, 4097)$.} 
    \label{fig: Heat1D Multilevel Weight Four-level}
\end{figure}

Table \ref{tab:vary_weight_heat} depicts the convergence rate and iterations for level 
dependent weights, comparing the experimentally ``best" choice of 
$(\omega_{C,\ell=0}, \omega_{C,\ell=1}, \omega_{C,\ell=2})=(1.0, 2.0, 1.7)$ against unitary weights and the best uniform
weight choice of $\omega_C=1.3$.  Level dependent weights provide only a very modest improvement in convergence rate 
with $m=2$ and no benefit in iteration count over the best uniform weight choice of $\omega_C=1.3$.  Additionally, the selected level dependent weights do not 
translate to improved performance for $m=16$, as shown at the bottom of the table.  Thus, we conclude that 
level independent weights for problems similar to the heat equation are likely sufficient.

\begin{table}[h!]
\centering
\begin{tabular}{c r|c|c|c|c}
     & $N_x \times N_t$ & $291 \times 4097$ & $411 \times 8193$ & $581 \times 16385$ & $821 \times 32769$ \\ \toprule
     \multirow{3}{*}{$m=2$} & $(\omega_{C,\ell=0}, \omega_{C,\ell=1}, \omega_{C,\ell=2}) = (1.0, 1.0, 1.0)$  & 0.090 (8) & 0.090 (8) & 0.090 (8) & 0.090 (8) \\ 
     &$\textbf{(1.0, 2.0, 1.7)}$                                                                             & 0.056 (7) & 0.056 (7) & 0.056 (7) & 0.056 (7) \\ 
     &$(1.3, 1.3, 1.3)$                                                                                      & 0.069 (8) & 0.069 (8) & 0.063 (7) & 0.062 (7) \\ \midrule
     \multirow{3}{*}{$m=16$} & $(\omega_{C,\ell=0}, \omega_{C,\ell=1}, \omega_{C,\ell=2}) = (1.0, 1.0, 1.0)$ & 0.101 (9) & 0.099 (8) & 0.098 (8) & 0.098 (8) \\ 
     &$(1.0, 2.0, 1.7)$                                                                                      & 0.087 (8) & 0.086 (8) & 0.087 (8) & 0.087 (8) \\ 
     &$(1.3, 1.3, 1.3)$                                                                                      & 0.071 (8) & 0.068 (7) & 0.067 (7) & 0.067 (7) \\ \bottomrule
     \end{tabular}
    \caption{Four-level MGRIT convergence rates (iterations) for the 1D heat equation with level-dependent weights.}
\label{tab:vary_weight_heat}
\end{table}



\subsubsection{Varying $\delta_t$ experiment}
\label{sec:deltat_diffusion}
Lastly, for the one-dimensional heat equation, we explore the question of why weighted relaxation offers a 
significantly larger convergence benefit for multilevel MGRIT than for two-level MGRIT (compare Tables
\ref{tab:Heat Conv and Iter Two level} and \ref{tab:Heat Conv and Iter Multi level}).  In particular, 
we are interested if the progressively larger $\delta_t$ on coarse grids drives the improved 
performance for weighted relaxation in a multilevel setting.  Thus, Table \ref{tab:heat_vary_dt} depicts the
two-level MGRIT convergence rate for various fine-grid $\delta_t$ values that mimic the $\delta_t$ values 
encountered with $m=2$ on coarse MGRIT levels, when a final time of $0.625$ is used and $N_t=16385$ (i.e., the largest problem from Tables \ref{tab:Heat Conv and Iter Two level} and \ref{tab:Heat Conv and Iter Multi level}).  To further mimic the coarse levels in 
MGRIT, $N_t$ adapts with $\delta_t$, so that the final time is unchanged, e.g., when $\delta t$ has been 
multiplied by 16 in Table \ref{tab:heat_vary_dt}, $N_t$ decreases by a factor of 16 from 4096 to 256.  
However, as evidenced in the table, no MGRIT dependence on $\delta_t$ for 
weighted-relaxation is found, so we conclude that a more complication multilevel interaction is driving
the improved benefit of weighted-relaxation in the multilevel case.

\begin{table}[h!]
\centering
\begin{tabular}{r | c|c|c|c|c}
     $\delta_t$         & $3.81e^{-5}$ & $2 \cdot 3.81e^{-5}$  & $4 \cdot 3.81e^{-5}$  & $8 \cdot 3.81e^{-5}$  & $16 \cdot 3.81e^{-5}$ \\  \toprule        
     Iterations         & 6     & 7     & 7     & 7     & 7     \\
     Convergence Rate   & 0.034 & 0.036 & 0.036 & 0.036 & 0.036 
\end{tabular}
\caption{Two-level MGRIT with $\omega_{C}=1.3$ and $m=2$ for various fine-grid $\delta_t$ values for the 1D heat equation.}
\label{tab:heat_vary_dt}
\end{table}

\subsection{One-dimensional linear advection equation with purely imaginary spatial eigenvalues}
\label{sec:results_adv_imag} 
We now consider the one-dimensional linear advection equation subject to an initial condition and periodic boundary conditions,
\begin{align}
\begin{split}
\frac{\partial u}{\partial t} - \alpha \frac{\partial u}{\partial x} &= 0, 
\hspace{10pt} \alpha > 0, \hspace{10pt} x \in \Omega = [0, L], \hspace{10pt} t \in [0, T], \\
u(x, 0) &= u_0(x), \hspace{10pt} x \in \Omega, \\
u(0, t) &= u(L, t), \hspace{10pt} t \in [0, T].
\end{split}
\end{align}
If we apply the BTCS scheme, we obtain
\begin{equation*}
\mathbf{u}_j = (I - \delta_t G)^{-1} \mathbf{u}_{j-1}, \hspace{10pt} j=1,2,...,N_t,
\end{equation*}
where the linear operator G from (\ref{ODE}) is the two-point stencil $\frac{\alpha}{2h_x} [-1, 0, 1]$. Here, $\Phi = (I - \delta_t G)^{-1}$ and $\mathbf{g}_j = 0$. Similar to the heat equation, the eigenvalues of $\Phi$ and $\Phi^m$ are computed from the eigenvalues of $G$, i.e., 
$$\kappa_{\gamma} = \frac{i}{h_x} \sin \left(\frac{2 \pi \gamma}{N_x}\right),$$
for $\gamma = 1, 2, ..., N_x$, 
which in turn allows for the computation of the theoretical convergence estimate (\ref{sharper bd}).

The following function with the given domain is used for numerical experiments, 
\begin{subequations}
\begin{align}
u(x, t) = e^{-25((x - t) - 0.5)^2}, \label{eq:LA_eqn1} \\
\alpha = 1, \hspace{10pt} x \in [0, 1], \hspace{10pt} t \in [0, 1]  \label{eq:LA_eqn2}.
\end{align}
\end{subequations}
The function is chosen as a standard test problem that satisfies the spatially periodic boundary conditions.
The MGRIT residual norm halting tolerance is set to $ 10^{-8} / \sqrt{h_x \delta_t}$ and the maximum allowed
iterations is set to $70$, because some cases will fail to quickly converge. 
Reported convergence rates are taken as $( \| r_k \|_2 / \| r_0 \|_2 )^{1/k}$ at the final iteration $k$, where $r_i$ is the residual from equation \eqref{time step eq matrix} at iteration $i$.  The combination of grid points in space $N_x$ and time $N_t$ are chosen so that $\frac{\delta_t}{h_x} = 0.5$.

\subsubsection{Weighted FCF- and FCFCF-relaxation}
We again start by considering the two-level method for weighted FCF- and FCFCF-relaxation. The search for 
the experimentally optimal pair of weights for FCFCF-relaxation and $m=2$ is depicted in Figure 
\ref{fig: AdvcC FCFCF-relaxation}, where $(\omega_C, \omega_{CC}) = (1.0, 2.3)$ is the point corresponding to the minimal
convergence rate.  The search space of weights is widened to $0 \le \omega_C, \omega_{CC} \le 3$, because 
a more expansive preliminary search indicated this was a reasonable range. A similar 
study was done in the thesis \cite{Su2019_v2} for FCF-relaxation and found that 
$\omega_{C} = 1.8$ is the point where the minimal convergence rate is reached.

Table \ref{tab:LA Conv and Iter for Twolevel} depicts the convergence rate and iterations for 
the two-level case.  The experimentally optimal pair of weights for FCFCF-relaxation $(\omega_C, \omega_{CC}) = (1.0, 2.3)$, found
in Figure \ref{fig: AdvcC FCFCF-relaxation}, is highlighted in bold, and this choices leads to saving 1 iteration, or 
7\% over unitary weights and FCFCF-relaxation on the largest problem.  The best weight choice for FCF-relaxation of $\omega_C = 1.8$
yields a saving of 1 iteration, or 7\%, over a unitary weight choice on the largest problem. 
At the bottom of the table, we examine whether the experimentally optimal weights carry over 
to another coarsening factor, $m=4$, and find that this is not the case, in contrast to the heat equation.  
MGRIT for advection problems is typically sensitive to changes in $m$ (as opposed to the heat equation) \cite{Do2016, HoDeFaMaSc2019}, hence we do not consider $m=16$ or other large coarsening factors.

Table \ref{tab:LA Conv and Iter for Multi level} repeats these experiments for a full multilevel method.
We see that the best two-level choice for FCFCF-relaxation of 
$(\omega_C, \omega_{CC}) = (1.0, 2.3)$ fails to provide a benefit for larger problems 
in the multilevel setting.  Thus, we carry out 
another search in the weight-space and find that $(\omega_C, \omega_{CC}) = (2.3, 0.6)$ (in bold) yields 
the fastest convergence when $m=2$, saving 25\% of the iterations over unitary weights 
$(\omega_C, \omega_{CC}) = (1.0, 1.0)$ on the largest problem.  A search in the weight-space
for FCF-relaxation yielded the best convergence rate when $\omega_C = 1.5$, 
saving 22\% of the iterations on the second largest problem.  At the bottom of the table, we 
show that the best weight choices for $m=2$ do not carry over to $m=4$.  The choice of $\omega_C =
1.4$ for FCF-relaxation is depicted to illustrate the performance for the best weight choice found in 
that case.

Overall, we note that linear advection is traditionally difficult for MGRIT \cite{Do2016, HoDeFaMaSc2019}, so 
while these iteration counts with experimentally optimal weights are not scalable, we view any significant 
improvement in convergence as an important step.

\begin{figure}[h!]
    \centering
    \begin{subfigure}[b]{0.4\textwidth}
    \includegraphics[width=\textwidth]{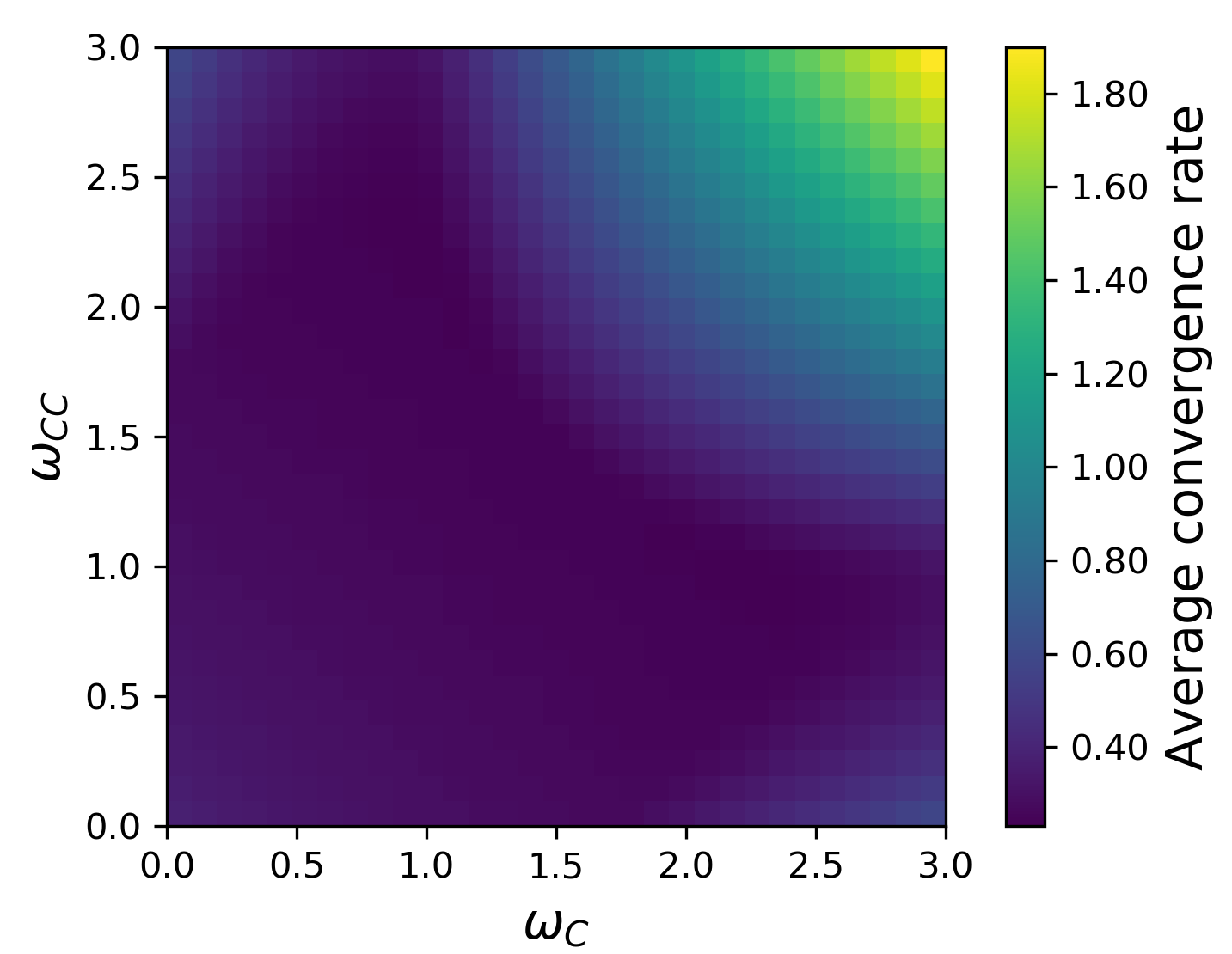}
    \caption{\normalsize Convergence Rate}
    \end{subfigure}
     \begin{subfigure}[b]{0.4\textwidth}
    \includegraphics[width=\textwidth]{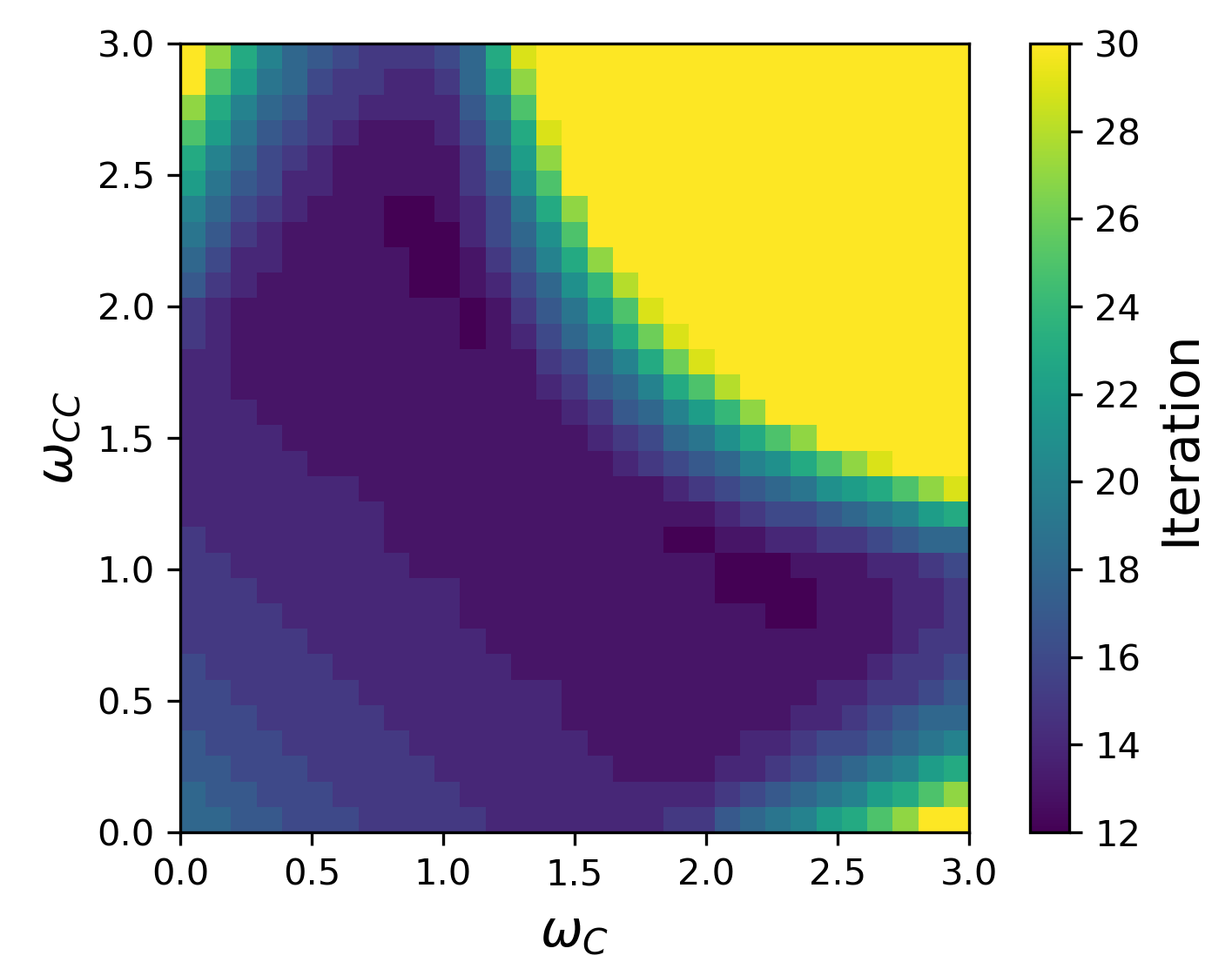}
    \caption{\normalsize Iterations}
    \end{subfigure}
    \caption{Two-level MGRIT experimental convergence rates (left) and iteration counts (right) using FCFCF-relaxation and various 
    relaxation weights $\omega_C$ and $\omega_{CC}$ for the 1D linear advection equation, coarsening factor $m=2$, and grid size $(N_x, N_t) = (1025, 1025)$.}
    \label{fig: AdvcC FCFCF-relaxation}
\end{figure}

\begin{table}[h!]
\centering
\begin{tabular}{l r|c|c|c|c}
     
     & $N_x \times N_t$ & $513 \times 513$ & $1025 \times 1025$ & $2049 \times 2049$ & $4097 \times 4097$ \\ \toprule
     \multirow{2}{*}{$m=2$} & $\omega_C = 1.0$                   & 0.304 (15) & 0.307 (15) & 0.308 (15) & 0.309 (15) \\ 
     & $1.8$                                                     & 0.280 (14) & 0.282 (14) & 0.284 (14) & 0.285 (14) \\ \midrule
     \multirow{4}{*}{$m=2$} &$(\omega_C,\omega_{CC})=(1.0, 1.0)$ & 0.263 (13) & 0.266 (13) & 0.268 (13) & 0.278 (14) \\ 
     & $(1.8, 1.0)$                                              & 0.249 (13) & 0.254 (13) & 0.257 (13) & 0.257 (13) \\ 
     & $\textbf{(1.0, 2.3)}$                                     & 0.237 (12) & 0.250 (13) & 0.251 (13) & 0.252 (13) \\ 
     & $(2.3, 0.6)$                                              & 0.238 (12) & 0.254 (13) & 0.256 (13) & 0.256 (13) \\ \midrule
     \multirow{2}{*}{$m=4$} & $\omega_C =1.0$                    & 0.564 (30) & 0.607 (34) & 0.617 (35) & 0.619 (35) \\ 
     & $1.8$                                                     & 0.763 (63) & 0.777 (67) & 0.780 (68) & 0.780 (68) \\ 
     & $1.5$                                                     & 0.568 (30) & 0.581 (31) & 0.591 (32) & 0.596 (33) \\ \midrule
     \multirow{4}{*}{$m=4$} & $(\omega_C,\omega_{CC})=(1.0, 1.0)$& 0.473 (23) & 0.537 (27) & 0.557 (29) & 0.566 (30) \\ 
     & $(1.5, 1.0)$                                              & 0.448 (21) & 0.511 (25) & 0.537 (27) & 0.546 (28) \\ 
     & $(1.0, 2.3)$                                              & 0.655 (40) & 0.675 (43) & 0.679 (44) & 0.680 (44) \\ 
     & $(2.3, 0.6)$                                              & 0.643 (38) & 0.660 (41) & 0.663 (41) & 0.664 (41) \\ \bottomrule
\end{tabular}
\caption{Two-level MGRIT convergence rates (iterations) for the 1D linear advection equation and weighted FCF- and FCFCF-relaxation.}
\label{tab:LA Conv and Iter for Twolevel}
\end{table}

\begin{table}[h!]
\centering
\begin{tabular}{c r|c|c|c|c}
    &$N_x \times N_t$ & $513 \times 513$ & $1025 \times 1025$ & $2049 \times 2049$ & $4097 \times 4097$ \\ \toprule 
    \multirow{2}{*}{$m=2$} &   $\omega_C = 1.0$                 & 0.560 (30) & 0.675 (44) & 0.771 (67) & 0.854 (> 100) \\ 
    & $1.5$                                                     & 0.495 (24) & 0.606 (35) & 0.718 (52) & 0.810 (82) \\ \midrule
    \multirow{4}{*}{$m=2$}& $(\omega_C,\omega_{CC})=(1.0, 1.0)$ & 0.464 (23) & 0.576 (32) & 0.678 (45) & 0.765 (64) \\ 
    & $(1.5, 1.0)$                                              & 0.423 (20) & 0.542 (29) & 0.646 (40) & 0.738 (57) \\ 
    & $(1.0, 2.3)$                                              & 0.452 (22) & 0.605 (35) & 0.744 (59) & 0.858 (>100) \\ 
    & $\textbf{(2.3, 0.6)}$                                     & 0.390 (19) & 0.492 (25) & 0.603 (34) & 0.696 (48) \\ \midrule
    \multirow{2}{*}{$m=4$} & $\omega_C = 1.0$                   & 0.581 (32) & 0.666 (42) & 0.757 (61) & 0.838 (95) \\ 
    & $1.4$                                                     & 0.535 (27) & 0.611 (34) & 0.712 (50) & 0.802 (77) \\ \midrule
    \multirow{4}{*}{$m=4$}& $(\omega_C,\omega_{CC})=(1.0, 1.0)$ & 0.476 (23) & 0.577 (31) & 0.677 (43) & 0.774 (66) \\ 
                                                                & $(1.4, 1.0)$ & 0.448 (22) & 0.544 (28) & 0.643 (39) & 0.752 (60) \\ 
                                                                & $(1.0, 2.3)$ & 0.658 (41) & 0.683 (44) & 0.761 (63) & 0.884 (>100) \\ 
                                                                & $(2.3, 0.6)$ & 0.607 (34) & 0.640 (38) & 0.758 (62) & 0.860 (>100) \\ \bottomrule
\end{tabular}
\caption{Multilevel MGRIT convergence rates (iterations) for the 1D linear advection equation and weighted FCF- and FCFCF-relaxation.}
\label{tab:LA Conv and Iter for Multi level}
\end{table}

\subsubsection{Multilevel weights for C-relaxation}
\label{sec:ml_w_LA}
We again consider the effect of level-dependent FCF-relaxation weights on MGRIT, similar to the heat equation.  
Weighted FCFCF-relaxation is again not considered due to its cost and size of search space. Thus, the search
for the experimentally optimal pair of weights for three-level MGRIT with FCF-relaxation and $m=2$ is depicted
in Figure \ref{fig: AdvcC Multilevel Weight Three-level}, where $(\omega_{C,\ell=0}, \omega_{C,\ell=1})=(1.3, 2.0)$
is the point corresponding to the minimal convergence rate.

\begin{figure}[h!]
    \centering
    \begin{subfigure}[b]{0.4\textwidth}
    \includegraphics[width=\textwidth]{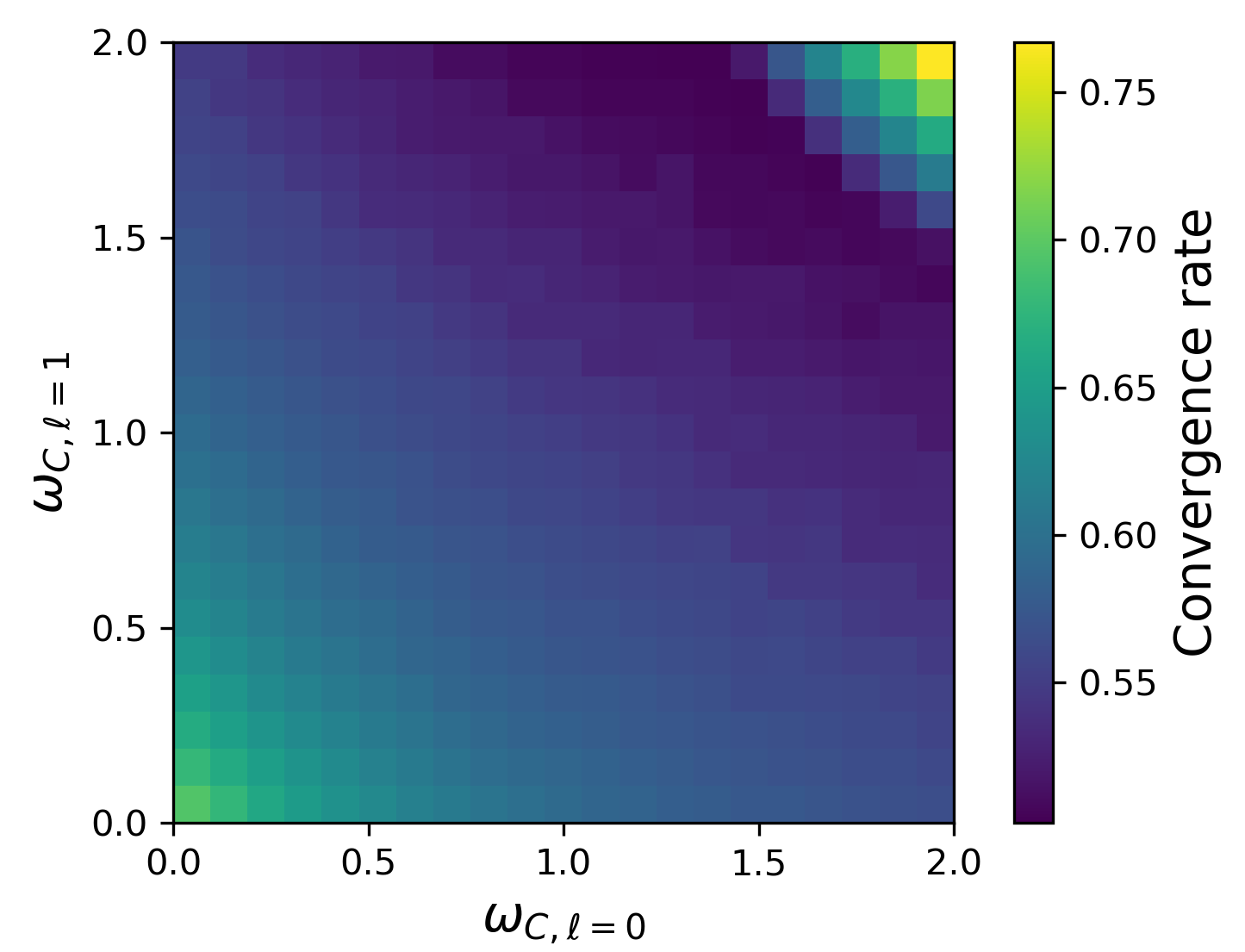}
    \caption{\normalsize Convergence Rate}
    \end{subfigure}
     \begin{subfigure}[b]{0.38\textwidth}
    \includegraphics[width=\textwidth]{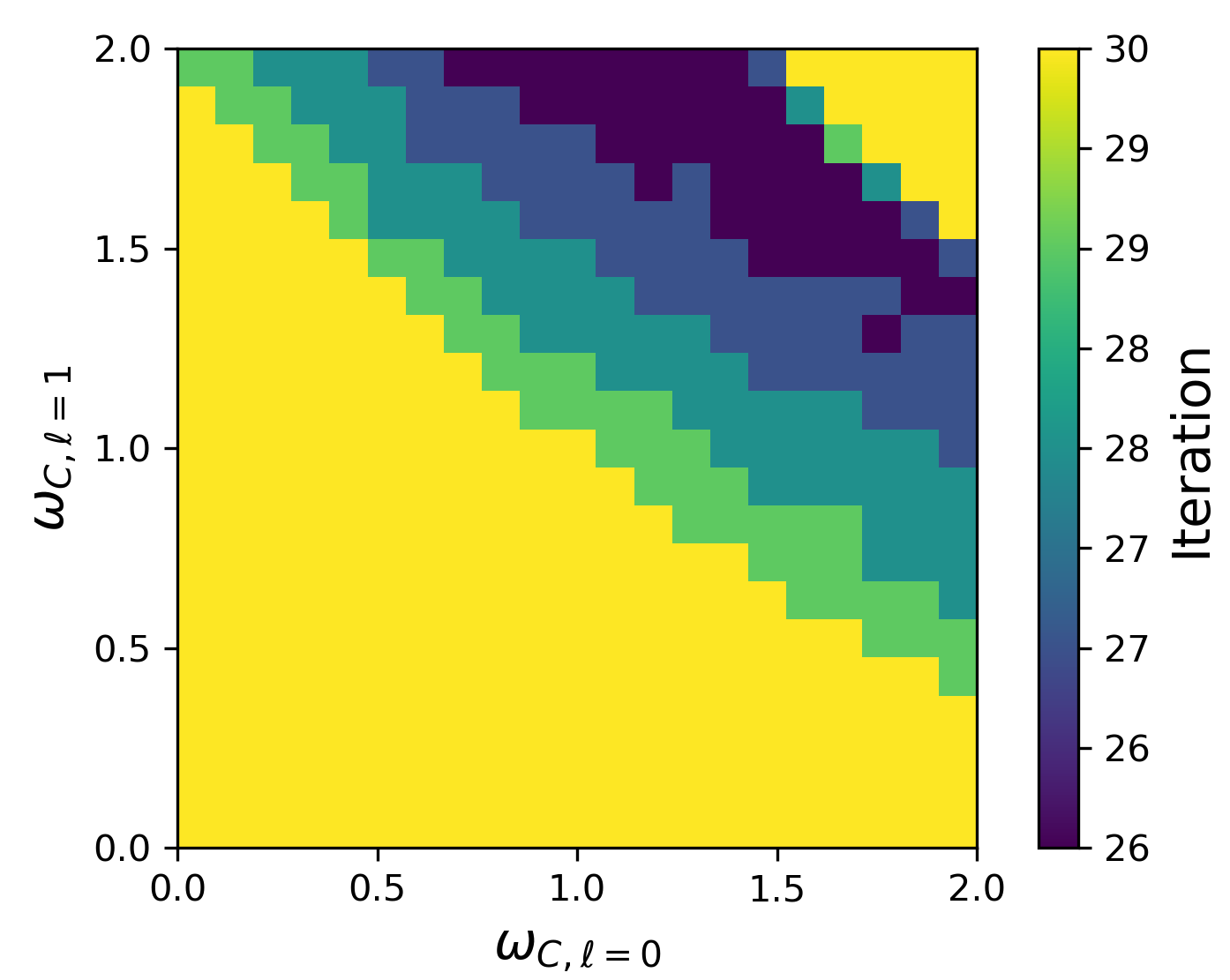}
    \caption{\normalsize Iterations}
    \end{subfigure}
    \caption{Three-level MGRIT experimental convergence rates (left) and iteration counts (right) using various level-dependent FCF-relaxation weights $\omega_{C,\ell=0}$ and $\omega_{C,\ell=1}$ for the 1D linear advection equation, coarsening factor $m=2$, and grid size $(N_x, N_t) = (1025, 1025)$.} 
    \label{fig: AdvcC Multilevel Weight Three-level}
\end{figure}

Next, we move to a four-level method while keeping fixed the experimentally optimal weights found in 
Figure \ref{fig: AdvcC Multilevel Weight Three-level} and search only for the weight on level three
(the second coarse grid), $\omega_{C,\ell=2}$.  This search is depicted in Figure 
\ref{fig: Advc1D_C Multilevel Weight Four-level} and the trio of experimentally optimal
weights is found to be $(\omega_{C,\ell=0}, \omega_{C,\ell=1}, \omega_{C,\ell=2})=(1.3, 2.0, 1.7)$ when $m=2$.

\begin{figure}[h!]
    \centering
    \begin{subfigure}[b]{0.4\textwidth}
    \includegraphics[width=\textwidth]{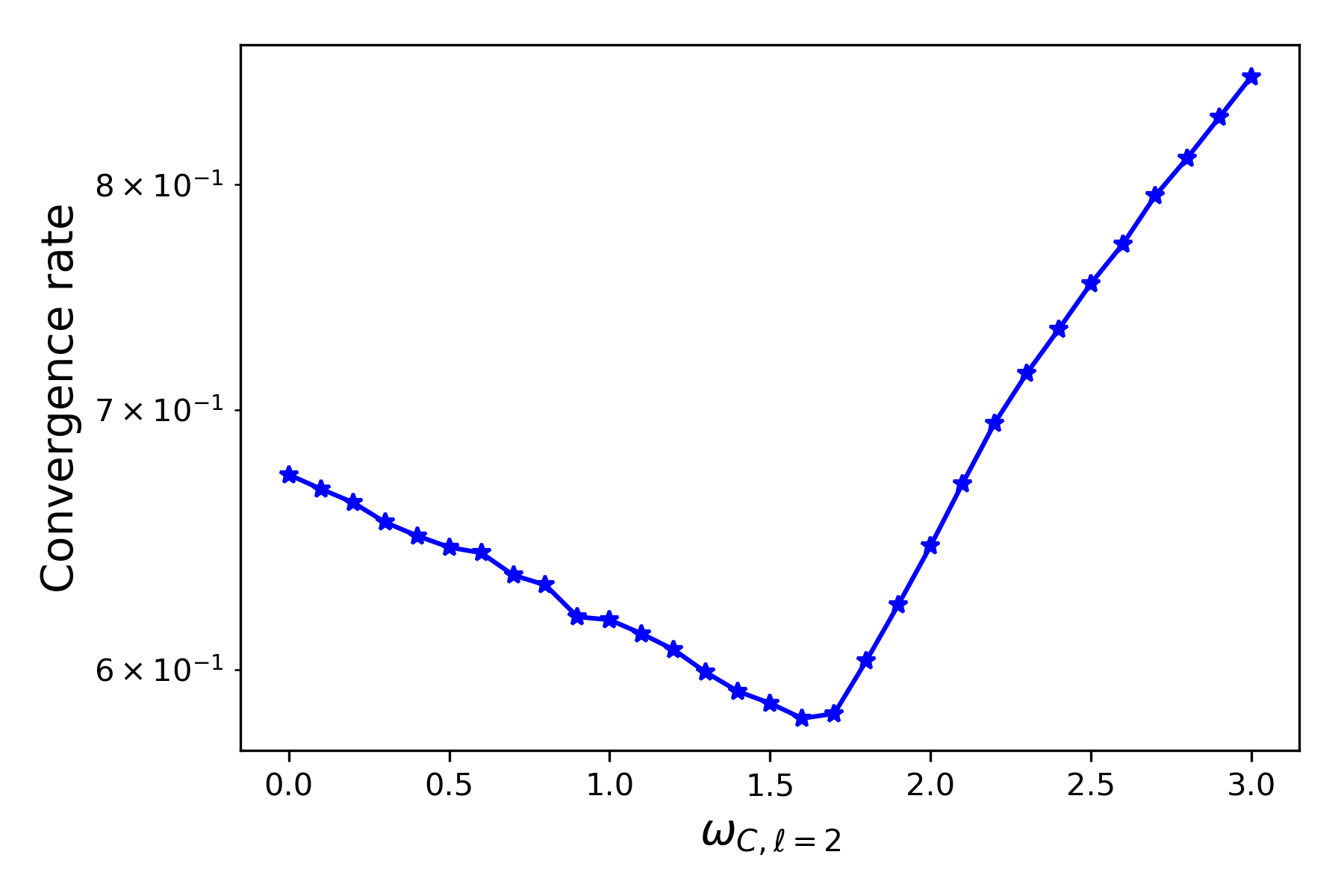}
    \caption{\normalsize Convergence Rate}
    \end{subfigure}
     \begin{subfigure}[b]{0.4\textwidth}
    \includegraphics[width=\textwidth]{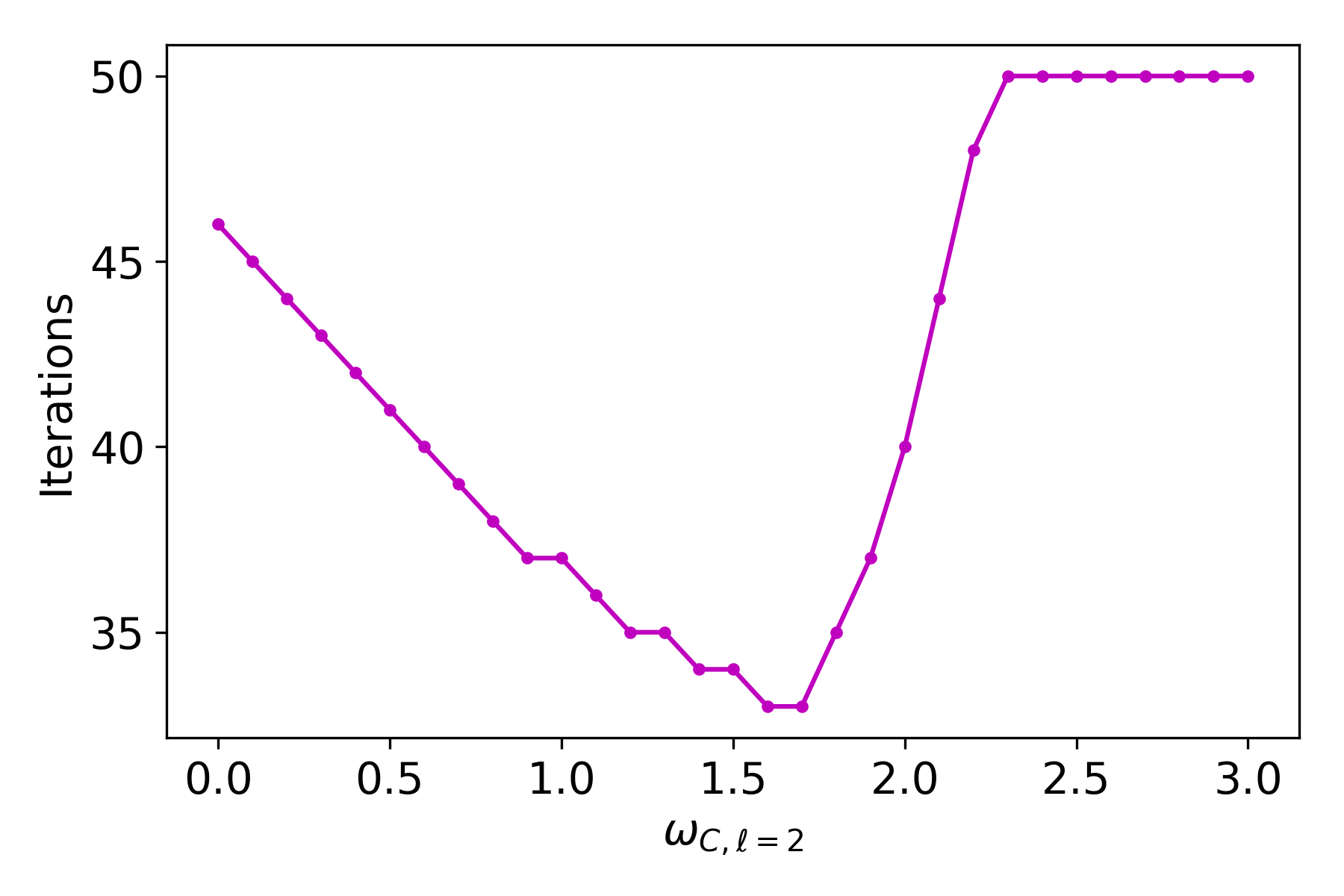}
    \caption{\normalsize Iterations}
    \end{subfigure}
    \caption{Four-level MGRIT convergence rates (left) and iteration counts (right) using FCF-relaxation, as we search 
    for the best level-three relaxation weight $\omega_{C,\ell=2}$, with the fixed values of $(\omega_{C,\ell=0}, \omega_{C,\ell=1})=(1.3, 2.0)$ on the first two levels. The problem is the 1D linear advection equation, coarsening factor $m=2$, and grid size $(N_x, N_t) = (1025, 1025)$. The maximum allowed iterations is 50.}
    \label{fig: Advc1D_C Multilevel Weight Four-level}
\end{figure}

Table \ref{tab:varyweight_LA} depicts the convergence rate and iterations for level dependent weights, 
comparing the experimentally ``best" choice of $(\omega_{C,\ell=0}, \omega_{C,\ell=1}, \omega_{C,\ell=2})=(1.3, 2.0, 1.7)$
against unitary weights and the best uniform weight choice of $\omega_C=1.5$.  Level dependent weights provide only a 
modest improvement in convergence, but it is a larger improvement than observed for the heat equation, where no iterations were 
saved.  Here, only 3 iterations (4.7\%) are saved for $m=2$, when compared to the best uniform weight choice of $\omega_C = 1.5$.
At the bottom of the table, we show how this expensive weight optimization procedure does not carry over to 
another coarsening factor of $m=4$, and instead show that a uniform weight choice of $\omega_C = 1.4$ still provides a 
substantial improvement in convergence.  We conclude that for this problem, level-dependent weights do not 
offer much improvement for convergence and come at the high cost of finding weights.

\begin{table}[h!]
\centering
\begin{tabular}{c r|c|c|c|c}
    & $N_x \times N_t$ & $513 \times 513$ & $1025 \times 1025$ & $2049 \times 2049$ & $4097 \times 4097$ \\ \toprule 
   \multirow{3}{*}{$m=2$}  &   $(\omega_{C,\ell=0}, \omega_{C,\ell=1}, \omega_{C,\ell=2})=(1.0, 1.0, 1.0)$ & 0.562 (31) & 0.670 (43) & 0.749 (60) & 0.788 (72)  \\ 
    & $\textbf{(1.3, 2.0, 1.7)}$ & 0.584 (32) & 0.591 (33) & 0.695 (47) & 0.754 (61) \\ 
    & $(1.5, 1.5, 1.5)$ & 0.485 (24) & 0.609 (35) & 0.710 (51) & 0.764 (64) \\ \midrule 
   \multirow{3}{*}{$m=4$} & $(\omega_{C,\ell=0}, \omega_{C,\ell=1}, \omega_{C,\ell=2})=(1.0, 1.0, 1.0)$ & 0.579 (31) & 0.670 (42) & 0.755 (61) & 0.838 (96) \\ 
    & $(1.3, 2.0, 1.7)$ & 0.545 (28) & 0.673 (44) & 0.794 (76) & 0.983 (>100) \\ 
    & $(1.4, 1.4, 1.4)$ & 0.535 (27) & 0.613 (35) & 0.711 (50) & 0.803 (77) \\ \bottomrule
     \end{tabular}
     \caption{Four-level MGRIT convergence rates (iterations) for the 1D linear advection equation with level-dependent weights.}
\label{tab:varyweight_LA}
\end{table}

\subsubsection{Varying $\delta_t$ experiment}
\label{sec:vary_dt_LA}
Lastly, similar to the heat equation, we explore the question of why weighted relaxation offers a significantly
larger benefit for multilevel MGRIT than for two-level MGRIT (compare Tables 
\ref{tab:LA Conv and Iter for Multi level} and \ref{tab:LA Conv and Iter for Twolevel}).  Thus, we explore 
whether increasing the $\delta_t$ value has a discernible impact on MGRIT convergence.  Table 
\ref{tab:LA_vary_dt} depicts the two-level MGRIT convergence rate for various fine-grid $\delta_t$ values
that mimic the $\delta_t$ values encountered with $m=2$ on coarse MGRIT levels, when a final time of $1.0$ is used and $N_t = 4097$ (i.e., the largest problem in Tables \ref{tab:LA Conv and Iter for Twolevel} and \ref{tab:LA Conv and Iter for Multi level}).  The value
$N_t$ also adapts with $\delta_t$ so that the final time remains unchanged, similar to coarse MGRIT levels, e.g., when $\delta_t$ is 
multiplied by 16 in Table \ref{tab:LA_vary_dt}, $N_t$ decreases by a factor or 16 from 4097 to 257.  The 
table shows that only a weak potential dependence exists between $\delta_t$ and MGRIT convergence, with a slight
improvement in convergence rate as $\delta_t$ increases, but no decrease in iterations.  This leads us to 
believe that a more complicated multilevel interaction is driving the improved benefit of weighted-relaxation 
in the multilevel case.

\begin{table}[h!]
\centering
\begin{tabular}{r | c|c|c|c|c}
     $\delta_t$         & $2.44e^{-4}$ & $2 \cdot 2.44e^{-4}$ & $4 \cdot 2.44e^{-4}$ & $8 \cdot 2.44e^{-4}$ & $16 \cdot 2.44e^{-4}$ \\  \toprule        
     Iterations         & 14    & 14    & 14    & 14    &  14    \\
     Convergence Rate   & 0.285 & 0.284 & 0.282 & 0.280 & 0.274
\end{tabular}
\caption{One-dimensional linear advection equation and two-level MGRIT with $\omega_{C}=1.8$ and $m=2$ for various fine-grid $\delta_t$ values.}
\label{tab:LA_vary_dt}
\end{table}

\subsection{One-dimensional advection equation with grid-dependent dissipation}
\label{sec:1dadv_complex}
The final one-dimensional model problem considered is the one-dimensional advection equation with grid-dependent 
dissipation, which yields complex spatial eigenvalues.  For initial condition $u_0(x)$ and periodic spatial
boundary condition, we have
\begin{align}
\begin{split}
&\frac{\partial u}{\partial t} - \alpha \frac{\partial u}{\partial x} - \epsilon h_x \frac{\partial^2 u}{\partial x^2} = 0, \\
&\alpha > 0, \hspace{10pt} \epsilon > 0, \hspace{10pt} x \in \Omega = [0, L], \hspace{10pt} t \in [0, T], \\
&u(x, 0) = u_0(x), \hspace{10pt} x \in \Omega, \\
&u(0, t) = u(L, t), \hspace{10pt} t \in [0, T] .
\end{split}
\end{align}
By applying standard central differencing for discretizing the spatial derivatives, we obtain the classic first-order upwind difference scheme with $\epsilon = 0.5$. Next, using backward Euler for discretizing the temporal derivative results in
\begin{equation}
\mathbf{u}_j = (I - \delta_t G)^{-1} \mathbf{u}_{j-1}, \hspace{10pt} j=1,2,...,N_t,
\end{equation}
where the linear operator G from (\ref{ODE}) is the two-point upwinding stencil $\frac{\alpha}{h_x} [-1, 1, 0]$. The eigenvalues of G are then computed from the combination of the previously described eigenvalues for the heat equation and linear advection equations (see Sections \ref{sec:results_heat} and \ref{sec:results_adv_imag}, respectively),
 yielding 
 $$\kappa_{\gamma} = \frac{i}{h_x} \sin \left(\frac{2 \pi \gamma}{N_x}\right) - \frac{4 \epsilon}{h_x} \sin^2 \left( \frac{\gamma \pi}{2(N_x + 1)} \right),$$
 for $\gamma = 1,2,...,N_x$.
These values for $\kappa_{\gamma}$ allow for the computation of the theoretical convergence estimate 
(\ref{sharper bd}).  

The same function, domains, and boundary conditions are used as in equations \eqref{eq:LA_eqn1} and 
\eqref{eq:LA_eqn2}.  Likewise, the same MGRIT residual norm tolerance, convergence rate measurements, and maximum iterations are used as in 
Section \ref{sec:results_adv_imag}. The combination of grid points in space $N_x$ and time $N_t$ are chosen 
so that $\frac{\delta_t}{h_x} = 1.0$.

\subsubsection{Weighted FCF- and FCFCF-relaxation}
We again start by considering the two-level method for weighted FCF- and FCFCF-relaxation.  The search
for the experimentally optimal pair of weights for FCFCF-relaxation and $m=2$ is depicted in Figure 
\ref{fig:LA Iter WDiss}, where $(\omega_C, \omega_{CC}) = (2.4, 1.0)$ is the point corresponding to the minimal
convergence rate.  The search space of weights is the same as that for Section 
\ref{sec:results_adv_imag}, $0 \le \omega_C, \omega_{CC} \le 3$,  because 
a more expansive preliminary search indicated this was a reasonable range.

A similar study was done in the thesis \cite{Su2019_v2} for FCF-relaxation 
and found that $\omega_C = 1.9$ is the point where the minimal convergence rate is reached.

Table \ref{tab: AdvcU FCFCF Two Level} depicts the convergence rate and iterations for the two-level case.  The 
experimentally optimal pair of weights found in Figure \ref{fig:LA Iter WDiss} for FCFCF-relaxation $(\omega_C, \omega_{CC}) = (2.4, 1.0)$
is in bold, and this choice leads
to saving 1 iteration, or 11\%, over unitary weights and FCFCF-relaxation on the largest problem.  
The best weight choice for FCF-relaxation 
of $\omega_C = 1.9$ yields only a marginal improvement in convergence and no reduction in iterations when compared to a unitary 
weight and FCF-relaxation on the largest problem.  At the bottom of the table, we examine whether the experimentally optimal weights carry
over to $m=4$ and find that they do not, e.g., $(\omega_C, \omega_{CC}) = (2.4, 1.0)$ is slightly out-performed by
$(\omega_C, \omega_{CC}) = (2.2, 0.5)$.  Additionally, the experimentally best weight for FCF-relaxation and $m=4$ was found to
be 1.7 (not 1.9).   

Table \ref{tab: AdvcU FCFCF Multi Level} repeats these experiments for a full multilevel method.  We see that the 
best two-level choice for FCFCF-relaxation of $(\omega_C, \omega_{CC}) = (2.4, 1.0)$ fails to provide a benefit for larger problems.  Thus,
we carry out another search for FCFCF-relaxation and find that the weights $(\omega_C, \omega_{CC}) = (2.2, 0.5)$ yield the fastest multilevel
convergence when $m=2$, saving 9 iterations, or 22\%, when compared to unitary weights and FCFCF-relaxation on the 
largest problem.  A search in the weight-space for FCF-relaxation yielded the best convergence rate when 
$\omega_C = 1.6$, saving 14 iterations or 21\%, over a unitary weight choice on the largest problem.  At the bottom of the table, we show that the best weight choices for $m=2$ do not carry over
to $m=4$.  We depict the results for an experimentally best weight of 
1.4 for FCF-relaxation in order to show that, curiously, MGRIT with FCF-relaxation performs better for $m=4$ than for $m=2$.

We again note that linear advection is traditionally difficult for MGRIT, so we view this improved convergence 
when using experimentally optimal weights to be an important step. 

\begin{figure}[h!]
    \centering
    \begin{subfigure}[b]{0.4\textwidth}
    \includegraphics[width=\textwidth]{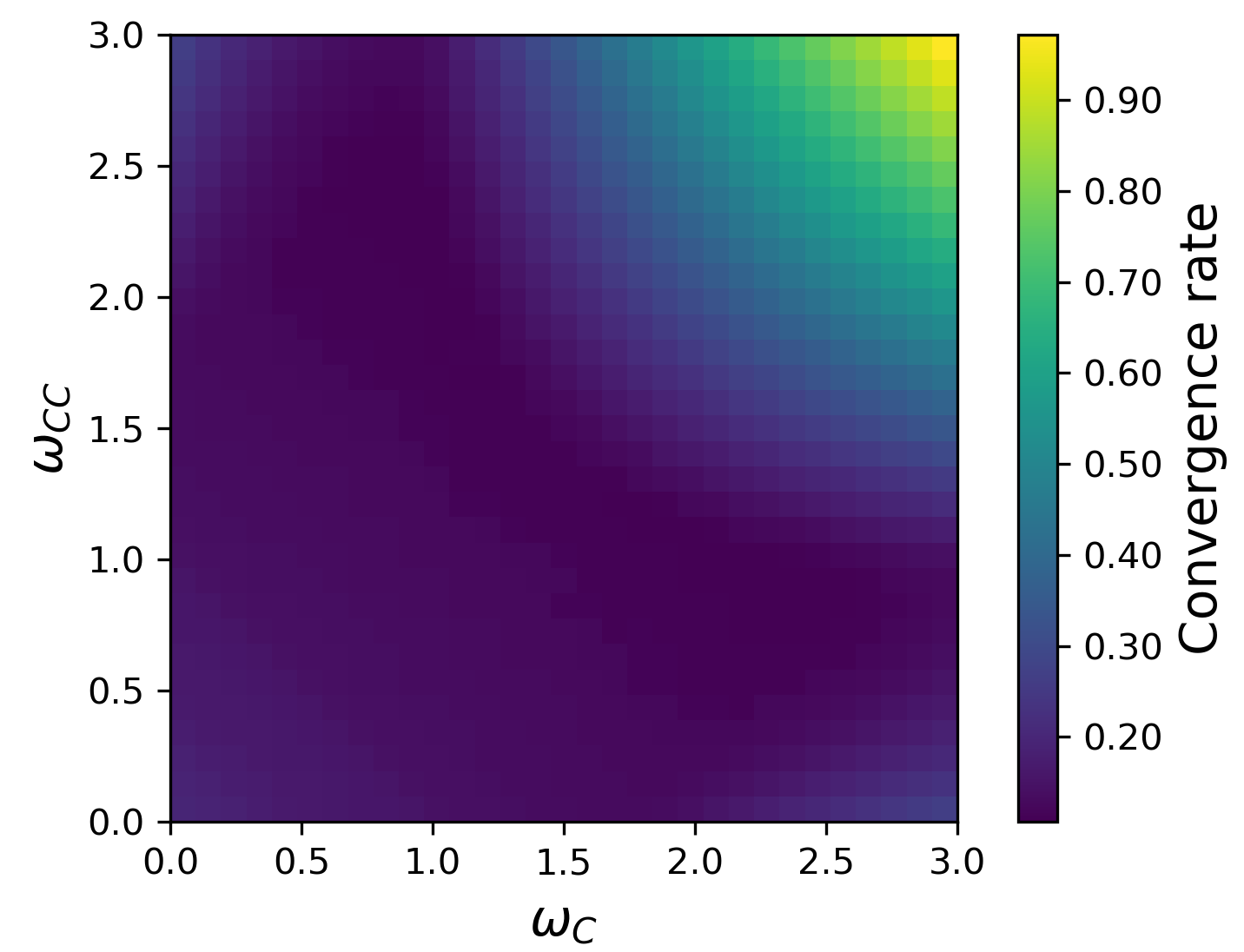}
    \caption{\normalsize Convergence Rate}
    \label{fig:LA Conv WDiss}
    \end{subfigure}
     \begin{subfigure}[b]{0.4\textwidth}
    \includegraphics[width=\textwidth]{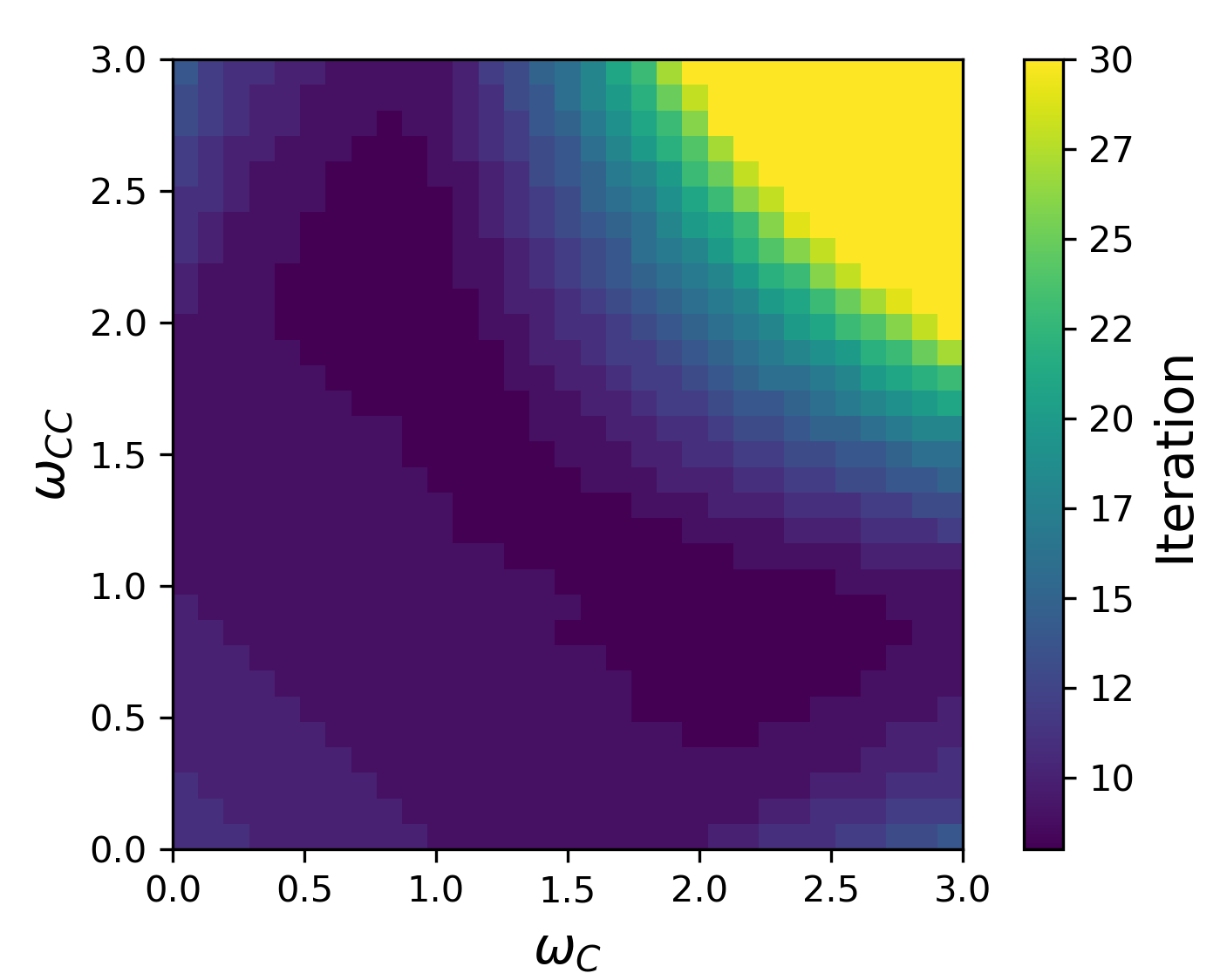}
    \caption{\normalsize Iterations}
    \label{fig:LA Iter WDiss}
    \end{subfigure}
    \caption{Two-level MGRIT experimental convergence rates (left) and iteration counts (right) using FCFCF-relaxation and various 
    relaxation weights $\omega_C$ and $\omega_{CC}$ for the 1D linear advection equation with dissipation, coarsening factor $m=2$, and grid size $(N_x, N_t) = (1025, 1025)$. }
\end{figure}

\begin{table}[h!]
\centering
\begin{tabular}{c r|c|c|c|c}
     
     & $N_x \times N_t$                                           & $513 \times 513$ & $1025 \times 1025$ & $2049 \times 2049$ & $4097 \times 4097$ \\ \toprule
     \multirow{2}{*}{$m=2$} & $\omega_C =1.0$                     & 0.147 (9) & 0.150 (9) & 0.151 (9) & 0.151 (9) \\ 
     & $1.9$                                                      & 0.140 (9) & 0.141 (9) & 0.142 (9) & 0.142 (9) \\ \midrule
     \multirow{3}{*}{$m=2$} & $(\omega_C,\omega_{CC})=(1.0, 1.0)$ & 0.133 (9) & 0.134 (9) & 0.135 (9) & 0.136 (9) \\ 
     & $(2.2, 0.5)$                                               & 0.115 (8) & 0.117 (8) & 0.117 (8) & 0.118 (8) \\ 
     & $\textbf{(2.4, 1.0)}$                                      & 0.114 (8) & 0.115 (8) & 0.116 (8) & 0.116 (8) \\  \midrule
     \multirow{2}{*}{$m=4$} & $\omega_C =1.0$                     & 0.366 (17) & 0.339 (18) & 0.332 (18) & 0.394 (18) \\ 
     & $1.7$                                                      & 0.343 (16) & 0.352 (16) & 0.363 (17) & 0.366 (17) \\  \midrule
     \multirow{4}{*}{$m=4$} & $(\omega_C,\omega_{CC})=(1.0, 1.0)$ & 0.304 (14) & 0.329 (15) & 0.346 (16) & 0.349 (16) \\ 
     & $(1.7, 1.0)$                                               & 0.273 (13) & 0.304 (14) & 0.323 (15) & 0.326 (15) \\ 
     & $(2.2, 0.5)$                                               & 0.314 (15) & 0.323 (15) & 0.330 (15) & 0.338 (16) \\ 
     & $(2.4, 1.0)$                                               & 0.328 (15) & 0.334 (16) & 0.337 (16) & 0.338 (16) \\ \bottomrule
\end{tabular}
\caption{Two-level MGRIT convergence rates (iterations) for the 1D advection equation with dissipation and weighted FCF- and FCFCF-relaxation.}
\label{tab: AdvcU FCFCF Two Level}
\end{table}

\begin{table}[h!]
\centering
\begin{tabular}{c r|c|c|c|c}
    
     & $N_x \times N_t$ & $513 \times 513$ & $1025 \times 1025$ & $2049 \times 2049$ & $4097 \times 4097$ \\   \toprule
     \multirow{2}{*}{$m=2$} & $\omega_C =1.0$                     & 0.438 (21) & 0.560 (30) & 0.667 (43) & 0.772 (66) \\ 
     & $1.6$                                                      & 0.388 (18) & 0.488 (23) & 0.613 (35) & 0.719 (52) \\ \midrule
     \multirow{4}{*}{$m=2$} & $(\omega_C,\omega_{CC})=(1.0, 1.0)$ & 0.344 (16) & 0.432 (21) & 0.559 (29) & 0.660 (41) \\ 
     & $(1.6, 1.0)$                                               & 0.293 (14) & 0.412 (20) & 0.520 (26) & 0.638 (38) \\ 
     & $\textbf{(2.2, 0.5)}$                                      & 0.295 (14) & 0.363 (17) & 0.482 (24) & 0.585 (32) \\ 
     & $(2.4, 1.0)$                                               & 0.388 (19) & 0.564 (32) & 0.725 (53) & 0.834 (94) \\  \midrule
      \multirow{2}{*}{$m=4$} & $\omega_C =1.0$                    & 0.428 (20) & 0.549 (28) & 0.657 (40) & 0.746 (57) \\ 
     & $1.4$                                                      & 0.375 (18) & 0.496 (24) & 0.607 (34) & 0.694 (46) \\  \midrule
     \multirow{4}{*}{$m=4$} & $(\omega_C,\omega_{CC})=(1.0, 1.0)$ & 0.336 (16) & 0.449 (21) & 0.562 (29) & 0.677 (43) \\ 
     & $(1.4, 1.0)$                                               & 0.301 (14) & 0.416 (20) & 0.542 (28) & 0.653 (39) \\ 
     & $(2.2, 0.5)$                                               & 0.454 (22) & 0.582 (31) & 0.682 (44) & 0.712 (49) \\ 
     & $(2.4, 1.0)$                                               & 0.404 (19) & 0.559 (30) & 0.672 (42) & 0.689 (45) \\  \bottomrule
\end{tabular}
\caption{Multilevel MGRIT convergence rates (iterations) for the 1D linear advection equation with dissipation and weighted FCF- and FCFCF-relaxation.}
\label{tab: AdvcU FCFCF Multi Level}
\end{table}

\begin{remark}
To avoid repetition, we omit our experiments for level-dependent weights and for varying $\delta_t$, 
because the results are similar to that seen in Sections \ref{sec:ml_w_LA} and \ref{sec:vary_dt_LA} for the linear
advection equation with purely imaginary spatial eigenvalues.  That is, optimized level-dependent weights saved 2 iterations, or 7\%,
in the four-level setting and FCF-relaxation, and little MGRIT dependence on the size of $\delta_t$ was found.
\end{remark}

\end{document}